\documentclass[11pt]{article}
\usepackage[utf8]{inputenc}
\usepackage{amssymb}
\usepackage{amsmath,bbm,mathabx,cases,booktabs,amsthm}
\usepackage{enumitem}   

\usepackage{mathrsfs}
\usepackage{caption,subcaption}
\usepackage{graphicx,multirow}
\usepackage{authblk}
\usepackage{comment}
\usepackage[margin=1.3in]{geometry}

\usepackage{titlesec}
\titleformat{\subsubsection}[runin]
       {\normalfont\bfseries}
       {\thesubsubsection}
       {0.5em}
       {}
       [.]


\newtheorem{theorem}{Theorem}[section]

\newtheorem{proposition}{Proposition}[section]
\theoremstyle{definition}

\theoremstyle{remark}
\newtheorem{remark}[theorem]{Remark}
\numberwithin{equation}{section}

\DeclareMathOperator*{\argmin}{arg\,min}

\newcommand{\sgn}{\text{sgn}}
\newcommand{\subscript}[2]{$#1 _ #2$}

\begin{document}

\date{}
\title{\vspace{-1em}Covariance models and Gaussian process regression for the wave equation. Application to related inverse problems}

\author[a]{I. Henderson \thanks{Corresponding author. E-mail address: \texttt{henderso@insa-toulouse.fr}}}
\author[a]{P. Noble}
\author[a]{O. Roustant}

\affil[a]{Institut de Mathématiques de Toulouse; UMR 5219\\ 
Université de Toulouse; CNRS \\
INSA, F-31077 Toulouse, France}


\maketitle

\begin{abstract}
In this article, we consider the general task of performing Gaussian process regression (GPR) on pointwise observations of solutions of the 3 dimensional homogeneous free space wave equation.
In a recent article, we obtained promising covariance expressions tailored to this equation: we now explore the potential applications of these formulas.
We first study the particular cases of stationarity and radial symmetry, for which significant simplifications arise. 
We next show that the true-angle multilateration method for point source localization, as used in GPS systems, is naturally recovered by our GPR formulas in the limit of the small source radius. Additionally, we show that this GPR framework provides a new answer to the ill-posed inverse problem of reconstructing initial conditions for the wave equation from a limited number of sensors, and simultaneously enables the inference of physical parameters from these data. We finish by illustrating this ``physics informed'' GPR on a number of practical examples.
\end{abstract}

{\footnotesize Keywwords: wave equation, covariance models, Gaussian processes, Gaussian process regression, physical parameter estimation, initial condition reconstruction.}

\section{Introduction}
Machine learning techniques have proved time and again that they can provide efficient solutions to difficult problems in the presence of field data. A key element to this success is the incorporation of ``expert knowledge" in the corresponding statistical models. In many practical applications, this knowledge takes the form of mathematical models which are sometimes already well understood. This is e.g. common when dealing with problems arising from physics, in which case the mathematical models often take the form of Partial Differential Equations (PDEs), such as the wave equation at hand in this article. Because of the broadness of the applications PDEs offer, large efforts have been devoted to studying and solving them, both theoretically \cite{evans1998} and numerically \cite{grossmann2007numerical}. These equations impose very specific (yet often simple) structures on the observed data which can be very difficult to capture or mimic with general machine learning models.

In this article, we will focus on the linear 3 dimensional homogeneous free space wave equation. This equation is the prototype for describing simple 3D phenomena which propagate at finite speed; although particularly simple in the landscape of PDEs, it is in fact central for many applications emerging from different fields such as acoustics or electromagnetics. The homogeneity assumption is also commonly encountered in physics, when modelling conservation laws.
Given that the main structures of the solutions of this PDE are well known, one may thus attempt to incorporate them in the machine learning models that work with such solutions. 

The class of models we will deal with is that of Gaussian Process Regression (GPR), which is a Bayesian framework for function regression and interpolation \cite{gpml2006}. It is especially adapted to performing inference in the presence of limited/scattered data, say measurements from a small number of scattered sensors. It is also a ``kernel method", meaning that it is built upon a positive semidefinite function, the kernel in question. In the language of Bayesian inference, GPR puts a \textit{prior} probability distribution on a suitable function space in which the unknown function $u$ is assumed to lie. This prior is then conditioned on available field data involving $u$ thanks to Bayes' law, which in turn provides a \textit{posterior} probability distribution from which statistical estimators related to $u$ can be computed. The posterior expectation in particular plays the role of an approximant of $u$ while the posterior covariance provides posterior error bounds. In the case of GPR, these prior and posterior distributions are in many ways generalizations to infinite dimensions of the multivariate normal distribution, and are fully specified by a mean and covariance functions. 
These priors are naturally obtained by modelling $u$ as a {sample path} of a {Gaussian process} and we will thus say that we put a Gaussian process (GP) prior over $u$. Imposing strict {linear} constraints on a GP prior as well as on the posterior expectation it provides is straightforward in principle; we will apply this observation to the case where the linear constraint is the homogeneous wave equation itself, as in \cite{hnr_bernoulli}.

Thus, we will first be concerned with building GP priors which incorporate \textit{beforehand} the knowledge that the sought function is in fact a solution to the wave equation, thus drastically lowering the dimension of the function space upon which the prior is set. In practice, the main consequence will be that all the possible estimators of $u$ provided by GPR will also be solutions to the same wave equation. Nevertheless, from a random field perspective, it is remarkable that this property will in fact also hold at the level of the sample paths of the GP, when the PDE is understood in the distributional sense (\cite{hnr_bernoulli}, Proposition 4.1). Those covariance formulas are particular cases of general ones first described in \cite{hnr_bernoulli}, which take the form of multidimensional convolutions against the PDE's Green's function. They were derived by putting generic Gaussian process priors over the initial conditions of the wave equation and propagating them through the solution map of the said equation, leading to ``wave equation-tailored" covariance functions. Though interesting for theoretical purposes, these convolutions are very expensive to evaluate numerically, which constitutes a limitation for their use in GPR. In this article, we explore the particular cases where the initial condition priors are either stationary (Proposition \ref{prop: F_t * F_tp}) or radially symmetric (Proposition \ref{prop: radial sym kernel}), as then notable simplifications can be obtained. We then study the case of point sources, for which we show that the task of recovering the position of the point source using multilateration (as e.g. in GPS systems, see \cite{trilateration_fang}) is unexpectedly recovered by maximizing the likelihood attached to the GPR models we previously obtained for the wave equation, in the limit of the small source radius (Figure \ref{fig: point source}). 
We will also discuss applications in physical parameter estimation and initial condition reconstruction. Recovering the initial position in particular is the purpose of photoacoustic tomography (PAT, \cite{ammari2012}, Chapter 3), an exercise for which we will provide a simple proof of concept application, in the presence of radial symmetry.

\paragraph{Related literature}
The idea of solving and ``learning" linear ODEs and PDEs thanks to GPR probably goes back to \cite{graepel2003} and has been re-explored ever since. A large part of the subsequent works inspired by \cite{graepel2003} deal with PDEs of the form $L(u) = f$ 
where $f$ is a partially known \textit{interior} source term: that is, $f$ and $u$ have the same input space. We will not be interested in this case as we will impose the strict condition that $f\equiv 0$, as is e.g. the case in PAT. In our case, the initial conditions will instead play the role of the source terms. For dealing with interior source terms, see \cite{sarkka2011,alvarez2009,alvarez2013,Srkk2019GaussianPL,LpezLopera2021PhysicallyInspiredGP,raissi2017,raissi2018numericalGP} and \cite{alvarado2014,alvarado2016} for subsequent applications to inhomogeneous wave equations. 
See also \cite{CHEN_owhadi_2021} for an alternative method applicable to nonlinear PDEs.
Compared to these approaches, ensuring (deterministically) the homogeneity constraint $f=0$ in the wave equation will allow us to drastically reduce the dimensionality of the problem of approximating $u$ given scattered measurements of $u$.

Ensuring homogeneous PDE constraints on centered GPs is done by appropriately constraining its covariance kernel (\cite{hnr_bernoulli}, Proposition 3.5).
Such PDE constrained kernels have been explicitly built for a number of classical PDEs, namely: divergence-free vector fields \cite{Narcowich1994GeneralizedHI,scheuerer2012}, curl-free vector fields \cite{fuselier2007refined,scheuerer2012,wahlstrom2013,jidling2017}, the Laplace equation \cite{Schaback2009SolvingTL,mendes2012,albert2020}, Maxwell's equations \cite{hegerman2018}, the heat equation in 1D \cite{albert2020} and 2D \cite{ginsbourger2016}, Helmholtz' 2D equations \cite{albert2020}, and linear solid mechanics \cite{Jidling2018ProbabilisticMA}. See also \cite{vergara2022general} where generic PDE-constrained kernels are built under stationarity assumptions. For further discussions and references on PDE constrained random fields, we refer to \cite{hnr_bernoulli}, Section 1. This article is the continuation of a previous work \cite{hnr_bernoulli}, where we described a covariance kernel tailored to the wave equation at hand in this article. In parallel with homoegeneous PDEs,
\cite{hegermann2021LinearlyCG,gulian2022,pmlr-v89-solin19a} enforce homogeneous \textit{boundary conditions} on the covariance kernel. We finish by mentioning that fine properties of a stochastic three dimensional wave equation are studied in \cite{dalang_2009}. The wave equation in \cite{dalang_2009} is not homogeneous, and because of the nonlinearity they consider, a precise investigation of the covariance function of the solution process is not considered.

The approach presented in this article falls in the field of Bayesian methods for solving PDE related inverse problems, the literature of which is extensive; see \cite{stuart_2010,dashti2013map,dashti_stuart_pde,Dashti2017} and the many references therein. However, the method we adopt here differs from the standard Bayesian inversion methods aforementioned in that we incorporate the PDE constraint \textit{beforehand}, i.e. directly in the prior; the PDE does not only appear in the likelihood. See \cite{owhadi_bayes_homog} for a point of view similar with that of the present article, which uses PDE-tailored GP priors for building optimal finite dimensional approximations of solution spaces of elliptic PDEs. 

The inverse problems we will study deal with approximating the initial conditions of \eqref{eq:wave_eq} as well as the related physical parameters (wave speed, source location and source size), given scattered measurements of the solution $u$. 
A general methodology for estimating the parameters of a linear PDE using GPR is described in \cite{raissi2017}, using the forward differential operator. Here we will rather use its inverse, i.e. the Green's function.
The task of approximating the initial position in particular is the purpose of photoacoustic tomography (PAT), which is a technique commonly used e.g. in biomedical imaging \cite{ammari2012}. See e.g. \cite{Kuchment2015, anastasio_2007} for details on the standard mathematical techniques and models used in PAT. Note that the solution is often assumed available on a surface enclosing the source \cite{xu2005universal}, in order to use Radon transforms or similar inversion formulas. 
Our method instead allows the sensors to be arbitrarily scattered. As the corresponding PAT problem becomes ill-posed, we do not aim for a full reconstruction of the initial conditions. Instead, we show that our method amounts to computing an orthogonal projection of the solution over a well-chosen finite dimensional space. Of course, the geometry of the sensor locations plays a crucial role in the accuracy of our model, but the reconstruction formula we introduce remains nonetheless independent of any underlying geometry assumptions. In the two dimensional setting, it is worth noting that \cite{Purisha_2019} already showed that a GPR methodology based on Radon transforms could be set up for solving x-ray tomography problems in the presence of limited (scattered) data.  

\paragraph{Organization of the paper} For self-containment, section 2 is dedicated to reminders on (Gaussian) random fields and GPR. Section 3 is dedicated to the study of GP priors tailored to the wave equation. In section 4, we showcase some numerical applications of the previous section on wave equation data. We conclude in section 5. For the sake of readability, all the proofs as well as technical definitions concerning convolutions and tensor products are gathered in the appendix.

\paragraph{Notations}
Let $\mathcal{D}$ be a set, $m: \mathcal{D} \rightarrow \mathbb{R}$ and $k: \mathcal{D} \times \mathcal{D} \rightarrow \mathbb{R}$. Given $x\in \mathcal{D}$, $k_x$ denotes the function $y\mapsto k(x,y)$. If $X = (x_1,...,x_n)^T$ is a column vector in $\mathcal{D}^n$, we denote $m(X)$ the column vector such that $m(X)_i = m(x_i)$, $k(X,X)$ the square matrix such that $k(X,X)_{ij} = k(x_i,x_j)$ and given $x \in \mathcal{D}$, $k(X,x)$ the column vector such that $k(X,x)_i = k(x_i,x)$. 
The variables $(r,\theta,\phi), r\geq 0, \theta \in [0,\pi], \phi\in [0,2\pi]$, denote spherical coordinates and $S$ denotes the unit sphere of $\mathbb{R}^3$. We write $d\Omega = \sin \theta d\theta d\phi$ its surface differential element; $\gamma = (\sin \theta \cos \phi,\sin \theta \sin \phi,  \cos \theta)^T \in S$ denotes the unit length vector parametrized by $(\theta,\phi)$.

\section{Background on Gaussian process regression}
 
\subsection{Random fields, Gaussian processes, positive semidefinite functions}\label{sub: GP}
Let $\mathcal{D}$ be a set. A random field $(U(x))_{x \in \mathcal{D}}$ is a collection of random variables defined on the same probability space $(\Omega,\mathcal{F},\mathbb{P})$. It is second order if for all $x\in\mathcal{D},\ \mathbb{E}[U(x)^2] < +\infty.$ Its sample paths are the deterministic functions $x\mapsto U(x)(\omega)$, given $\omega\in\Omega$.
$(U(x))_{x \in \mathcal{D}}$ is a GP if for all $(x_1,...,x_n) \in \mathcal{D}^n$, the law of $(U({x_1}),...,U({x_n}))^T$ is a multivariate normal distribution.
The law of a GP is characterized by its mean and covariance functions (\cite{janson_1997}, Section 8), defined by  $m(x):= \mathbb{E}[U(x)]$ and $k(x,x') = \text{Cov}(U(x),U({x'})) = \mathbb{E}[U(x)U({x'})] - m(x)m(x')$, and we write $(U(x))_{x \in \mathcal{D}} \sim GP(m,k)$. Given $\omega\in\Omega$, the associated sample path is the deterministic function $U_{\omega}: x \mapsto U(x)(\omega)$.
The mean function can be chosen arbitrarily, but the covariance function has to be symmetric and positive semidefinite, which means that for all $(x_1,...,x_n) \in \mathcal{D}^n$, the matrix $(k(x_i,x_j))_{1\leq i,j\leq n}$ is symmetric non negative definite (\cite{gpml2006}, Section 4.1). In the rest of the paper, positive semidefinite functions will implicitly be assumed symmetric.
The mathematical properties of the GP are encoded in the function $k$. Furthermore, there is a bijection between positive semidefinite functions and covariance functions of centered GPs (\cite{janson_1997}, Theorem 8.2). We will thus focus on the design of positive semidefinite kernels.
A covariance kernels is \textit{stationary} if $k(x,x')$ only depends on the increment $x-x'$: $k(x,x') = k_S(x-x')$ for some function $k_S$. Common examples of stationary kernels are the squared exponential and Matérn kernels \cite{gpml2006}; see equation \eqref{eq:matern 5/2}. Informally, if the covariance function of a GP is stationary, then its sample paths ``look similar at all locations'' (\cite{gpml2006}, p.4).

\subsection{Gaussian process regression \cite{gpml2006}}\label{sub: GPR}
\subsubsection{Kriging equations} 
GPs can be used for function interpolation. Let $u$ be  a function defined on $\mathcal{D}$ for which we know a dataset of values $B = \{u(x_1),...,u(x_n)\}$. Conditioning the law of a GP $(U(x))_{x \in \mathcal{D}} \sim GP(m,k)$ on the data $B$ yields a second GP defined by
$ V(x):= (U(x) | U({x_i}) = u(x_i), i = 1,...,n)$.
Its mean and covariance functions $\tilde{m}$ and $\tilde{k}$ are given by the so-called \textit{Kriging} equations \eqref{eq:krig mean} and \eqref{eq:krig cov}. Note $X = (x_1,...,x_n)^T$ and assume that $K(X,X)$ is invertible, then \cite{gpml2006}
\begin{numcases}{}
    \Tilde{m}(x) &= \hspace{3mm}$m(x) + k(X,x)^Tk(X,X)^{-1}(u(X) - m(X)),$ \label{eq:krig mean} \\
    \Tilde{k}(x,x') &= \hspace{3mm}$k(x,x') - k(X,x)^Tk(X,X)^{-1}k(X,x').$\label{eq:krig cov}
\end{numcases}
The function $\Tilde{m}$ is an estimator of $u$ and for all $x$ in $\mathcal{D}$, $\tilde{m}(x)$ can be used for predicting the value $u(x)$. By construction, for all observation points $x_i$, we have $\Tilde{m}(x_i) = u(x_i)$ and $\Tilde{k}(x_i,x_i) = 0$. If observing noisy data ${U_i = U(x_i) + \varepsilon_i}$ with $(\varepsilon_1,...,\varepsilon_n)^T \sim \mathcal{N}(0,\sigma^2I_n)$ independent from $U$, one replaces $K(X,X)$ with $K(X,X) + \sigma^2 I$ in the Kriging equations and leaves the other terms $k(X,x)$ unchanged. This amounts to applying Tikhonov regularization on $k(X,X)$, which is also relevant for approximating equations \eqref{eq:krig mean} and \eqref{eq:krig cov}  when $k(X,X)$ is ill-conditioned.

\subsubsection{Tuning covariance kernels \cite{gpml2006}}\label{subsub:tuning_cov}
Covariance functions are usually chosen among a parametrized family of kernels $\{k_{\theta}, \theta \in \Theta \subset \mathbb{R}^q\}$. $\theta$ contains the \textit{hyperparameters} of $k_{\theta}$. One then attempts to find the value $\theta$ which fits best the observations $u_{obs} = (u_1,...,u_n)^T$, the set of observations of $u$ at locations $X = (x_1,...,x_n)$. This is performed by maximizing the \textit{marginal likelihood}, which is the probability density of the random vector $(U(x_1),...,U(x_n))^T$ at point $u_{obs}$, given $\theta$. Denote $p(u_{obs}|\theta)$ the associated marginal likelihood at $\theta$, one searches for $\hat{\theta}$ such that $\hat{\theta} = \text{argmax}_{\theta \in \Theta} p(u_{obs} | \theta)$.
Explicitly, assuming that $m \equiv 0$, then we have $(U(x_1),...,U(x_n))^T \sim \mathcal{N}(0,k_{\theta}(X,X))$ and
\begin{align}\label{eq:likelihood}
p(u_{obs} | \theta) = \frac{1}{(2\pi)^{n/2}\det k_{\theta}(X,X)^{1/2}}e^{-\frac{1}{2} u_{obs}^T k_{\theta}(X,X)^{-1}u_{obs}}.
\end{align}
Equivalently, for noisy observations with identical noise standard deviation $\sigma$, set 
\begin{align}
    \mathcal{L}(\theta,\sigma^2):&= -2\log p(u_{obs} | \theta) - n\log 2\pi \nonumber \\
    &= u_{\mathrm{obs}}^T (k_{\theta}(X,X)+ \sigma^2 I_n)^{-1}u_{\mathrm{obs}} + \log \det (k_{\theta}(X,X) + \sigma^2 I_n).
\end{align}
We call $\mathcal{L}(\theta,\sigma^2)$ the negative log marginal likelihood, and one may rather attempt to find $\hat{\theta}$ such that $\hat{\theta} = \argmin_{\theta \in \Theta} \mathcal{L}(\theta,\sigma^2)$.
Note that $\sigma$ can also be interpreted as a hyperparameter and estimated through negative log marginal likelihood minimization.
\subsubsection{The RKHS point of view}\label{subsub:rkhs}
The Kriging equations \eqref{eq:krig mean} and \eqref{eq:krig cov} can alternatively be viewed as orthogonal projections of $u$ in a suitable Hilbert space. 
Given a positive semidefinite kernel $k$ defined on a set $\mathcal{D}$, one may build a Reproducing Kernel Hilbert Space (RKHS) of functions defined on $\mathcal{D}$, which we denote by $\mathcal{H}_k$ (\cite{agnan2004}, Theorem 3). The inner product of $\mathcal{H}_k$ verifies the reproducing property \cite{wendland2004scattered}: $\langle k(x,\cdot),k(x',\cdot) \rangle_{\mathcal{H}_k} = k(x,x')$. One may then formulate the following regularized interpolation problem \cite{fasshauer2007,wendland2004scattered}
\begin{align}\label{eq:rkhs pov pb}
    \inf_{v \in \mathcal{H}_k}||v||_{\mathcal{H}_k} \hspace{5mm} \text{s.t.} \hspace{5mm} v(x_i) = u(x_i) \ \ \ \forall i \in \{1,...,n\}.
\end{align}
Then $\tilde{m}$ in equation \eqref{eq:krig mean} is the unique solution of \eqref{eq:rkhs pov pb}. One can also show \cite{wendland2004scattered} that equation \eqref{eq:krig mean} amounts to $\Tilde{m} = m + p_F(u-m)$,
where $p_F$ stands for the orthogonal projection operator on $F:= \text{Span}(k(x_1,\cdot),...,k(x_n,\cdot))$  with reference to the inner product of $\mathcal{H}_k$. If in particular $m\equiv 0$, then $\Tilde{m} = p_F(u)$. Likewise, equation \eqref{eq:krig cov} amounts to $\Tilde{k}(x,\cdot) = P_{F^\perp}(k(x,\cdot))$. 
Viewing the Kriging mean as an orthogonal projection over a finite dimensional deterministic space is reminiscent of Fourier series or Galerkin reconstruction approaches.

\section{Gaussian process priors for the 3D wave equation}\label{section:GP_wave}

\subsection{General solution to the wave equation}
Denote the 3D Laplace operator $\Delta = \partial_{xx}^2 + \partial_{yy}^2 + \partial_{zz}^2$ and the d'Alembert operator with the box symbol, $\Box = c^{-2} \partial_{tt}^2 - \Delta$ with constant wave speed $c > 0$.
Consider then the following initial value problem in the free space $\mathbb{R}^3$
\begin{align}\label{eq:wave_eq}
\begin{cases}
    \hfil \Box w &= 0 \hspace{120pt} \forall (x,t) \in \mathbb{R}^3 \times \mathbb{R}_+^*,  \\
    \hfil w(x,0) &= u_0(x), \hspace{6pt} \partial_t w(x,0) = v_0(x) \hspace{15pt} \forall x \in \mathbb{R}^3.
\end{cases}
\end{align}
The solution of this problem is unique in the distributional sense (\cite{Duistermaat2010}, p. 164). It can be extended to all $t\in \mathbb{R}$ and is represented as follow (\cite{Duistermaat2010}, p. 295)
\begin{align}\label{eq sol wave}
    w(x,t) = (F_t * v_0)(x) + (\Dot{F}_t * u_0)(x), \hspace{10mm} \forall (x,t) \in \mathbb{R}^3 \times \mathbb{R}.
\end{align}
$(F_t)_{t\in\mathbb{R}}$ is the Green's function of the wave equation (\cite{Duffy2015GreensFW}, p. 202). For fixed $t$, $F_t$ is a singular measure, meaning that it has no density with reference to the Lebesgue measure. $\Dot{F}_t$ is $F_t$'s ``time derivative" (formally, $\dot{F}_t = \partial_tF_t$, \cite{Duistermaat2010}, equation (18.16) p. 297), understood as a continuous linear form over $C^1(\mathbb{R}^3)$. Details on the definition of the convolution $F_t*v_0$ are given in Section \ref{sub:conv_measure}, while $\dot{F}_t*u_0$ is effectively computed as $\dot{F}_t*u_0=\partial_t(F_t*v_0)$. Explicitly, $F_t$ and $\dot{F}_t$ are defined by
\begin{align}\label{eq:ft ftp in 3D}
    F_t = \frac{\sigma_{c|t|}}{4\pi c^2 t}, \hspace{8mm} \text{and} \hspace{8mm} \forall \varphi\in C^1(\mathbb{R}^3),\  \ \langle\Dot{F}_t,\varphi\rangle = \partial_t\bigg( \int_{\mathbb{R}^3} \varphi(x)F_t(dx)\bigg),
\end{align}
where $\sigma_R$ is the surface measure of the sphere of center $0$ and radius $R$, and $\langle \cdot,\cdot\rangle$ is the duality bracket between $C^1(\mathbb{R}^3)$ and its dual.
If $u_0 \in \mathcal{C}^1(\mathbb{R}^3) $ and $v_0 \in \mathcal{C}^0(\mathbb{R}^3)$, then $w$ as defined in \eqref{eq sol wave} is a pointwise defined function and equation \eqref{eq sol wave} reduces to the Kirschoff formula (\cite{evans1998}, p. 72), which writes in spherical coordinates:
\begin{align}\label{eq:kirschoff}
w(x,t) = \int_{S}tv_0(x-c|t|\gamma) + u_0(x-c|t|\gamma) -  c|t|\gamma \cdot \nabla u_0(x-c|t|\gamma) \frac{d\Omega}{4\pi}.
\end{align}
 
\subsection{Gaussian process priors for the wave equation}
\subsubsection{General covariance structure}\label{subsub:gen_formula}
Suppose that the initial conditions $u_0$ and $v_0$ are realizations of two independent centered Gaussian processes, $U^0 \sim GP(0,k_{\mathrm{u}})$ and $V^0 \sim GP(0,k_{\mathrm{v}})$. That is, $u_0 = U_{\omega}^0$ and $v_0 = V_{\omega}^0$ for some $\omega\in\Omega$. This assumption is relevant e.g. when $u_0$ and $v_0$ are unknown, in which case $U^0$ and $V^0$ are interpreted as GP priors over $u_0$ and $v_0$. We will assume that the sample paths of $V^0$ are continuous and that of $U^0$ are continuously differentiable, in order to use the formula \eqref{eq:kirschoff} (see \cite{hnr_bernoulli}, Section 4.2 for more details and discussions on these assumptions).
By solving \eqref{eq:wave_eq}, one obtains a time-space random field $W(x,t)$ defined by 
\begin{align}\label{eq:W sol}
W(x,t): \Omega \ni \omega \longmapsto (F_t * V^0_{\omega})(x) +  (\Dot{F}_t * U^0_{\omega})(x).  
\end{align}
The next result, which describes the covariance function of $W$, is the starting point of this paper.
\begin{proposition}[\cite{hnr_bernoulli}, Proposition 4.1]\label{prop: wave kernel} Denote $z = (x,t)$ and $z' = (x',t')$ the space-time variables. Let $k_{\mathrm{u}}$ (resp. $k_{\mathrm{v}}$) be a positive semidefinite function such that the sample paths of the associated GP are continuously differentiable (resp. continuous). In particular, $k_{\mathrm{v}}\in C^0(\mathbb{R}^3\times\mathbb{R}^3)$ and $ k_{\mathrm{u}}(x,\, .), k_{\mathrm{u}}(.\,,x')\in C^{1}(\mathbb{R}^3)$ for all $x,x'\in\mathbb{R}^3$. Define then the two functions
\begin{align}
{k_{\mathrm{v}}^{\mathrm{wave}}(z,z') =[(F_t \otimes F_{t'}) * k_{\mathrm{v}}](x,x')},\label{eq: kv wave} \\
{k_{\mathrm{u}}^{\mathrm{wave}}(z,z') = [(\Dot{F}_t \otimes \Dot{F}_{t'}) * k_{\mathrm{u}}](x,x')}. \label{eq: ku wave}
\end{align}
$(i)$ Then $(W(z))_{z \in \mathbb{R}^3 \times \mathbb{R}}$ is a centered GP whose covariance kernel is given by
\begin{align}\label{eq:wave kernel}
k_{\mathrm{w}}(z,z') = k_{\mathrm{v}}^{\mathrm{wave}}(z,z') + k_{\mathrm{u}}^{\mathrm{wave}}(z,z').
\end{align}
(ii) Conversely, any centered second order random field with a.s. continuous sample paths and with covariance function $k_{W}$ has its sample paths solution of the wave equation \eqref{eq:wave_eq} for some $u_0$ and $v_0$, in the sense of distributions, almost surely.
\end{proposition}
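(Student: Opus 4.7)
The plan is to treat parts (i) and (ii) separately, using the Kirchhoff formula \eqref{eq:kirschoff} for the forward direction and a general PDE-to-covariance transfer principle for the converse. For part (i), the first task is to establish Gaussianity of the finite-dimensional marginals of $W$. Under the regularity assumptions, $(F_t * V^0)(x)$ is, up to prefactor, the spherical average of $V^0$ on the sphere of radius $c|t|$ centered at $x$, and $(\dot F_t * U^0)(x) = \partial_t(F_t * U^0)(x)$ is a similar average involving $U^0$ and its gradient; in both cases the result is a linear functional of the Gaussian prior, hence Gaussian, and joint Gaussianity of any vector $(W(z_1),\dots,W(z_n))$ follows. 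Centeredness is immediate by Fubini.

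For the covariance computation, expanding $\mathrm{Cov}(W(z),W(z'))$ and using the independence of $U^0$ and $V^0$ eliminates the cross terms, leaving two contributions. The first is $\mathbb{E}[(F_t * V^0)(x)(F_{t'} * V^0)(x')]$, which by a stochastic Fubini argument becomes the double spherical integral of $k_{\mathrm{v}}(x-c|t|\gamma,x'-c|t'|\gamma')$ against $F_t \otimes F_{t'}$, i.e.\ $[(F_t \otimes F_{t'}) * k_{\mathrm{v}}](x,x') = k_{\mathrm{v}}^{\mathrm{wave}}(z,z')$. The second contribution produces $k_{\mathrm{u}}^{\mathrm{wave}}(z,z')$ identically, once the time derivatives defining $\dot F_t$ and $\dot F_{t'}$ are brought out of the expectation (justified by dominated convergence under the assumed $C^1$ path regularity of $U^0$).

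For part (ii), my strategy is to appeal to the general transfer principle from \cite{hnr_bernoulli}, Proposition 3.5: if a centered second order field with a.s.\ continuous sample paths has a covariance satisfying a linear homogeneous PDE in each argument distributionally, then its sample paths solve the same PDE in distribution almost surely. It thus suffices to verify $\Box_z k_{\mathrm{w}}(z,z') = 0 = \Box_{z'} k_{\mathrm{w}}(z,z')$ as distributions. This is immediate from the construction: convolution against $k_{\mathrm{u}}$ or $k_{\mathrm{v}}$ is only in the spatial arguments and commutes with $\Box_z$; hence it remains to use that $F_t$ and $\dot F_t$ are Green's functions of the wave equation, so that $\Box F = 0$ distributionally away from $t = 0$. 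By symmetry the same holds in $z'$, and the initial conditions are recovered a.s.\ by taking the traces $w(\cdot,0)$ and $\partial_t w(\cdot,0)$ of the resulting sample path.

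The main obstacle will be the rigorous interchange of expectation with the singular spherical convolutions, and the pathwise meaning of $\dot F_t * U^0$ as a distribution acting on the $C^1$ sample paths of $U^0$. Since $F_t$ is concentrated on a sphere (having no density with respect to Lebesgue measure) and $\dot F_t$ is only defined via duality against $C^1$ test functions, the standard Fubini theorem does not apply verbatim; this is precisely the point of the extended framework for convolutions against singular measures developed in the appendix, which ensures that both $F_t * V^0$ and $\dot F_t * U^0$ are well-defined random variables and that the stochastic Fubini arguments go through under the stated regularity of $k_{\mathrm{u}}$ and $k_{\mathrm{v}}$.
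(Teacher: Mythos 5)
Your proposal is correct and follows essentially the same route as the paper: the paper defers the full proof to Proposition 4.1 of the cited earlier work, but the sketch it gives --- joint Gaussianity of the Kirchhoff-formula functionals, independence of $U^0$ and $V^0$ to kill the cross terms, and a Fubini interchange of $\mathbb{E}[\cdot]$ with the spherical integrals justified by the a.s.\ continuity (resp.\ continuous differentiability) of the sample paths --- is exactly your part (i), while your part (ii) invokes the same covariance-to-sample-path transfer principle (Proposition 3.5 of that reference) on which the paper relies. The only cosmetic imprecision is the phrase ``$\Box F=0$ distributionally away from $t=0$'': with the two-sided propagator $F_t$ used here (which is odd in $t$), $\Box F_t=0$ holds for all $t\in\mathbb{R}$, so no excision of $t=0$ is needed.
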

Equation \eqref{eq: kv wave} is to be understood in the sense of the appendix section \ref{sub:conv_measure}, while in practice, equation \eqref{eq: ku wave} can be computed as $[(\Dot{F}_t \otimes \Dot{F}_{t'})*k_{\mathrm{u}}](x,x') = \partial_t \partial_{t'} [(F_t \otimes F_{t'})*k_{\mathrm{u}}](x,x')$.
The proof of equation \eqref{eq:wave kernel} relies on Fubini's theorem, to permute $\mathbb{E}[\cdot]$ and integrals over the sphere $S$ (see equation \eqref{eq:kirschoff}). To apply Fubini's theorem, one needs the maps $(x,\omega) \mapsto V(x)(\omega)$ and $(x,\omega) \mapsto \partial_{x_i}U(x)(\omega), i \in \{1,2,3\}$ to be measurable. In our case this property holds, up to a modification, because the random fields $V$ and $\partial_{x_i}U(x)$ are assumed a.s. continuous (see \cite{hnr_bernoulli}, Section 2.1.2 for further discussions). Complete expressions of equations \eqref{eq: kv wave} and \eqref{eq: ku wave} in terms of integrals of $k_{\mathrm{u}}$, its first derivatives and $k_{\mathrm{v}}$ over the unit sphere can be found in \cite{hnr_bernoulli}, p. 23. They are derived from the Kirschoff formula \eqref{eq:kirschoff}.

\begin{remark}\label{rk:drop_gp}
A more general result holds if one drops the GP assumption over $(V^0(x))_{x\in\mathbb{R}^3}$ and $(U^0(x))_{x\in\mathbb{R}^3}$. If we only assume that $V^0$ (resp. $U^0$) is a centered second order random field with a.s. continuous (resp. a.s. continuously differentiable) sample paths and covariance function $k_{\mathrm{v}}$ (resp. $k_{\mathrm{u}}$), then $W$ in equation \eqref{eq:W sol} is well-defined, centered, and its covariance function is $k_{\mathrm{w}}$ in equation \eqref{eq:wave kernel}. Only the Gaussianity of $W$ is lost. Indeed, the proof of Proposition 4.1, \cite{hnr_bernoulli}, only uses the aforementioned relaxed assumptions over $U^0$ and $V^0$ to obtain the formula \eqref{eq:wave kernel}. The Gaussianity of $U^0$ and $V^0$ is only used to show that $W$ is also a GP. Non Gaussian (say log normal or exponential) priors are relevant e.g. for modelling nonnegative initial conditions. They are especially interesting for the wave equation because the nonnegativity of the measure $F_t$ yields the following remarkable positivity preserving property: if $u_0 = 0$ and $v_0 \geq 0$, then $w$ in equation \eqref{eq sol wave} verifies $w(x,t) \geq 0$ for all $t\geq 0$.
\end{remark}
Observe now that for all $z = (x,t) \in \mathbb{R}^3 \times \mathbb{R}$, we have $\Box k_{\mathrm{w}}(z,\cdot) = 0$. Using equation \eqref{eq:krig mean}, one then deduces that all the Kriging mean obtained using the kernel $k_{\mathrm{w}}$ always verifies $\Box\tilde{m} = 0$.
For this reason, we call WIGPR (``Wave equation informed GPR") the act of performing GPR with a covariance kernel of the form \eqref{eq:wave kernel}. Note that the inheritance of the distributional PDE constraint over the sample paths of the conditioned GP is proved in \cite{hnr_bernoulli}, Proposition 3.8.

In applications, a first obstacle of WIGPR is the cost of the evaluation of expressions \eqref{eq: kv wave} and \eqref{eq: ku wave}, both in computational resources and in memory. Indeed, their computation requires $4$-dimensional convolutions. This motivates the study of special cases of expressions \eqref{eq: kv wave} and \eqref{eq: ku wave}. In the next paragraphs, we focus on stationarity and radial symmetry assumptions.

\subsubsection{Stationary initial conditions} Many standard covariance kernels used for GPR are stationary \cite{gpml2006}. More generally, a centered second order stochastic process is said to be \textit{stationary in the wide (or weak) sense} if its covariance function is stationary (\cite{gpml2006}, footnote 2 p. 79). Such stochastic processes play a central role in many different fields such as time series analysis or signal processing \cite{hamilton1994}. Because of the popularity of such stationary random field models as well as GPR methods based on stationary kernels, we study equation \eqref{eq: kv wave} when $k_{\mathrm{v}}$ is stationary. For conciseness, we restrict ourselves to the case where $u_0 = 0$, i.e. $k_{\mathrm{u}} = 0$.
\begin{proposition}\label{prop: F_t * F_tp} Assume that $k_{\mathrm{v}}$ is continuous and stationary: $k_{\mathrm{v}}(x,x') = k_S(x-x')$. \\
(i) Then $k_{\mathrm{v}}^{\mathrm{wave}}$ is stationary in space and
\begin{align}\label{eq:k_v_statio}
[(F_t \otimes F_{t'}) * k_{\mathrm{v}}](x,x') = (F_t * F_{t'} * k_S)(x-x').
\end{align}
(ii) Moreover, the measure $F_t * F_{t'}$ is absolutely continuous over $\mathbb{R}^3$. Denoting $|h|$ the Euclidean norm of $h \in \mathbb{R}^3$ and identifying $F_t * F_{t'}$ with its density, we have
\begin{align} \label{f_t star f_t'}
    (F_t * F_{t'})(h) = \frac{\sgn(t)\sgn(t')}{8\pi c^2|h|}\mathbbm{1}_{\big[c\big||t|-|t'|\big|, c(|t| + |t'|)\big]}(|h|).
\end{align}
\end{proposition}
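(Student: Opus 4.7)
The plan is to treat the two parts essentially independently. Part (i) is a direct manipulation of the definition of convolution of a measure against a function of two variables, using the tensor product structure together with the fact that the sphere measures $\sigma_R$ are invariant under $x\mapsto -x$. Part (ii) is a classical direct computation of the convolution of two sphere surface measures in $\mathbb{R}^3$, which I would perform by testing against radial functions and reducing to a one-dimensional change of variables.

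For part (i), I would unfold the definition from Section~\ref{sub:conv_measure} as
\[
[(F_t\otimes F_{t'})*k_{\mathrm{v}}](x,x')=\iint k_S\!\bigl((x-y)-(x'-y')\bigr)\,F_t(dy)\,F_{t'}(dy'),
\]
so that the integrand only involves $h:=x-x'$ and the difference $y-y'$. Since $\sigma_{c|t'|}$ is invariant under the reflection $y'\mapsto -y'$, so is $F_{t'}$, and I may flip the sign of $y'$ in the inner integral to obtain $k_S(h-y-y')$. Recognizing the right-hand side as the convolution of measures $F_t*F_{t'}$ against the function $k_S$ evaluated at $h$ yields both the stationarity in space of $k_{\mathrm{v}}^{\mathrm{wave}}$ and formula~\eqref{eq:k_v_statio}.

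For part (ii), the first step is to factor out the constants and signs in \eqref{eq:ft ftp in 3D}, writing $F_t*F_{t'}=\frac{\sgn(t)\sgn(t')}{16\pi^2 c^4|t||t'|}\,\sigma_{R_1}*\sigma_{R_2}$ with $R_1=c|t|$, $R_2=c|t'|$, and thereby reduce to computing $\sigma_{R_1}*\sigma_{R_2}$. Both factors being $O(3)$-invariant, so is the convolution; hence if it has a density, that density is radial and can be found by testing against an arbitrary radial function $\varphi(z)=g(|z|)$. Placing $y=R_2 e_3$ by rotation invariance and parametrizing $x=R_1\gamma$ in spherical coordinates, one has $|x+y|^2=R_1^2+R_2^2+2R_1R_2\cos\theta$. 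The change of variable $r=|x+y|$, giving $-\sin\theta\,d\theta = r\,dr/(R_1R_2)$, transforms the inner integral into a one-dimensional integral over $r\in[\,|R_1-R_2|,R_1+R_2]$. Multiplying by $\sigma_{R_2}(S_{R_2})=4\pi R_2^2$ and comparing with the radial volume element $\int g(|z|)\rho(|z|)\,dz=\int_0^\infty g(r)\rho(r)\,4\pi r^2\,dr$ identifies the density as $\rho(r)=\frac{2\pi R_1 R_2}{r}\mathbbm{1}_{[|R_1-R_2|,R_1+R_2]}(r)$. Substituting back $R_1,R_2$ and the prefactor immediately produces \eqref{f_t star f_t'}.

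The only place that requires a moment of care is the rigorous justification of the density formula: the computation above only identifies $\rho$ via radial test functions, but this is sufficient because the convolution is rotation invariant, hence determined by its radial marginal; equivalently, one reads off the density directly from the disintegration used in the change of variables, which is valid for all bounded Borel $g$ on the annulus. The remaining verifications (sign of the support, handling of the degenerate case $|t|=|t'|$ giving $|h|\geq 0$, and the Lebesgue absolute continuity claim) are immediate from the explicit form of $\rho$. I do not anticipate a genuine obstacle; the main work is the spherical change of variables in part (ii).
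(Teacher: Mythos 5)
Your proof is correct. Part (i) is essentially the paper's own argument: unfold the tensor-product convolution, use the reflection invariance of the sphere measure to flip the sign of the inner integration variable, and recognize $(F_t*F_{t'}*k_S)(x-x')$. Part (ii), however, takes a genuinely different route. The paper works in Fourier space: it writes $\mathcal{F}(F_t*F_{t'})(\xi)=\sin(ct|\xi|)\sin(ct'|\xi|)/(c^2|\xi|^2)$, expands the product into cosines, inverts the transform by a spherical-coordinates computation that terminates on the Dirichlet integral $\int_0^\infty \sin(\alpha r)r^{-1}dr=\sgn(\alpha)\pi/2$, and then does sign bookkeeping to collapse a sum of four $\sgn$ terms into the indicator of the annulus. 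You instead compute $\sigma_{R_1}*\sigma_{R_2}$ directly in physical space by testing against radial functions and changing variables to $r=|x+y|$; the constants check out ($8\pi^2R_1R_2\int g(r)\,r\,dr$ matched against $4\pi\int g(r)\rho(r)r^2dr$ gives $\rho(r)=2\pi R_1R_2/r$ on $[\,|R_1-R_2|,R_1+R_2]$, which with the prefactor $\sgn(t)\sgn(t')/(16\pi^2c^4|t||t'|)$ reproduces \eqref{f_t star f_t'} exactly). Your route is more elementary: it avoids tempered distributions and, more importantly, the conditionally convergent Dirichlet integral, whose use in the paper's proof implicitly requires a regularization to justify interchanging the inverse Fourier transform with the radial integral. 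It also makes the support transparent (it is the triangle inequality for $|x+y|$ with $|x|=R_1$, $|y|=R_2$) and yields absolute continuity for free, whereas the Fourier route has the merit of exposing why dimension $3$ is special (the $r^2$ cancellation and the exact antiderivative on the sphere), a point the authors emphasize. Your closing remark that a rotation-invariant Radon measure is determined by its action on radial test functions is the correct way to upgrade the identification of the density from radial to arbitrary test functions; the degenerate cases $t=0$ or $t'=0$ are harmless since $F_0=0$ and both sides of \eqref{f_t star f_t'} vanish.
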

If $k_{\mathrm{u}}$ is assumed zero, and if $V^0$ only satisfies the minimal assumptions of Remark \ref{rk:drop_gp} as well as wide sense stationarity, then the covariance expression \eqref{eq:k_v_statio} still holds for the solution process $W$ in equation \eqref{eq:W sol}.
Formally, one can obtain similar formulas for $k_{\mathrm{u}}^{\mathrm{wave}}$ by differentiating the formulas above with respect to $t$ and $t'$, as $\dot{F}_t = \partial_t{F_t}$ ($\dot{F}_t*\dot{F}_{t'}$ will only be a generalized function though).

We underline that the proof of Point $(ii)$ in Proposition \ref{prop: F_t * F_tp} makes use of the specificities of the dimension $3$. First in equation \eqref{eq:simplify r2}, where the scalars $r^2$ cancel each other out; second in \eqref{eq:integ_exact} where an exact antiderivative of the integrated function can be computed. None of these two simplifications hold in higher dimension or in dimension 2, and formulas as simple as equation \eqref{f_t star f_t'} are not expected to hold.
\begin{remark}
Expression \eqref{f_t star f_t'} with $h=x-x'$ is the covariance kernel of the solution process $U$ with initial condition the ``formal" white noise process $V^0$ with the stationary Dirac delta covariance kernel $k_{\mathrm{v}}(x,x') = \delta_0(x-x')$: 
\begin{align}
{[(F_t \otimes F_{t'}) * k_{\mathrm{v}}](x,x') = (F_t * F_{t'} * \delta_0)(x-x') = (F_t * F_{t'})(x-x')}.
\end{align}
Somewhat surprisingly, although formula \eqref{f_t star f_t'} yields a summable function over $\mathbb{R}^3$ when $t$ and $t'$ are fixed, it can not be used for practical computations as the diagonal terms of the related covariance matrices are all singularities: $(F_t * F_t)(0) = +\infty$... Yet, formula \eqref{f_t star f_t'} may be used together with explicit kernels $k_S$ to yield usable expressions. For instance, if $k_{\mathrm{v}}(x,x') = k_S(x-x') = C\exp(-{|x-x'|^2}/{2L^2})$, we state without proof that
\begin{align}
    (F_t * F_{t'}&* k_S)(h) = \nonumber\\ \sgn(tt')& \frac{\sqrt{2\pi}}{2}\frac{CL^3}{c^2}\Bigg(\frac{\Phi\big( \frac{R_1 + |h|}{L}\big)-\Phi\big( \frac{R_1 - |h|}{L}\big)}{2|h|}- \frac{\Phi\big( \frac{R_2 + |h|}{L}\big)-\Phi\big( \frac{R_2 - |h|}{L}\big)}{2|h|}\Bigg),
\end{align}
where $h = x-x'$, $\ \Phi(s) = (2\pi)^{-1/2}\int_{-\infty}^s \exp(-t^2/2)dt$, $R_1 = c\big||t|-|t'|\big|$, $R_2 = c(|t| + |t'|) $. Such a kernel always takes finite values: when $h$ goes to $0$, the above formula reduces to well defined derivatives.
\end{remark}

Although these formulas are interesting in their own right, the study of propagation phenomena is usually done thanks to compactly supported initial conditions, which can never be modelled with wide sense stationary random fields. We partially deal with compactly supported initial conditions in Section \ref{sub: radial}, within the context of radial symmetry.
\subsubsection{Radially symmetric initial conditions}\label{sub: radial}
Assume that the sample paths of the process $V^0$ enjoy radial symmetry around some $x_0 \in \mathbb{R}^3$. This can be expressed in terms of differential operators in $(r,\theta,\phi)$, the spherical coordinate system  around $x_0$:
\begin{align}\label{eq:as_radial}
    \mathbb{P}(\{\omega \in \Omega: \partial_{\theta}V_{\omega}^0 = 0\}) = 1, \ \ \ \text{ and } \ \ \
    \mathbb{P}(\{\omega \in \Omega: \partial_{\phi}V_{\omega}^0 = 0\}) = 1.
\end{align}
Then by Proposition 3.5 of \cite{hnr_bernoulli}, $k_{\mathrm{v}}$ verifies, in the sense of distributions,
\begin{align}\label{eq:radial_k}
    \forall x \in \mathcal{D},\ \ \ \partial_{\theta}(k_{\mathrm{v}}(x,\cdot)) = 0 \ \ \ \text{ and } \ \ \ \partial_{\phi}(k_{\mathrm{v}}(x,\cdot)) = 0.
\end{align}
Thus, there exists a function $k_{\mathrm{v}}^0$ defined on $\mathbb{R}_+\times\mathbb{R}_+$ such that $k_{\mathrm{v}}(x,x') = k_{\mathrm{v}}^0(r^2,r'^2)$, with $r=|x|$, $r'=|x'|$ (directly using the squares $r^2$ and $r'^2$ will simplify computations later on). Similarly, assume that the sample paths of $U^0$ exhibit radial symmetry and write $k_{\mathrm{u}}(x,x') = k_{\mathrm{u}}^0(r^2,r'^2)$. Because of the generality of Proposition 3.5 from \cite{hnr_bernoulli}, the Gaussianity of $V^0$ and $U^0$ are not required. Furthermore, the same theorem states that equations \eqref{eq:as_radial} and \eqref{eq:radial_k} are in fact equivalent. From the radial representations of $k_{\mathrm{v}}$ and $k_{\mathrm{u}}$, we can deduce the following convolution-free formulas for $k_{\mathrm{v}}^{\mathrm{wave}}$ and $k_{\mathrm{u}}^{\mathrm{wave}}$:
\begin{proposition}\label{prop: radial sym kernel}
Set $K_{\mathrm{v}}(r,r') = \int_0^r \int_0^{r'}k_{\mathrm{v}}^0(s,s')dsds'$. Then for all $z = (x,t) \in \mathbb{R}^3 \times \mathbb{R}$ and $z' = (x',t') \in \mathbb{R}^3 \times \mathbb{R}$,
\begin{align}
k_{\mathrm{v}}^{\mathrm{wave}}(z,z') &= \frac{\sgn(tt')}{16c^2 r r'} \sum_{\varepsilon,\varepsilon' \in \{-1,1\}}\varepsilon \varepsilon' K_{\mathrm{v}}\big( (r + \varepsilon c|t|)^2,(r' + \varepsilon' c|t'|)^2\big), \label{eq:ft ft' gen}\\
k_{\mathrm{u}}^{\mathrm{wave}}(z,z') &= \nonumber \\ \frac{1}{4rr'}&\sum_{\varepsilon,\varepsilon' \in \{-1,1\}}(r+\varepsilon c|t|)(r'+\varepsilon' c|t'|)  \times k_{\mathrm{u}}^0\big((r + \varepsilon c|t|)^2,(r' + \varepsilon' c|t'|)^2\big). \label{eq:ftp ft'p gen}
\end{align}
\end{proposition}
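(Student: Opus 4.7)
By translation invariance of the convolution with $F_t$, we may assume without loss of generality that the radial symmetry center is $x_0=0$. Unfolding the definition and using $F_t=\sgn(t)\sigma_{c|t|}/(4\pi c^2|t|)$ rewrites $k_{\mathrm{v}}^{\mathrm{wave}}(z,z')$ as a double surface integral over the spheres $|y|=c|t|$ and $|y'|=c|t'|$ of the integrand $k_{\mathrm{v}}^0(|x-y|^2,|x'-y'|^2)$, with overall prefactor $\sgn(tt')/((4\pi c^2)^2|t||t'|)$. Because this integrand depends on $(y,y')$ only through $|x-y|^2$ and $|x'-y'|^2$, Fubini's theorem allows the two sphere integrals to be performed in turn by the same procedure.

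\emph{Reduction of each sphere integral.} On the $y$-sphere, parameterize in spherical coordinates $(\theta,\phi)$ with polar axis along $x$; then $|x-y|^2=r^2+c^2t^2-2rc|t|\cos\theta$, the integrand is $\phi$-independent (giving a factor $2\pi$), and the substitution $s=|x-y|^2$, $ds=2rc|t|\sin\theta\,d\theta$, is a bijection from $[0,\pi]$ onto $[(r-c|t|)^2,(r+c|t|)^2]$ valid regardless of the sign of $r-c|t|$, since both endpoints are squares. Doing the symmetric reduction on the $y'$-sphere and collecting the factors $(c|t|)^2$, $(c|t'|)^2$ coming from $dS=R^2\,d\Omega$ on each sphere, the prefactors simplify to
\begin{align*}
k_{\mathrm{v}}^{\mathrm{wave}}(z,z')=\frac{\sgn(tt')}{16c^2 rr'}\int_{(r-c|t|)^2}^{(r+c|t|)^2}\int_{(r'-c|t'|)^2}^{(r'+c|t'|)^2} k_{\mathrm{v}}^0(s,s')\,ds'\,ds.
\end{align*}
The fundamental theorem of calculus then rewrites this double integral as $\sum_{\varepsilon,\varepsilon'\in\{-1,1\}}\varepsilon\varepsilon' K_{\mathrm{v}}((r+\varepsilon c|t|)^2,(r'+\varepsilon' c|t'|)^2)$, yielding \eqref{eq:ft ft' gen}.

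\emph{From $k_{\mathrm{v}}^{\mathrm{wave}}$ to $k_{\mathrm{u}}^{\mathrm{wave}}$.} Using $k_{\mathrm{u}}^{\mathrm{wave}}=\partial_t\partial_{t'}[(F_t\otimes F_{t'})*k_{\mathrm{u}}]$, as noted after Proposition \ref{prop: wave kernel}, differentiate the formula just proved (with $K_{\mathrm{u}}$ replacing $K_{\mathrm{v}}$) in $t$ and $t'$. For $t,t'\neq 0$, $\sgn(tt')$ is locally constant; the chain rule gives $\partial_t(r+\varepsilon c|t|)^2=2(r+\varepsilon c|t|)\varepsilon c\sgn(t)$, and $\partial_1\partial_2 K_{\mathrm{u}}(a,b)=k_{\mathrm{u}}^0(a,b)$. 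Each term therefore acquires an additional factor $4c^2\varepsilon\varepsilon'\sgn(t)\sgn(t')(r+\varepsilon c|t|)(r'+\varepsilon' c|t'|)$. Since $\varepsilon\varepsilon'\cdot\varepsilon\varepsilon'=1$ and $\sgn(tt')\sgn(t)\sgn(t')=1$, all signs cancel and the prefactor reduces to $1/(4rr')$, yielding \eqref{eq:ftp ft'p gen}.

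The main obstacle is the sign bookkeeping: the factor $\sgn(t)$ hidden in $F_t=\sigma_{c|t|}/(4\pi c^2 t)$ propagates through both spherical reductions and the subsequent differentiation, and any miscount corrupts both formulas. A minor but convenient observation is that the substitution $s=r^2+c^2t^2-2rc|t|\cos\theta$ traverses $[(r-c|t|)^2,(r+c|t|)^2]$ monotonically for any value of $r-c|t|$, sparing a case distinction.
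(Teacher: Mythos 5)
Your proof is correct and follows essentially the same route as the paper: reduce each spherical surface integral via the substitution $s=|x-y|^2=r^2+c^2t^2-2rc|t|\cos\theta$ (the paper phrases this as evaluating an exact antiderivative in $\theta$ after rotating the polar axis onto $x$), obtain a double integral of $k_{\mathrm{v}}^0$ over a rectangle, apply the fundamental theorem of calculus, and then differentiate in $t$ and $t'$ to pass from \eqref{eq:ft ft' gen} to \eqref{eq:ftp ft'p gen}; your sign and prefactor bookkeeping checks out. The one omission is that your differentiation argument for \eqref{eq:ftp ft'p gen} is only carried out for $t,t'\neq 0$, whereas the proposition asserts the formula for all $t,t'\in\mathbb{R}$: the paper closes this by observing that $\dot{F}_0=\delta_0$ (from the Fourier transform $\mathcal{F}(\dot{F}_t)(\xi)=\cos(ct|\xi|)$) and checking directly that the right-hand side of \eqref{eq:ftp ft'p gen} evaluated at $t=0$ and/or $t'=0$ agrees with the convolution against the Dirac mass, so you should add this short verification of the boundary cases.
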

The expressions \eqref{eq:ft ft' gen} and \eqref{eq:ftp ft'p gen} are interesting in that they are much easier and faster to compute than \eqref{eq: kv wave} and \eqref{eq: ku wave}, which require to compute convolutions. 
\subsubsection{Compactly supported initial conditions} Suppose that $v_0$ is compactly supported on a ball $B(x_0,R)$. The Strong Huygens Principle for the 3 dimensional wave equation (\cite{evans1998}, p. 80) states that $F_t *v_0$ is supported on the spherical shell $B(x_0,R + c|t|) \setminus B(x_0,(R - c|t|)_+)$, where $x_+:= \max(0,x)$.
From a GP modelling perspective, assuming that $\text{Supp}(V^0) \subset B(x_0,R)$ amounts to imposing that $V^0(x) = 0 \ a.s.$ if $x \notin B(x_0,R)$. This is equivalent to $\text{Var}(V^0(x)) = k_{\mathrm{v}}(x,x) = 0$ since $V^0$ is assumed centered. The same reasoning in terms of support can be applied to $u_0$ and $U^0$. In the next proposition, we explore the consequences of such compactness assumptions on the radial formulas \eqref{eq:ft ft' gen} and \eqref{eq:ftp ft'p gen}. The new formulas are readily deduced from Proposition \ref{prop: radial sym kernel}, but we state them on their own as they are the ones used in Section \ref{section: num}.
\begin{proposition}\label{prop: radial sym kernel compact}
Let $R_{\mathrm{v}} > 0$ and $R_{\mathrm{u}} > 0$. Let $\alpha \in (0,1)$ and $\varphi_{\alpha}: \mathbb{R}_+ \rightarrow [0,1]$ be a $C^{1}$ decreasing function such that $\varphi_{\alpha}(s) = 1 \ \text{ if } s < \alpha$ and $\varphi_{\alpha}(s) = 0 \ \text{ if } s \geq 1$.
Set the truncated kernels
\begin{align}
k_{\mathrm{v}}^{R_{\mathrm{v}}}(x,x') &= k_{\mathrm{v}}^{0,R_{\mathrm{v}}}(r^2,r'^2) =  k_{\mathrm{v}}^0(r^2,r'^2)\mathbbm{1}_{[0,R_{\mathrm{v}}]}(r)\mathbbm{1}_{[0,R_{\mathrm{v}}]}(r'), \label{eq:truncated kernel v}  \\
k_{\mathrm{u}}^{R_{\mathrm{u}}}(x,x') &= k_{\mathrm{u}}^{0,R_{\mathrm{u}}}(r^2,r'^2) = k_{\mathrm{u}}^0(r^2,r'^2)\varphi\big(r/R_{\mathrm{u}}\big)\varphi\big(r'/R_{\mathrm{u}}\big). \label{eq:truncated kernel u}
\end{align}
Assume now that $V^0 \sim GP(0,k_{\mathrm{v}}^{R_{\mathrm{v}}})$ and $U^0 \sim GP(0,k_{\mathrm{u}}^{R_{\mathrm{u}}})$.
Then, defining the function $K_{\mathrm{v}}(r,r') = \int_0^{r}
\int_0^{r'}k_{\mathrm{v}}^0(s,s')dsds'$, the two following formulas hold
\allowdisplaybreaks
\begin{align}
k_{\mathrm{v}}^{\mathrm{wave}}&(z,z') = \frac{\sgn(tt')}{16c^2 r r'}\times \nonumber \\
 &\sum_{\varepsilon,\varepsilon' \in \{-1,1\}}\varepsilon \varepsilon' K_{\mathrm{v}}\Big(\min\big( (r + \varepsilon c|t|)^2,R_{\mathrm{v}}^2\big),\min\big((r' + \varepsilon' c|t'|)^2,R_{\mathrm{v}}^2\big)\Big), \label{eq:ft ft' compact}\\
k_{\mathrm{u}}^{\mathrm{wave}}&(z,z') = \frac{1}{4rr'} \times \nonumber\\ 
&\sum_{\varepsilon,\varepsilon' \in \{-1,1\}}(r+\varepsilon c|t|)(r'+\varepsilon' c|t'|)k_{\mathrm{u}}^{0,R_{\mathrm{u}}}\big((r + \varepsilon c|t|)^2,(r' + \varepsilon' c|t'|)^2\big). \label{eq:ftp ft'p compact}
\end{align}
\end{proposition}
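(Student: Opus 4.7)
The plan is to apply Proposition \ref{prop: radial sym kernel} directly to the truncated kernels $k_{\mathrm{v}}^{R_{\mathrm{v}}}$ and $k_{\mathrm{u}}^{R_{\mathrm{u}}}$, treating their radial densities $k_{\mathrm{v}}^{0,R_{\mathrm{v}}}$ and $k_{\mathrm{u}}^{0,R_{\mathrm{u}}}$ as the new ``raw'' radial densities. The first step is to verify that truncation preserves radial symmetry in each argument: the cutoffs $\mathbbm{1}_{[0,R_{\mathrm{v}}]}(r)\mathbbm{1}_{[0,R_{\mathrm{v}}]}(r')$ and $\varphi_{\alpha}(r/R_{\mathrm{u}})\varphi_{\alpha}(r'/R_{\mathrm{u}})$ depend only on the norms $r=|x|$, $r'=|x'|$, so \eqref{eq:truncated kernel v} and \eqref{eq:truncated kernel u} provide genuine radial representations of the form $k^{0}(r^2,r'^2)$. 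Positive semidefiniteness of the truncated kernels then follows from Schur's product theorem, since each cutoff factor is a rank-one outer product $g(x)g(x')$, and the two priors $V^0\sim GP(0,k_{\mathrm{v}}^{R_{\mathrm{v}}})$ and $U^0\sim GP(0,k_{\mathrm{u}}^{R_{\mathrm{u}}})$ are well defined.

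For the $u$-part, substituting $k_{\mathrm{u}}^{0,R_{\mathrm{u}}}$ into the formula of Proposition \ref{prop: radial sym kernel} reproduces \eqref{eq:ftp ft'p compact} verbatim, so nothing further is required. For the $v$-part, the only remaining algebraic step is to evaluate the double primitive of the truncated density. Since $\mathbbm{1}_{[0,R_{\mathrm{v}}]}(r)=\mathbbm{1}_{[0,R_{\mathrm{v}}^2]}(r^2)$ for $r\geq 0$, one has $k_{\mathrm{v}}^{0,R_{\mathrm{v}}}(s,s')=k_{\mathrm{v}}^0(s,s')\mathbbm{1}_{[0,R_{\mathrm{v}}^2]}(s)\mathbbm{1}_{[0,R_{\mathrm{v}}^2]}(s')$, so the indicators cap the integration range at $R_{\mathrm{v}}^2$ in each variable and
\[
\int_0^a\!\!\int_0^b k_{\mathrm{v}}^{0,R_{\mathrm{v}}}(s,s')\,ds\,ds' \;=\; K_{\mathrm{v}}\bigl(\min(a,R_{\mathrm{v}}^2),\min(b,R_{\mathrm{v}}^2)\bigr).
\]
Substituting $a=(r+\varepsilon c|t|)^2$ and $b=(r'+\varepsilon' c|t'|)^2$ into \eqref{eq:ft ft' gen} applied to $k_{\mathrm{v}}^{R_{\mathrm{v}}}$ then yields \eqref{eq:ft ft' compact}.

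The only delicate step, and what I expect to be the main subtlety in a careful write-up, is the regularity bookkeeping inherited from Proposition \ref{prop: wave kernel}, which requires $U^0$ to have a.s. $C^1$ sample paths and $V^0$ a.s. continuous ones. The use of a $C^1$ cutoff $\varphi_{\alpha}$ is precisely what preserves this regularity on the $u$-side, so Proposition \ref{prop: radial sym kernel} applies to $k_{\mathrm{u}}^{R_{\mathrm{u}}}$ without modification. The indicator truncation on the $v$-side, however, is discontinuous at $r=R_{\mathrm{v}}$ and generically destroys sample-path continuity of $V^0$; I would handle this by invoking Remark \ref{rk:drop_gp}, which replaces the strict GP hypothesis by the weaker requirement that $V^0$ be a centered second-order random field with the prescribed covariance and a.s. continuous sample paths. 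A concrete realization is $V^0(x)=\mathbbm{1}_{[0,R_{\mathrm{v}}]}(|x|)\,\widetilde V^0(x)$ for some $\widetilde V^0\sim GP(0,k_{\mathrm{v}})$; it has covariance $k_{\mathrm{v}}^{R_{\mathrm{v}}}$ and leaves \eqref{eq: kv wave}, and therefore \eqref{eq:ft ft' compact}, intact, at the sole cost of dropping the Gaussianity of $W$, which the statement does not claim.
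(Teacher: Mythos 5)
Your proof is correct and follows essentially the same route as the paper's: substitute the truncated radial densities into Proposition \ref{prop: radial sym kernel}, observe that the indicators cap the double primitive at $R_{\mathrm{v}}^2$ (producing the $\min$'s), and note that the $C^1$ cutoff $\varphi_{\alpha}$ justifies the $t,t'$ differentiations on the $u$-side. One small caveat on your regularity discussion: Remark \ref{rk:drop_gp} relaxes Gaussianity but still requires a.s.\ continuous sample paths, so it does not by itself repair the discontinuity of $\mathbbm{1}_{[0,R_{\mathrm{v}}]}(|x|)V^0(x)$ at $|x|=R_{\mathrm{v}}$ (and Gaussianity is not actually lost there, since multiplying a GP by a deterministic function preserves it); the paper instead handles this point by noting, via \cite{hnr_bernoulli} Section 4.2.1, that the computations leading to \eqref{eq: kv wave} still go through for this piecewise-continuous process.
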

Notice that the truncated kernels
$k_{\mathrm{v}}^{R_{\mathrm{v}}}$ and $k_{\mathrm{u}}^{R_{\mathrm{u}}}$ are the covariance kernels of the truncated processes $V_{\mathrm{trunc}}^0(x) = \mathbbm{1}_{[0,R_{\mathrm{v}}]}(|x-x_0|)V^0(x)$ and $U_{\mathrm{trunc}}^0(x) = \varphi\big(|x-x_0|/R_{\mathrm{u}}\big)U^0(x)$ respectively. For $k_{\mathrm{u}}^{R_{\mathrm{u}}}$, the truncation procedure has to be sufficiently smooth to compute $(\dot{F}_t*\dot{F}_{t'})*k_{\mathrm{u}}^{R_{\mathrm{u}}}$, which requires to differentiate $k_{\mathrm{u}}^{R_{\mathrm{u}}}$. In contrast, we used a blunt truncation for $k_{\mathrm{v}}^{R_{\mathrm{v}}}$. Strictly speaking, the sample paths of $V_{\mathrm{trunc}}^0(x)$ are not continuous and Proposition \ref{prop: wave kernel} cannot be used on this GP. However, as discussed in \cite{hnr_bernoulli}, Section 4.2.1, it is easily checked that for $V_{\mathrm{trunc}}^0(x)$, all the computations leading to equation \eqref{eq: kv wave} still hold, and thus equation \eqref{eq:ft ft' compact} also holds.

We also observe that such compactly supported kernels can never be stationary as their sample paths are compactly supported.
Using equation \eqref{eq:ft ft' compact}, one can indeed check that $k_{\mathrm{v}}^{\mathrm{wave}}(z,z) = \text{Var}(V(z)) = 0$ as soon as $(r - c|t|)^2 > R_{\mathrm{v}}^2$, ie $ V(z) = 0 \ a.s. $ and likewise for $k_{\mathrm{u}}^{\mathrm{wave}}$: this is the expression of the strong Huygens principle on the kernels $k_{\mathrm{v}}^{\mathrm{wave}}$ and $k_{\mathrm{u}}^{\mathrm{wave}}$.
Such compactly supported kernels may lead to sparse covariance matrices which may then be used for computational speedups (a topic we leave aside in this article).

\subsubsection{Estimation of physical parameters}\label{subsub:estimation}
The wave kernel \eqref{eq:wave kernel}, using for $k_{\mathrm{u}}$ and $k_{\mathrm{v}}$ radially symmetric kernels supported in $B(x_0^{\mathrm{u}},R_{\mathrm{u}})$ and $B(x_0^{\mathrm{v}},R_{\mathrm{v}})$ respectively, has for hyperparameters $\theta = (c,x_0^{\mathrm{u}},R_{\mathrm{u}},\theta_{k_{\mathrm{u}}^0},x_0^{\mathrm{v}},R_{\mathrm{v}},\theta_{k_{\mathrm{v}}^0})$
Among those, $(c,x_0^{\mathrm{u}},R_{\mathrm{u}},x_0^{\mathrm{v}},R_{\mathrm{v}})$ all correspond to physical parameters. Their estimation via likelihood maximisation is numerically investigated in Section \ref{section: num}.
Note that finding the correct radii $R_{\mathrm{u}}$ and $R_{\mathrm{v}}$ is not a well posed problem: if $\text{Supp} (U^0) \subset B(x_0^{\mathrm{u}},R_{\mathrm{u}})$ then $\text{Supp}(U^0) \subset B(x_0^{\mathrm{u}},\alpha R_{\mathrm{u}})$ for any $\alpha \geq 1$ and $\alpha R_{\mathrm{u}}$ is also a suitable candidate for $R_{\mathrm{u}}$. This is discussed in Section \ref{section: num}.

\begin{remark}[GPR, radial symmetry and the 1D wave equation]
It is known that the radially symmetric 3D wave equation is equivalent to the 1D wave equation, by introducing $\Tilde{w}(r,t) = rw(x,t)$, $r=|x|$. 
However, the joint problem of approximating a radially symmetric solution $w$ of Problem \eqref{eq:wave_eq} with GPR \textit{and} searching for the correct source location parameters $(x_0^{\mathrm{u}},R_{\mathrm{u}},x_0^{\mathrm{v}},R_{\mathrm{v}})$ cannot be reduced to the one dimensional case, as the source centers $x_0^{\mathrm{u}}$ and $x_0^{\mathrm{v}}$ both lie in $\mathbb{R}^3$.
\end{remark}

\subsection{The Point Source limit}
The case of the point source deserves a study on its own as it plays a central role for linear PDEs, both in theory \cite{Duffy2015GreensFW} and in applications. For the wave equation, modelling the source term as a point source (i.e. a Dirac mass) is relevant in a number of real life cases: a localized detonation in acoustics, an electric point source in electromagnetics, a point mass in mechanics and so forth. In this section, we will not make use of the Kriging equations \eqref{eq:krig mean} and \eqref{eq:krig cov} as reconstructing an initial condition that is a point source is actually of little interest. Also, reconstructing the wave equation's Green's function thanks to a pointwise approximation such as GPR is expected to yield poor results because this Green's function in particular is not even defined pointwise: it is a family of singular measures, see equation \eqref{eq:ft ftp in 3D}. However, estimating the physical parameters attached to it, essentially the position parameter $x_0$, is a relevant question and an attainable goal. This is the topic of this section, where we study the behaviour of the log marginal likelihood that comes with WIGPR when the initial condition reduces to a point source. On a more general level, this section also serves as an illustration of the very explicit links one may draw between classical PDE based models and Bayesian kernel methods using physics informed kernels. We will restrict ourselves to the case $u_0 = 0$ in equation \eqref{eq:wave_eq} and thus focus on the kernel $k_{\mathrm{v}}^{\mathrm{wave}}(z,z')$. We begin by clarifying the setting in which we will work.
\subsubsection{Setting, assumptions and objectives} 
\begin{enumerate}[label=\textit{(\roman*)},wide,labelwidth=0em,labelindent=0pt]
\item Note $x_1,...,x_q$ the $q$ sensor locations and assume that we have $N$ time measurements in $[0,T]$ corresponding to times $0 = t_1<... < t_N = T$ for each sensor; we have overall $n = Nq$ pointwise observations of a function $w$ that is a solution of the problem \eqref{eq:wave_eq}. The space-time observation locations $(x_i,t_j)$ are stored in a vector $Z = (Z_1|\cdots|Z_q)^T$ where $Z_i:= ((x_i,t_1),...,(x_i,t_N))$ corresponds to the $i^{th}$ sensor. The observations are then stored in the column vector $w_{\mathrm{obs}} = (w(Z_1)|...|w(Z_q))^T$.
\item We assume that the initial condition $v_0$ corresponding to $w$ is almost a point source: in particular it is supported on a small ball $B(x_0^*,R^*)$ where $R^* \ll 1$.
\item We are interested in finding $x_0^*$, the correct source location. To do so, we study the log marginal likelihood associated to the observations $w_{\mathrm{obs}}$, using a covariance kernel associated to initial conditions truncated around a ball $B(x_0,R)$ to be estimated. Set first $k_{x_0}^{\mathrm{R}}(x,x'):= (4\pi R^3/3)^{-2}k_{\mathrm{v}}(x,x')\mathbbm{1}_{B(x_0,R)}(x)\mathbbm{1}_{B(x_0,R)}(x')$ where $k_{\mathrm{v}}$ is a given a covariance function. The pre-factor $(4\pi R^3/3)^{-2}$ is an anticipation of the upcoming Proposition \ref{prop: point source}. We will then use the wave kernel
\begin{align}
k_{x_0}^{\mathrm{wave,R}}((x,t),(x',t')) &= [(F_t \otimes F_{t'})*k_{x_0}^{\mathrm{R}}](x,x'). \label{eq:trunc_point}
\end{align}
We then view $(x_0,R)$ as hyperparameters of $k_{x_0}^{\mathrm{wave,R}}$, and we denote $(x_0^*,R^*)$ the real source position and size.
\item We assume that except for $x_0$, all the other hyperparameters $\theta$ of $k_{x_0}^{\mathrm{wave,R}}$ are fixed. In particular, we assume that $R = R^*$ and $c = c^*$, where $c^*$ is the true celerity parameter appearing in the wave equation.
\end{enumerate}
In that framework, the log-marginal likelihood $p(w_{\mathrm{obs}}|\theta)$ only depends on $x_0$. We thus write $K_{x_0} := k_{x_0}^{\mathrm{wave,R}}(Z,Z)$ and $\mathcal{L}(\theta,\lambda) = \mathcal{L}(x_0,\lambda)$, $\lambda$ being a Tikhonov regularization parameter (see equation \eqref{eq:log lkhd x_0} below). The log-marginal likelihood then writes
\begin{align}\label{eq:log lkhd x_0}
\mathcal{L}(\theta,\lambda) = \mathcal{L}(x_0,\lambda) = w_{\mathrm{obs}}^T(K_{x_0} + \lambda I_n)^{-1}w_ {\mathrm{obs}} + \log \det(K_{x_0} + \lambda I_n).
\end{align}
\subsubsection{Level sets of $\mathcal{L}(x_0,\lambda)$ and GPS localization}
In Figure \ref{fig: point source}, we provide a 3 dimensional image which displays the numerical values of the map $x_0 \mapsto\mathcal{L}(x_0,\lambda)$ that are below a suitable threshold, on a test case. This figure constitutes visual evidence that in the limit $R \rightarrow 0$, recovering a point source location from minimizing the log marginal likelihood provided by the kernel \eqref{eq:trunc_point} reduces to the classic true-angle multilateration method used for example in GPS systems (see e.g. \cite{trilateration_fang}). In this localization method, the user who is located on a sphere (Earth) sends signals to satellites gravitating around the Earth. From the corresponding time measurements, the distance between the satellite and the user is deduced, which in turn defines a sphere (one for each satellite) on which the user is located. The location of the user lies at the intersection of those spheres, and the Earth. At least three satellites are needed for this intersection to be reduced to a point.

On Figure \ref{fig: point source}, three facts in particular are noteworthy; our task will be to explain them mathematically. First, as a function of $x_0$, $\mathcal{L}(x_0,\lambda)$ reaches local minima over the whole surface of spheres centered on each sensor. Second, at the intersection of two of those spheres, the local minima are smaller. Third, the spheres all intersect at a single point $x_0^*$, which is the global minima of $\mathcal{L}(x_0,\lambda)$ and the real source location.
\begin{figure}[!ht]
	\hspace{-20pt}\centerline{\includegraphics[width=1.8\textwidth]{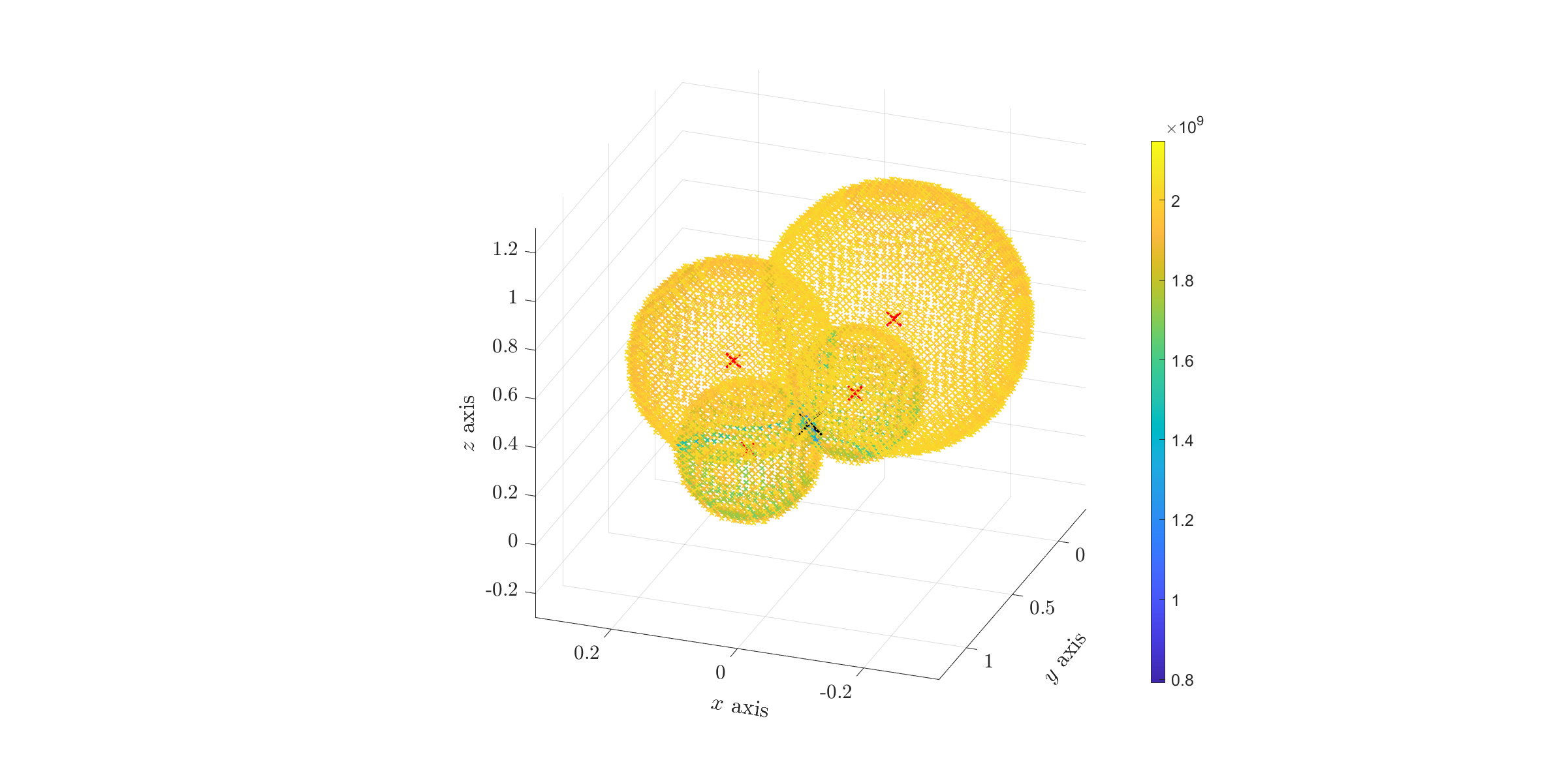}}
	\caption{Negative log marginal likelihood as a function of $x_0 \in \mathbb{R}^3$. Are only represented values of the negative log marginal likelihood that are below $2.035 \times 10^9$. There only remains thin spherical shells. Red crosses: sensor locations. Black cross: source position. The source is located at the intersection of spheres centered at the sensor locations.}
	\label{fig: point source}
\end{figure}

On our way to explaining these three facts, we begin with a convergence statement describing the point source limit, from a covariance point of view. 
\begin{proposition}\label{prop: point source}
Let $k$ be a continuous positive semidefinite function defined on $\mathbb{R}^3 \times \mathbb{R}^3$ and let $x_0 \in \mathbb{R}^3$. For $R > 0$, define $k_{x_0}^{\mathrm{R}}$ its truncation around $x_0$ by
\begin{align*}
    k_{x_0}^{\mathrm{R}}(x,x') =  k(x,x')\mathbbm{1}_{B(x_0,R)}(x)\mathbbm{1}_{B(x_0,R)}(x')/(4\pi R^3/3)^2.
\end{align*} 
Let $t,t' \in \mathbb{R}$. Then $(F_t \otimes F_{t'})*k_{x_0}^{\mathrm{R}}$ defines an absolutely continuous Radon measure over $\mathbb{R}^3 \times \mathbb{R}^3$. Furthermore we have the following weak-$\star$ convergence in the space of Radon measures (i.e. the dual of $C_c(\mathbb{R}^3 \times \mathbb{R}^3)$, the latter space being the space of continuous functions over $\mathbb{R}^3 \times \mathbb{R}^3$ with compact support):
\begin{align}\label{eq:cv point source}
   [(F_t \otimes F_{t'})*k_{x_0}^{\mathrm{R}}] \xrightarrow[R \rightarrow 0]{C_c(\mathbb{R}^3 \times \mathbb{R}^3)'}  k(x_0,x_0) \times (\tau_{x_0} F_t) \otimes (\tau_{x_0}F_{t'}),
\end{align}
where $\tau_{x}\mu$, the translation of $\mu$ by $x$, is defined by $\int f(y)\tau_{x}\mu(dy):= \int f(x+y)\mu(dy)$.
\end{proposition}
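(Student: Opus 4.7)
The plan is to first check absolute continuity by an explicit computation of the convolution, then prove the weak-$\star$ convergence by pairing against an arbitrary test function $\varphi \in C_c(\mathbb{R}^3\times\mathbb{R}^3)$, using Fubini and a rescaling that unveils a mollifier structure, and concluding via dominated convergence.

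For absolute continuity, I use the definition of tensor-product convolution from Section \ref{sub:conv_measure}: $(F_t \otimes F_{t'}) * k_{x_0}^{\mathrm{R}}$ is the pointwise-defined function
\[
h_R(x,x') = \int_{\mathbb{R}^3}\int_{\mathbb{R}^3} k_{x_0}^{\mathrm{R}}(x-y,x'-y')\,F_t(dy)\,F_{t'}(dy').
\]
Because $k_{x_0}^{\mathrm{R}}$ is bounded (by $\max_{\overline{B(x_0,R)}^2}|k|/(4\pi R^3/3)^2$) and $F_t,F_{t'}$ are finite positive measures with compact support, $h_R$ is bounded and measurable, and its support is contained in the compact set $(\overline{B(x_0,R)}+\mathrm{supp}\,F_t)\times(\overline{B(x_0,R)}+\mathrm{supp}\,F_{t'})$. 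Hence $h_R \in L^1(\mathbb{R}^3\times\mathbb{R}^3)$, which gives the desired absolute continuity.

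For the convergence, I fix $\varphi \in C_c(\mathbb{R}^3\times\mathbb{R}^3)$, and apply Fubini (licit since all integrands are bounded with compact support) together with the change of variable $(u,u')=(x-y,x'-y')$, which yields $\langle h_R,\varphi\rangle = \iint g_R(y,y')\,F_t(dy)F_{t'}(dy')$ with $g_R(y,y') = \iint k_{x_0}^{\mathrm{R}}(u,u')\,\varphi(u+y,u'+y')\,du\,du'$. The crucial step is then to perform the rescaling $u = x_0 + Rs$, $u' = x_0 + Rs'$, which, combined with $|B(0,1)|=4\pi/3$, rewrites $g_R$ as the normalized average
\[
g_R(y,y') = \frac{1}{|B(0,1)|^2}\iint_{B(0,1)^2} k(x_0+Rs,x_0+Rs')\,\varphi(x_0+Rs+y,x_0+Rs'+y')\,ds\,ds',
\]
revealing the mollifier structure of $k_{x_0}^{\mathrm{R}}$ about $(x_0,x_0)$.

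Finally I pass to the limit. By continuity of $k$ at $(x_0,x_0)$ and uniform continuity of $\varphi$ on compact sets, the integrand above converges, uniformly in $(s,s')\in B(0,1)^2$ and in $(y,y')$ lying in any fixed compact set, to $k(x_0,x_0)\,\varphi(x_0+y,x_0+y')$ as $R\to 0$; in particular this holds uniformly on the compact set $\mathrm{supp}\,F_t \times \mathrm{supp}\,F_{t'}$. Since $F_t$ and $F_{t'}$ are finite, dominated convergence gives
\[
\lim_{R\to 0}\langle h_R,\varphi\rangle = k(x_0,x_0)\iint \varphi(x_0+y,x_0+y')\,F_t(dy)F_{t'}(dy') = k(x_0,x_0)\,\langle (\tau_{x_0}F_t)\otimes(\tau_{x_0}F_{t'}),\varphi\rangle,
\]
which is the claim. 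The main obstacle is conceptual rather than technical: once the rescaling makes the mollifier interpretation of $k_{x_0}^{\mathrm{R}}$ explicit (and explains the normalization constant $(4\pi R^3/3)^{-2}$ chosen in the definition), everything else is standard bookkeeping with Fubini and dominated convergence.
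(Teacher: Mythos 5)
Your proof is correct and follows essentially the same route as the paper's: pair against a test function, apply Fubini, rescale the truncated kernel to expose its mollifier structure around $(x_0,x_0)$ (which is exactly what the normalization $(4\pi R^3/3)^{-2}$ is for), and conclude by dominated convergence --- the paper merely writes $F_t$ out explicitly in spherical coordinates via the Kirchhoff formula and performs the rescaling $x = x_0 + c|t|\gamma + R\rho\gamma_x$ over a compact subset of $\mathbb{R}^{10}$, rather than keeping $F_t$ as an abstract measure as you do. One cosmetic slip: $F_t = \sigma_{c|t|}/(4\pi c^2 t)$ is a finite \emph{signed} measure (negative for $t<0$), not a positive one, but your argument only uses finiteness of the total variation, so nothing breaks.
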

As before, the kernel $k_{x_0}^{\mathrm{R}}$ of Proposition  \ref{prop: point source} is the covariance kernel of the truncated process $V_{\mathrm{trunc}}^0(x) = \mathbbm{1}_{B(x_0,R)}(x)V^0(x)/(4\pi R^3/3)$.
The limit object we obtain in equation \eqref{eq:cv point source} is not a function but a singular measure, and thus it cannot be a covariance function. This means that we do not obtain a Gaussian process in the point source limit. More precisely, the Gaussian process associated to the covariance function $k_{x_0}^{\mathrm{wave,R}}$ degenerates into a Gaussian measure \cite{bogachev1998gaussian} over the locally convex space $C_c(\mathbb{R}^3\times\mathbb{R}^3)$ when $R$ goes to zero, though we leave aside this observation for now. On a formal level though, Proposition \ref{prop: point source} provides an entry point for studying the log marginal likelihood \eqref{eq:log lkhd x_0} associated with the kernel \eqref{eq:trunc_point} when $R$ is small. Indeed, Proposition \ref{prop: point source} states that for small values of $R$, the kernel \eqref{eq:trunc_point} behaves like a rank one kernel, i.e. a kernel of the form $k(z,z') = f(z)f(z')$ for some particular function $f$. This observation will prove to be enough for explaining the patterns observed in Figure \ref{fig: point source}.

Properly dealing with the limit $R\rightarrow 0$ implies that we use a mathematical framework compatible with general Radon measures, as indicated by Proposition \ref{prop: point source}. This also implies an additional layer of technicality. Instead, we  introduce regularized (mollified) versions of both the limit object in Proposition \ref{prop: point source} and $\mathcal{L}(x_0,\lambda)$, and study these regularized terms.
This is the content of Propositions \ref{prop: log lik reg} and \ref{prop: log lik reg N}, which are statements on the regularized log marginal likelihood $\mathcal{L}_{\mathrm{reg}}(x_0,\lambda)$ introduced in equation \eqref{eq:log lik reg}. Note however that proving a rigorous mathematical statement linking the behaviours of $\mathcal{L}(x_0,\lambda)$ and $\mathcal{L}_{\mathrm{reg}}(x_0,\lambda)$ is an open question.
\subsubsection{Point source mollification}
We start with regularizing $F_t$ thanks to a mollifier $\varphi(x)$ on $\mathbb{R}^3$ which we choose to be radially symmetric as in \cite{evans2018measure}, section 4.2.1.
Define $\varphi_R(y) = \varphi(y/R)/R^3$, then a $\mathcal{C}^{\infty}_c$ regularization of $F_t$ is obtained by setting $f_t^{\mathrm{R}}(x):= (F_t * \varphi_R)(x)$ for all $x$ in $\mathbb{R}^3$. As $F_t$, $f_t^{\mathrm{R}}$ exhibits radial symmetry. We will next use the following regularizations:
\begin{itemize}[wide,labelwidth=0em,labelindent=0pt]
\item Note $k_{x_0}^{\mathrm{reg}}((x,t),(x',t')):= f_t^{\mathrm{R}}(x-x_0)f_{t'}^{\mathrm{R}}(x'-x_0)$, which plays the role of a regularized version of the limit measure in  Proposition \ref{prop: point source}. The same proposition states that in some sense, when $R$ approaches $0$, $k_{x_0}^{\mathrm{wave,R}}$ is close to $k_{x_0}^{\mathrm{reg}}$. Denote also $F_{x_0}:= ( F_{x_0}^1 | \cdots | F_{x_0}^q)^T$, with $F_{x_0}^i:= (f_{t_1}^{\mathrm{R}}(x_i-x_0),...,f_{t_N}^{\mathrm{R}}(x_i-x_0))$. The covariance matrix corresponding to the hyperparameter $x_0$ is then given by $K_{x_0}^{\mathrm{reg}} = k_{x_0}^{\mathrm{reg}}(Z,Z) = F_{x_0}F_{x_0}^T$. In particular it is rank one.
\item We also assume that $w(x_i,t_j)$ can be approximated by $\tilde{w}(x_i,t_j) = f_{t_j} ^{\mathrm{R}}(x_i-x_0^*)$ as in the point source limit, $v_0 = \delta_{x_0^*}$ and in that case we would have $w(x_i,t_j) = (F_{t_j} * v_0)(x_i) = F_{t_j}(x_i-x_0^*)$ (forgetting for a second that $F_t$ is not defined pointwise). We thus introduce the column vector of ``approximated observations" $W = \big(\tilde{w}(x_i,t_j)\big)_{i,j}$ and we assume that $W$ is ordered as $
W = (W_1 | \cdots | W_q)^T$ where $W_i$ corresponds to the $i^{th}$ sensor: $W_i = (\tilde{w}(x_i,t_1),...,\tilde{w}(x_i,t_N))\in \mathbb{R}^N$.
\end{itemize}
We may then introduce the ``regularized" log marginal likelihood built by replacing $k$ with $k_{x_0}^{\mathrm{reg}}$ and $w_{\mathrm{obs}}$ by $W$:
\begin{align}\label{eq:log lik reg}
\mathcal{L}_{\mathrm{reg}}(x_0,\lambda):= W^T(K_{x_0}^{\mathrm{reg}} + \lambda I_n)^{-1}W + \log \det(K_{x_0}^{\mathrm{reg}} + \lambda I_n),
\end{align}
where we recall that $K_{x_0}^{\mathrm{reg}} = k_{x_0}^{\mathrm{reg}}(Z,Z) = F_{x_0}F_{x_0}^T$. We will then study $\mathcal{L}_{\mathrm{reg}}(x_0,\lambda)$ in the place of $\mathcal{L}(x_0,\lambda)$; as stated before, we expect that $\mathcal{L}(x_0,\lambda)$ behaves similarly to $\mathcal{L}_{\mathrm{reg}}(x_0,\lambda)$, although proofs of such statements are lacking for the moment.
We begin with a proposition which describes the asymptotic behaviour of $\mathcal{L}_{\mathrm{reg}}(x_0,\lambda)$ in the limit of $\lambda \rightarrow 0$. This limit corresponds to noiseless observations, and the limit object in Proposition \ref{prop: log lik reg} provides an explanation of the patterns of Figure \ref{fig: point source}.

\begin{proposition}[Asymptotic behaviour of $\mathcal{L}_{\mathrm{reg}}(x_0,\lambda)$ when $\lambda \rightarrow 0$]\label{prop: log lik reg}
Let $\varepsilon > 0$ and $E_{\varepsilon}:= \{x_0 \in \mathbb{R}^3: ||F_{x_0}||_{\mathbb{R}^n}^2 > \varepsilon \}$. Define the correlation coefficient between $F_{x_0}$ and $W$ by $r(x_0) = \text{Corr}(F_{x_0},W) = \langle F_{x_0},W \rangle_{\mathbb{R}^n} /(||W||_{\mathbb{R}^n} ||F_{x_0}||_{\mathbb{R}^n})$. We set $r(x_0) = 0$ if $F_{x_0} = 0$.
Then we have the following pointwise convergence:
\begin{align*}
\forall x_0 \in \mathbb{R}^3, \ \ \ \big|\lambda \mathcal{L}_{\mathrm{reg}}(x_0,\lambda) - ||W||_{\mathbb{R}^n}^2\big(1-r(x_0)^2\big)\big| = O_{\lambda \rightarrow 0}(\lambda \log \lambda),
\end{align*}
and the uniform convergence on $E_{\varepsilon}$
\begin{align*}
\sup_{x_0 \in E_{\varepsilon}} \big|\lambda \mathcal{L}_{\mathrm{reg}}(x_0,\lambda) - ||W||_{\mathbb{R}^n}^2\big(1-r(x_0)^2\big)\big| = O_{\lambda \rightarrow 0}(\lambda \log \lambda).
\end{align*}
\end{proposition}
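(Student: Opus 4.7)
The plan is to exploit the fact that $K_{x_0}^{\mathrm{reg}} = F_{x_0}F_{x_0}^T$ is a symmetric rank-one matrix, which gives closed-form expressions for both the regularized inverse and determinant appearing in $\mathcal{L}_{\mathrm{reg}}(x_0,\lambda)$. First I would apply the Sherman--Morrison identity to obtain
\begin{align*}
W^T(K_{x_0}^{\mathrm{reg}} + \lambda I_n)^{-1} W \;=\; \frac{\|W\|^2}{\lambda} \,-\, \frac{\langle F_{x_0}, W\rangle^2}{\lambda(\lambda + \|F_{x_0}\|^2)}.
\end{align*}
For the determinant, since the eigenvalues of $K_{x_0}^{\mathrm{reg}} + \lambda I_n$ are $\lambda + \|F_{x_0}\|^2$ (with multiplicity one if $F_{x_0}\neq 0$) and $\lambda$ (with multiplicity $n-1$), I obtain $\log\det(K_{x_0}^{\mathrm{reg}} + \lambda I_n) = \log(\lambda + \|F_{x_0}\|^2) + (n-1)\log\lambda$.

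Next I multiply $\mathcal{L}_{\mathrm{reg}}(x_0,\lambda)$ by $\lambda$ and collect terms into the single identity
\begin{align*}
\lambda\,\mathcal{L}_{\mathrm{reg}}(x_0,\lambda) \;=\; \|W\|^2 \,-\, \frac{\langle F_{x_0},W\rangle^2}{\lambda + \|F_{x_0}\|^2} \,+\, \lambda\log(\lambda + \|F_{x_0}\|^2) \,+\, (n-1)\lambda\log\lambda.
\end{align*}
When $F_{x_0}\neq 0$, the algebraic identity
\begin{align*}
\frac{\langle F_{x_0},W\rangle^2}{\lambda + \|F_{x_0}\|^2} \,-\, \|W\|^2 r(x_0)^2 \;=\; -\frac{\lambda\,\langle F_{x_0},W\rangle^2}{\|F_{x_0}\|^2(\lambda + \|F_{x_0}\|^2)}
\end{align*}
extracts the leading term $\|W\|^2(1-r(x_0)^2)$ with an error bounded by $\lambda\|W\|^2/\|F_{x_0}\|^2$. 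The case $F_{x_0}=0$ is handled by direct substitution, using the convention $r(x_0)=0$ (the rational term simply vanishes, and only the logarithmic remainders survive). Combined with the obvious $O(\lambda\log\lambda)$ bound on these two logarithmic terms, this establishes the pointwise statement.

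To upgrade to uniform convergence on $E_\varepsilon$, every constant has to be made independent of $x_0 \in E_\varepsilon$. The lower bound $\|F_{x_0}\|^2 > \varepsilon$ immediately controls the error from the rational term by $\lambda\|W\|^2/\varepsilon$, which is uniformly $O(\lambda)$. For the logarithmic contribution, I would establish a uniform upper bound $\|F_{x_0}\|^2 \leq M$ valid for all $x_0 \in \mathbb{R}^3$: this follows from $f_{t_j}^{\mathrm{R}} = F_{t_j} * \varphi_R$ being uniformly bounded on $\mathbb{R}^3$, since the Radon measure $F_{t_j}$ has finite total mass and $\varphi_R \in L^\infty$. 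With this bound in hand, $\lambda\log(\lambda + \|F_{x_0}\|^2) \leq \lambda\log(\lambda + M) = O(\lambda)$ uniformly in $x_0$, which delivers the required $O(\lambda\log\lambda)$ rate on $E_\varepsilon$. The entire argument is routine once the rank-one structure is exploited; the only mild subtlety is the restriction to $E_\varepsilon$ in the uniform statement, which is needed precisely because the rational term involves a division by $\|F_{x_0}\|^2$ and thus degenerates at points where $F_{x_0} = 0$.
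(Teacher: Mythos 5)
Your proof is correct and follows essentially the same route as the paper's: Sherman--Morrison for the inverse, the rank-one eigenvalue computation for the determinant, the same algebraic decomposition isolating $\|W\|^2(1-r(x_0)^2)$, and the lower bound $\|F_{x_0}\|^2>\varepsilon$ to make the rational error term uniform. The only (equally valid) difference is how you obtain the uniform upper bound $M$ on $\|F_{x_0}\|^2$ — you bound $F_{t_j}*\varphi_R$ directly via the finite total mass of $F_{t_j}$, whereas the paper argues that $x_0\mapsto\|F_{x_0}\|^2$ is continuous and vanishes outside a compact set.
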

The set $E_{\varepsilon}$ is the set of values of $x_0$ for which the vectors $F_{x_0}$ are uniformly large enough for the Euclidean norm. This is interpreted by saying that the elements $x_0$ of $E_{\varepsilon}$ are potential source positions for which the chosen sensor locations should capture a signal with sufficient $L^2$ energy (at least $\varepsilon$ across all sensors) over the window $[0,T]$, should the source be located at $x_0$. Loosely speaking, such locations $x_0$ are ``visible" candidate source positions. From a covariance perspective, we have that $\rho(K_{x_0}^{reg}) = ||F_{x_0}||_{\mathbb{R}^n}^2$, where $\rho$ denotes the spectral radius.
\begin{remark}
In the proof of Proposition \ref{prop: log lik reg}, the determinant term in \eqref{eq:log lik reg} has no influence in the limit object and only pollutes the rate of convergence. Discarding it leads to a $O_{\lambda \rightarrow 0}(\lambda)$ rate of convergence.
\end{remark}

It also makes sense to inspect the case $N \rightarrow \infty$, which is the content of the next proposition; the obtained limit object is similar to that of Proposition \ref{prop: log lik reg}. The limit $N \rightarrow \infty$ corresponds to having the sampling frequency of the sensors go to infinity. In this case, the discrete objects in Proposition \ref{prop: log lik reg} behave as Riemann sums if the time steps $t_k$ are equally spaced and we obtain integrals in the limit  $N \rightarrow \infty$. Notation wise, we highlight the dependence in $N$ in $\mathcal{L}_{\mathrm{reg}}(x_0,\lambda)$ by noting it instead $\mathcal{L}_{\mathrm{reg}}^N(x_0,\lambda)$.
\begin{proposition}[Asymptotic behaviour of $\mathcal{L}_{\mathrm{reg}}^N(x_0,\lambda)$ when $N \rightarrow \infty$]\label{prop: log lik reg N}
Define the following vector valued functions in $L^2([0,T],\mathbb{R}^q)$: 
\begin{align}
\forall t\in [0,T], \hspace{11pt} I_{\mathrm{w}}(t) &:= \big(\tilde{w}(x_1,t),...,\tilde{w}(x_q,t)\big)^T, \nonumber \\
\forall t \in [0,T], \hspace{7pt} I_{x_0}(t) &:= \big(f_t^{\mathrm{R}}(x_1-x_0),...,f_t^{\mathrm{R}}(x_q-x_0)\big)^T. \nonumber
\end{align}
Denote $||\cdot||_{L^2}$ and $\langle,\rangle_{L^2}$ the norm and the dot product of the usual Euclidean structure of $L^2([0,T],\mathbb{R}^q)$.
Assume that the observations are such that $||I_{\mathrm{w}}||_{L^2} > 0$. Introduce then the correlation function, defined whenever $||I_{x_0}||_{L^2} > 0$:
\begin{align}\label{eq:r infinity}
r_{\infty}(x_0) &:= \frac{\langle I_{\mathrm{w}},I_{x_0} \rangle_{L^2} }{||I_{\mathrm{w}}||_{L^2}||I_{x_0}||_{L^2}}.
\end{align}
Assume that for all $k \in \{1,...,N\}$, $t_k = T (k-1)/(N-1)$, i.e. the $t_k$ are equally spaced in $[0,T]$.
Then for all $x_0$ such that $||I_{x_0}||_{L^2} \neq 0$, we have the following pointwise convergence at $x_0$
\begin{align}
\frac{\lambda}{N}\mathcal{L}_{\mathrm{reg}}^N(x_0,\lambda) \xrightarrow[N \rightarrow \infty]{} ||I_{\mathrm{w}}||_{L^2}^2\big(1-r_{\infty}(x_0)^2\big) + q \lambda\log \lambda.
\end{align}
\end{proposition}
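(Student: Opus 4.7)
The plan is to exploit the rank-one structure of $K_{x_0}^{\mathrm{reg}} = F_{x_0} F_{x_0}^T$ to rewrite $\mathcal{L}_{\mathrm{reg}}^N(x_0,\lambda)$ in closed form, then identify Riemann sums and pass to the limit $N \to \infty$. Since $K_{x_0}^{\mathrm{reg}}$ is rank one, the Sherman--Morrison formula and the matrix determinant lemma yield respectively
$$(K_{x_0}^{\mathrm{reg}} + \lambda I_n)^{-1} = \frac{1}{\lambda} I_n - \frac{F_{x_0} F_{x_0}^T}{\lambda\bigl(\lambda + \|F_{x_0}\|_{\mathbb{R}^n}^2\bigr)}, \qquad \det(K_{x_0}^{\mathrm{reg}} + \lambda I_n) = \lambda^{n-1}\bigl(\lambda + \|F_{x_0}\|_{\mathbb{R}^n}^2\bigr),$$
the latter because the only nonzero eigenvalue of $K_{x_0}^{\mathrm{reg}}$ is $\|F_{x_0}\|_{\mathbb{R}^n}^2$. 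Substituting both identities in \eqref{eq:log lik reg} gives the closed form
$$\mathcal{L}_{\mathrm{reg}}^N(x_0,\lambda) = \frac{\|W\|_{\mathbb{R}^n}^2}{\lambda} - \frac{\langle F_{x_0},W\rangle_{\mathbb{R}^n}^2}{\lambda\bigl(\lambda + \|F_{x_0}\|_{\mathbb{R}^n}^2\bigr)} + (n-1)\log\lambda + \log\bigl(\lambda + \|F_{x_0}\|_{\mathbb{R}^n}^2\bigr).$$

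Multiplying by $\lambda/N$ and using $n = Nq$, the two logarithmic terms produce $q\lambda\log\lambda - (\lambda/N)\log\lambda + (\lambda/N)\log(\lambda + \|F_{x_0}\|_{\mathbb{R}^n}^2)$. The hypothesis $\|I_{x_0}\|_{L^2} > 0$ will imply, via the Riemann-sum identification below, that $\|F_{x_0}\|_{\mathbb{R}^n}^2$ is of order $N$, hence $\log(\lambda + \|F_{x_0}\|_{\mathbb{R}^n}^2) = O(\log N)$ and both correction terms vanish; only $q\lambda\log\lambda$ survives.

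For the quadratic part, setting $a_N := \langle F_{x_0}, W\rangle_{\mathbb{R}^n}/N$ and $b_N := \|F_{x_0}\|_{\mathbb{R}^n}^2/N$, a direct manipulation gives
$$\frac{\lambda}{N}\biggl(\frac{\|W\|_{\mathbb{R}^n}^2}{\lambda} - \frac{\langle F_{x_0},W\rangle_{\mathbb{R}^n}^2}{\lambda\bigl(\lambda + \|F_{x_0}\|_{\mathbb{R}^n}^2\bigr)}\biggr) = \frac{\|W\|_{\mathbb{R}^n}^2}{N} - \frac{a_N^2}{\lambda/N + b_N}.$$
With equally spaced $t_k = T(k-1)/(N-1)$, the three sums $\|W\|_{\mathbb{R}^n}^2 = \sum_{i,j} \tilde w(x_i,t_j)^2$, $\langle F_{x_0},W\rangle_{\mathbb{R}^n} = \sum_{i,j} \tilde w(x_i,t_j)\, f_{t_j}^{\mathrm{R}}(x_i-x_0)$ and $\|F_{x_0}\|_{\mathbb{R}^n}^2 = \sum_{i,j} f_{t_j}^{\mathrm{R}}(x_i-x_0)^2$ are (unnormalized) Riemann sums of $t \mapsto \|I_{\mathrm{w}}(t)\|^2$, $\langle I_{\mathrm{w}}(t), I_{x_0}(t)\rangle$ and $\|I_{x_0}(t)\|^2$ over $[0,T]$; after division by $N$ they converge to $\|I_{\mathrm{w}}\|_{L^2}^2$, $\langle I_{\mathrm{w}}, I_{x_0}\rangle_{L^2}$ and $\|I_{x_0}\|_{L^2}^2$ respectively (with the normalization implicit to the statement). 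Since $\|I_{x_0}\|_{L^2} > 0$, the term $\lambda/N$ is asymptotically negligible against $b_N$, so $a_N^2/(\lambda/N + b_N) \to \langle I_{\mathrm{w}}, I_{x_0}\rangle_{L^2}^2 / \|I_{x_0}\|_{L^2}^2 = \|I_{\mathrm{w}}\|_{L^2}^2\, r_\infty(x_0)^2$ by definition of $r_\infty$; combined with $\|W\|_{\mathbb{R}^n}^2/N \to \|I_{\mathrm{w}}\|_{L^2}^2$ this gives exactly the claimed limit.

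The main (mild) difficulty is to resist expanding $1/(\lambda + \|F_{x_0}\|_{\mathbb{R}^n}^2)$ in $\lambda$; the right move is to keep the denominator intact and use that $b_N$ stays bounded away from $0$, which is precisely where the pointwise hypothesis $\|I_{x_0}\|_{L^2} > 0$ enters. This same potential degeneracy of $b_N$ also explains why the statement is only pointwise on $\{x_0 : \|I_{x_0}\|_{L^2} > 0\}$ rather than uniform on $\mathbb{R}^3$.
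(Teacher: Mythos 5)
Your proof is correct and follows essentially the same route as the paper's: the rank-one Sherman--Morrison/determinant closed form for $\mathcal{L}_{\mathrm{reg}}^N$, identification of $\|W\|_{\mathbb{R}^n}^2$, $\langle F_{x_0},W\rangle_{\mathbb{R}^n}$ and $\|F_{x_0}\|_{\mathbb{R}^n}^2$ as Riemann sums after division by $N$, and passage to the limit using that $\|F_{x_0}\|_{\mathbb{R}^n}^2/N$ stays bounded away from zero when $\|I_{x_0}\|_{L^2}>0$. The $a_N,b_N$ bookkeeping and your remark on the implicit normalization of the Riemann sums are only cosmetic differences.
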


\subsubsection{Discussion: location of the point source} 

Propositions \ref{prop: log lik reg} and \ref{prop: log lik reg N} enable us to explain the patterns observed in Figure \ref{fig: point source} where the correct source position is located at the intersection of spheres centered on receivers.
For that purpose, we analyze the limit term in Proposition  \ref{prop: log lik reg} (the same can be done with the one in Proposition \ref{prop: log lik reg N}). We denote $L(x_0)$ the said limit object from Proposition \ref{prop: log lik reg}:
\begin{align*}
L(x_0) = ||W||_{\mathbb{R}^n}^2\big(1-r(x_0)^2\big) = ||W||_{\mathbb{R}^n}^2\Bigg(1 - \frac{\big(\sum_{i=1}^q \langle F_{x_0}^i,W_i\rangle_{\mathbb{R}^n}\big)^2}{||W||_{\mathbb{R}^n}^2 ||F_{x_0}||_{\mathbb{R}^n}^2}\Bigg).
\end{align*}
Note $T_i$ the time of arrival of the point source wave at sensor $i$: $|x_i - x_0^*| = c^*T_i$. Define also $S_i := S(x_i,cT_i)$, the sphere centered on $x_i$, and $A_i$ the thin spherical shell of thickness $2R$ that surrounds $S_i$, given by $A_i := \overline{B(x_0,cT_i + R) \setminus B(x_0,cT_i- R)}$. Then:
\begin{enumerate}[label=\textit{(\roman*)},wide,labelwidth=0em,labelindent=0pt]
\item  $L(x_0)$ reaches a local minima over the whole sphere $S_i$. 
When $x_0$ is located inside $A_i$, the subvectors $W_i$ and $F_{x_0}^i$ of $W$ and $F_{x_0}$ respectively become almost colinear because $f_t^{\mathrm{R}}$ is radially symmetric. They become exactly colinear when $x_0 \in S_i$. This maximizes the term $\langle F_{x_0}^i,W_i\rangle$ in virtue of the Cauchy-Schwarz inequality. When $x_0$ lies in one and only one of those spherical shells $A_i$, the other terms $\langle F_{x_0}^j,W_j \rangle$ are all zero. 
\item The local minima of $L(x_0)$ located at the intersection of two or more spheres $S_i$ are smaller.
More generally, when $I$ is a subset of $\{1,...,q\}$ and when $x_0 \in \bigcap_{i \in I} A_i \setminus \bigcap_{j \notin I} A_j$, the term $\sum_{i \in I} \langle F_{x_0}^i,W_i \rangle$ is (almost) maximized while $\sum_{j \notin I} \langle F_{x_0}^j,W_j \rangle = 0$, which explains why the intersection of spheres are darker coloured than the other parts of the spheres in Figure \ref{fig: point source}.
\item The spheres $S_i$ intersect at a single point, which is exactly $x_0^*$ as well as the global minima of $L(x_0)$. The quantity $r(x_0)$ reaches a global maximum when all subvectors $W_i$ and $F_{x_0}^i$ are colinear, which is the case only when $x_0 \in \bigcap_i S_i$. When there are at least 4 sensors, the intersection of all the spheres $\bigcap_i S_i$ is reduced to at most one point. Recall that we have assumed that $c = c^*$: this implies that $x_0^* \in \bigcap_i S_i$, and thus the minimum of $L(x_0)$ is located at $x_0 = x_0^*$. 
\end{enumerate}

Note that if the speed $c$ in $k_{x_0}^{\mathrm{R}}$ does not correspond to the real speed $c^*$, the intersection $\bigcap_i S_i$ will be empty. Additionally, from an optimization point of view, numerically solving $\inf_{x_0} \mathcal{L}(x_0,\lambda)$ is obviously highly non convex and none of our numerical experiments lead to the correct solution.

\subsection{Initial condition reconstruction and error bounds}\label{sub:inv_pb}
\subsubsection{Initial condition reconstruction procedure} 
Consider a set of space locations $(x_i)_{1\leq i \leq q}$ and moments $(t_j)_{1 \leq j \leq N}$ (imagine $q$ sensors each collecting measurements at time $t_j$ for all $j$). Consider now the following inverse problem: 
\begin{equation}\label{eq:inverse_pb}
\begin{aligned}
\text{Build an approximation of } &u_0 \text{ and } v_0 \text{ from a finite set of measurements} \\ \{w(x_i,t_j)\}_{i,j}
 \text{ where }& (w, u_0, v_0) \text{ are subject to \eqref{eq:wave_eq}}.
\end{aligned}
\end{equation}
We now show that WIGPR provides an answer to the problem \eqref{eq:inverse_pb}. This is not surprising, because the covariance models described in the previous section were derived by putting GP priors over $u_0$ and $v_0$.

As already observed in Section \ref{subsub:gen_formula}, performing GPR on any data with kernel \eqref{eq:wave kernel} automatically produces a prediction $\tilde{m}$ that verifies $\Box \tilde{m} = 0$ in the sense of distributions. Therefore, this function $\tilde{m}$ is the solution of the Cauchy problem \eqref{eq:wave_eq} for some initial conditions $\tilde{u}_0$ and $\Tilde{v}_0$:
\begin{align}\label{eq:def_tildem_wigpr}
    \Tilde{m}(x,t) &= (F_t * \Tilde{v}_0)(x) + (\Dot{F}_t * \tilde{u}_0)(x).
\end{align}
These initial conditions are simply given by $\tilde{u}_0(x) = \tilde{m}(x,0) $ and $\Tilde{v}_0(x) = \partial_t\tilde{m}(x,0)$. If the data $\{w(x_i,t_j)\}_{i,j}$ on which GPR is performed is comprised of observations of a function $w$ that is another solution of problem \eqref{eq:wave_eq}, the initial conditions $(\tilde{u}_0,\Tilde{v}_0)$ can be understood as approximations of the initial conditions $(u_0,v_0)$ corresponding to $w$. More precisely, following Section \ref{subsub:rkhs}, we have $\tilde{m} = p_F(w)$ and thus
\begin{align}
    \tilde{u}_0(x) &= \Tilde{m}(x,0) = p_F(w)(x,0) &&\forall x \in \mathbb{R}^3, \label{eq:U^0 tilde}\\
    \Tilde{v}_0(x) &= \partial_t\Tilde{m}(x,0) = \partial_t p_F(w)(x,0) = p_F(\partial_t w)(x,0)  &&\forall x \in \mathbb{R}^3, \label{eq:V^0 tilde}
\end{align}
where $F$ denotes the finite dimensional space $\text{Span}(k_{\mathrm{w}}(z_1,\cdot),...,k_{\mathrm{w}}(z_n,\cdot))$ and $p_F$ is the orthogonal projector on $F$ with reference to the Hilbert space structure of $H_{k_{\mathrm{w}}}$. Here, $z_m$ is of the form $z_m = (x_i,t_k) \in \mathbb{R}^4$.
This use of WIGPR provides a flexible framework for tackling the problem \eqref{eq:inverse_pb}, as the sensors are not constrained in number or location by any integration formula such as Radon transforms. Taking a look at equations \eqref{eq:U^0 tilde} and \eqref{eq:V^0 tilde}, we can qualitatively discuss the matter of optimal sensor locations for WIGPR. Indeed, we expect that $\tilde{m}$ will provide a better approximation of $w$ when the functions $k_{\mathrm{w}}(z_i,\cdot)_{i=1,...,n}$ are as orthogonal as possible in $\mathcal{H}_{k_{\mathrm{w}}}$, since $\tilde{m}$ is an orthogonal projection on $F$ with reference to the $\mathcal{H}_{k_{\mathrm{w}}}$ inner product. The optimal situation is when given two different sensors $x_i$ and $x_j$, the following should hold for most times $t_k, t_l$:
\begin{align}\label{eq:ortho_F}
\langle k_{\mathrm{w}}((x_i,t_k),\cdot),k_{\mathrm{w}}((x_j,t_l),\cdot) \rangle_{\mathcal{H}_{k_{\mathrm{w}}}} = k_{\mathrm{w}}((x_i,t_k),(x_j,t_l)) \ll 1.
\end{align}
A close inspection of the explicit covariance expressions (equations $(52)$ and $(53)$ from \cite{hnr_bernoulli}) shows that the property \eqref{eq:ortho_F} can be obtained for most times $t_k$ and $t_l$ when the sensors are far apart from each other, as soon as the kernels $k_{\mathrm{u}}$ and $k_{\mathrm{v}}$ are such that $k(x,x') \longrightarrow 0$ when $|x-x'| \longrightarrow +\infty$ (which is common, see e.g. the kernel \eqref{eq:matern 5/2}). Computing optimal sensor locations and obtaining quantitative guaranties of the accuracy of the reconstruction provided by WIGPR is a hard question left for future research.
\subsubsection{Time-dependent error bounds in terms of the initial condition reconstructions} 
Now that we have showed that WIGPR provides approximations of the initial conditions of \eqref{eq:wave_eq}, we underline the fact that these initial condition reconstructions induce a control of the spatial error between the target function $u$ and the Kriging mean $\tilde{m}$, at all times. Indeed, we have the following $L^p$ control in terms of the initial condition reconstruction error. Given $p \in [1,+\infty]$, denote $W^{1,p}(\mathbb{R}^3)$ the Sobolev space of functions $f\in L^p(\mathbb{R}^3)$ whose weak derivatives $\partial_{x_i}f, 1\leq i\leq d$, exist and lie in $L^p(\mathbb{R}^3)$.
\begin{proposition}\label{prop: Lp control}
For any $p \in [1,+\infty]$ and any pair $v_0 \in L^p(\mathbb{R}^3)$, $u_0 \in W^{1,p}(\mathbb{R}^3)$ we have the following $L^p$ estimates for all $t \in \mathbb{R}$:
\begin{align}
    || F_t * v_0 ||_p &\leq |t| \ ||v_0||_p, \label{eq:estim Lp v_0} \\
    || \Dot{F}_t * u_0 ||_p &\leq ||u_0||_p + C_p c|t| \ ||\nabla u_0||_p, \label{eq:estim Lp u_0} 
\end{align}
where $C_p = \Big(\int_S |\gamma|_q^p\ d\Omega/{4\pi}\Big)^{1/p} \leq 3^{1/q} \leq 3, 1/p + 1/q = 1$ $(C_{\infty} = 1, C_1 \leq 1)$. Assume that the correct speed $c$ is known and plugged in $k_{\mathrm{w}}$, equations \eqref{eq:estim Lp v_0} and \eqref{eq:estim Lp u_0} then lead to the following $L^p$ error estimate between the target $w$ and its approximant $\tilde{m}$:
\begin{align}\label{eq:diff w m tilde}
    ||w(\cdot,t) - \Tilde{m}(\cdot,t)||_p \leq |t| \ ||v_0 - \Tilde{v}_0||_p + ||u_0 - \tilde{u}_0||_p + C_pc|t| \ ||\nabla(u_0 - \tilde{u}_0)||_p,
\end{align}
where $\tilde{u_0}$ and $\tilde{v}_0$ are defined in \eqref{eq:U^0 tilde} and \eqref{eq:V^0 tilde}, and $\tilde{m}$ is given in equation \eqref{eq:def_tildem_wigpr}.
\end{proposition}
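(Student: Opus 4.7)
The plan is to derive the two convolution bounds \eqref{eq:estim Lp v_0}--\eqref{eq:estim Lp u_0} directly from the Kirchhoff formula \eqref{eq:kirschoff}, which gives the pointwise expressions
\begin{align*}
(F_t*v_0)(x) &= t\int_S v_0(x-c|t|\gamma)\,\frac{d\Omega}{4\pi},\\
(\dot F_t*u_0)(x) &= \int_S\bigl[u_0(x-c|t|\gamma) - c|t|\,\gamma\cdot\nabla u_0(x-c|t|\gamma)\bigr]\frac{d\Omega}{4\pi}.
\end{align*}
For \eqref{eq:estim Lp v_0}, I would apply Minkowski's integral inequality to pull the $L^p$ norm inside the sphere integral, then use the translation invariance of $\|\cdot\|_p$ to replace $\|v_0(\cdot-c|t|\gamma)\|_p$ by $\|v_0\|_p$, and observe that $\int_S d\Omega/(4\pi)=1$. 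The factor $|t|$ comes out of the integral unchanged.

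For \eqref{eq:estim Lp u_0}, I would first split via the triangle inequality into the $u_0$ term and the $\gamma\cdot\nabla u_0$ term. The $u_0$ term is bounded by $\|u_0\|_p$ exactly as in the previous step. The gradient term is the delicate one: since $d\Omega/(4\pi)$ is a probability measure on $S$, Jensen's inequality (for $p<\infty$; for $p=\infty$ one just takes the supremum) yields
\begin{align*}
\Bigl|\int_S \gamma\cdot\nabla u_0(x-c|t|\gamma)\,\tfrac{d\Omega}{4\pi}\Bigr|^p \leq \int_S |\gamma\cdot\nabla u_0(x-c|t|\gamma)|^p\,\tfrac{d\Omega}{4\pi}.
\end{align*}
Integrating over $x\in\mathbb R^3$, swapping the order with Fubini, and using translation invariance reduces the estimate to bounding $\int_S\int_{\mathbb R^3}|\gamma\cdot\nabla u_0(y)|^p\,dy\,d\Omega/(4\pi)$. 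A pointwise Hölder inequality in the dot product, $|\gamma\cdot\nabla u_0(y)|\leq |\gamma|_q\,|\nabla u_0(y)|_p$ with $1/p+1/q=1$, then factors out $|\gamma|_q^p$, leaving $\int_S|\gamma|_q^p\,d\Omega/(4\pi)\cdot\|\nabla u_0\|_p^p=C_p^p\|\nabla u_0\|_p^p$. The bound $C_p\leq 3^{1/q}\leq 3$ follows from the elementary norm inequality $|\gamma|_q\leq 3^{1/q}|\gamma|_\infty$ together with $|\gamma|_\infty\leq|\gamma|_2=1$ on $S$.

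Finally, \eqref{eq:diff w m tilde} is obtained by linearity. Since $c$ is assumed correctly specified, $\tilde m$ is the solution of \eqref{eq:wave_eq} with initial data $(\tilde u_0,\tilde v_0)$ given by \eqref{eq:U^0 tilde}--\eqref{eq:V^0 tilde}, and $w$ is the solution with $(u_0,v_0)$. Subtracting, $w-\tilde m$ solves the same wave equation with initial data $(u_0-\tilde u_0,v_0-\tilde v_0)$, and applying \eqref{eq:estim Lp v_0} and \eqref{eq:estim Lp u_0} to this difference gives \eqref{eq:diff w m tilde} immediately.

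The main technical point is really the gradient term: one must execute the Jensen--Fubini--translation--Hölder chain in the right order to land the $L^p$ norm on $\nabla u_0$ while producing the spherical average of $|\gamma|_q^p$ as the constant. The rest of the argument is essentially bookkeeping, once one recognizes that $(w-\tilde m)$ is itself a solution of the Cauchy problem.
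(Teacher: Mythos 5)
Your proposal is correct and follows essentially the same route as the paper: split $\dot F_t*u_0$ into the $u_0$ term and the gradient term, bound the averaging over the sphere using the fact that $d\Omega/4\pi$ is a probability measure, and then combine Fubini, translation invariance of $\|\cdot\|_p$, and the pointwise H\"older bound $|\gamma\cdot\nabla u_0|\le|\gamma|_q\,|\nabla u_0|_p$ to produce the constant $C_p$; the final estimate follows by applying the two bounds to $w-\tilde m$, which solves the same Cauchy problem with data $(u_0-\tilde u_0,\,v_0-\tilde v_0)$. The only cosmetic difference is that you invoke Minkowski's integral inequality for the first estimate where the paper uses Jensen's inequality, and you swap the order of Fubini and H\"older in the gradient term—neither affects the argument.
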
Equations \eqref{eq:estim Lp v_0} and  \eqref{eq:estim Lp u_0} are simple stability estimates for the 3D wave equation, although we have not found them in that form in the literature (notably the explicit control constants $|t|$ and $C_p c|t|$). They fall in the category of Strichartz estimates with $L^p$ control for the space variable and $L^{\infty}$ control for the time variable. We thus provide a proof of Proposition \ref{prop: Lp control}.

Equation \eqref{eq:diff w m tilde} shows that $L^p$ approximations of the initial conditions provide an $L^p$ control between the solution $w$ and the approximation $\tilde{m}$, for any time $t$. This is one reason why in our numerical applications (Section \ref{section: num}), we focus on initial condition reconstruction. 

When $c$ is unknown and estimated by $\hat{c}$ through maximizing the log marginal likelihood, we have instead (highlighting the dependence in $c$ by writing $F_t^c = \sigma_{c|t|}/4\pi c^2 t$)
\begin{align*}
    ||w(\cdot,t) - &\Tilde{m}(\cdot,t)||_p = ||F_t^c * u_0 - F_t^{\hat{c}} * \tilde{u}_0 + \Dot{F}_t^c *v_0 - \Dot{F}_t^{\hat{c}} *\Tilde{v}_0||_p \nonumber \\
    &= ||F_t^c * (u_0 - \tilde{u}_0) + (F_t^c-F_t^{\hat{c}}) * \tilde{u}_0 + \Dot{F}_t^c *(v_0 - \Tilde{v}_0) + (\Dot{F}_t^c- \Dot{F}_t^{\hat{c}}) *\Tilde{v}_0||_p, \nonumber 
\end{align*}    
and thus
\begin{align}
   ||w(\cdot,t) - \Tilde{m}(\cdot,t)||_p \leq &|t| \ ||v_0 - \Tilde{v}_0||_p + ||u_0 - \tilde{u}_0||_p + C_p c|t| \ ||\nabla(u_0 - \tilde{u}_0)||_p\nonumber\\
   & + ||(F_t^c-F_t^{\hat{c}}) * \tilde{u}_0
||_p + ||(\Dot{F}_t^c- \Dot{F}_t^{\hat{c}}) *\Tilde{v}_0||_p.
\end{align}
The terms containing $F_t^c-F_t^{\hat{c}}$ and $\Dot{F}_t^c-\Dot{F}_t^{\hat{c}}$ may be further controlled in terms of $|c - \hat{c}|$ with additional assumptions such as Lipschitz continuity of $u_0$ and $v_0$. Likewise, the quantity $||w(\cdot,t) - \Tilde{m}(\cdot,t)||_p$ may be further controlled if additional assumptions are made on $u_0$ and/or $v_0$. We leave such results to the interested reader. 

\section{Numerical experiments}\label{section: num}
In this section, we showcase WIGPR on functions $w$ that are solutions of Problem \eqref{eq:wave_eq}, using the kernels \eqref{eq:ft ft' compact} and \eqref{eq:ftp ft'p compact} separately as well as together, as in equation \eqref{eq:wave kernel}. The goal is twofold: reconstructing the target function $w$, which more or less amounts to reconstructing its initial conditions (Proposition \ref{prop: Lp control}), and estimating the physical parameters attached. Note that with badly estimated physical parameters, the reconstruction step is more or less bound to fail, especially with inaccurate wave speed $c$ and/or source centers $x_0^{\mathrm{u}}$ and $x_0^{\mathrm{v}}$. 

Running an extensive numerical study of the capabilities and limitations of WIGPR is a large task in itself. For the time being we will settle for simple test cases; in particular we only consider compactly supported and radially symmetric initial conditions, for which we can use the formulas \eqref{eq:ft ft' compact} and \eqref{eq:ftp ft'p compact} which can be evaluated numerically with a low computational cost. We will denote with a star the corresponding true source position $x_0^*$ and celerity $c^*$. whereas their starless counterpart will denote the hyperparameters of the WIGPR kernels. The estimated hyperparameters will be denoted with a hat, e.g. $\hat{c}$.
Two test cases for WIGPR are considered here. A first test case for $k_{\mathrm{u}}^{\mathrm{wave}}$ described in Subsection \ref{sub:GP_pos}, for which $u_0 \neq 0$ and $v_0 = 0$. This would correspond to PAT, though real life PAT test cases would be very unlikely to enjoy radial symmetry properties. A second test case for $k_{\mathrm{u}}^{\mathrm{wave}} + k_{\mathrm{v}}^{\mathrm{wave}}$ described in Subsection \ref{sub:GP_mix}, for which $u_0 \neq 0$ and $v_0 \neq 0$.
For each test case, the full procedure described below is performed.

\paragraph{Numerical simulation and database generation} Given initial conditions $u_0$ and $v_0$, we numerically simulate the solution $w$ over a given time period. We use a basic two step explicit finite difference time domain (FDTD) numerical scheme for the wave equation as described in \cite{bilbao2004}, equation A.24, over the cube $[0,1]^3$.
We also use first order Engquist-Majda transparent boundary conditions \cite{Engquist1977AbsorbingBC}, in order to mimic a full space simulation. We use a sample rate $SR = 200 \ Hz$ (time step $\Delta t = 1/200$ s), a space step $\Delta x = 43 \ mm$, and a wave speed $c^* = 0.5 \ m/s$. The simulation duration is $T = 1.5 \ s$.

30 sensors are scattered in the cube $[0.2,0.8]^3$ using a Latin hypercube repartition and a minimax space filling algorithm. Signal outputs correspond to time series for each sensor, with a sample rate of $50 \ Hz$, so $75$ data points altogether spanned over the time interval $[0, T]$ for each sensor. This leads to $30 \times 75 = 2250$ observations. Each signal is then polluted by a centered Gaussian white noise with standard deviation $\sigma_{\mathrm{noise}} = 0.45$ (resp. $0.09$) for the test case \#1 (resp. test case \#2). These values correspond to around $10\%$ of the maximal amplitude of the signals, see Figures \ref{fig_cos pos sig} and \ref{fig_mix sig}.

\paragraph{Perform WIGPR on simulated data} We perform WIGPR on portions of the dataset obtained above, using the \texttt{kergp} package \cite{citekergp} from R \cite{citeR}. For that we use kernels \eqref{eq:ft ft' compact} and/or \eqref{eq:ftp ft'p compact} which are ``fast" to evaluate, with $K_{\mathrm{v}}$ and $k_{\mathrm{u}}^0$ both 1D $5/2-$Matérn kernels. This Matérn kernel is stationary and writes, in term of the increment $h = x-x'$,
\begin{align}\label{eq:matern 5/2}
k_{5/2}(h) = \sigma^2 \big(1 + {|h|}/{\rho} + {|h|^2}/{3\rho^2} \big)\exp\big(-{|h|}/{\rho} \big).
\end{align}
It has two hyperparameters on its own, $\rho$ and $\sigma^2$. $\rho$ is the length scale of the kernel \eqref{eq:matern 5/2} and should correspond to the typical variation length scale of the function approximated with GPR; $\sigma^2$ is the variance of the kernel. We tackle two different questions related to WIGPR which are respectively the estimation of physical parameters and the sensitivity to sensor locations.
\begin{enumerate}[label=(\subscript{P}{{\arabic*}}),wide,labelwidth=0em,labelindent=0pt]
\item \label{num:multistart} We first study how well the physical parameters $(c^*,x_0^*,R^*)$ can be estimated with WIGPR. For this, we first select $N_s$ time series corresponding to the first $N_s$ sensors with $N_s \in \{3,5,10,15,20,25,30\}$. The corresponding Kriging database contains $75 \times N_s$ data points. For this database, we perform negative log marginal likelihood minimization to estimate the corresponding hyperparameters, which are
\begin{align}\label{eq:def_theta}
\theta = 
\begin{cases}
(x_0^{\mathrm{u}},R_{\mathrm{u}},\theta_{k_{\mathrm{u}}^0},c,\lambda) \in \mathbb{R}^{8} &\text{ if } v_0 = 0 \text{ and } u_0 \neq 0,\\
(x_0^{\mathrm{u}},R_{\mathrm{u}},\theta_{k_{\mathrm{u}}^0},x_0^{\mathrm{v}},R_v,\theta_{k_{\mathrm{v}}^0},c,\lambda) \in \mathbb{R}^{14} &\text{ if } v_0 \neq 0 \text{ and } u_0 \neq 0.
\end{cases}
\end{align}
$\lambda$ corresponds to $\sigma^2$ in Section \ref{subsub:tuning_cov}, and is viewed as an additional hyperparameter in the log marginal likelihood. We use a COBYLA optimization algorithm to optimize $\mathcal{L}(\theta,\lambda)$ and a multistart procedure with $n_{\mathrm{mult}} = 100$ different starting points. That is, $100$ different values of $\theta_0$ are scattered over an hypercube $H \subset \mathbb{R}^8$ or $H \subset \mathbb{R}^{14}$, and the COBYLA log marginal likelihood optimization procedure is run using each value of $\theta_0$ as a starting point. The resulting hyperparameter value providing the minimal negative log marginal likelihood is selected. The multistart procedure mitigates the risk of getting stuck in local maxima. COBYLA is a gradient-free optimization method used in \texttt{kergp} and is available in the \texttt{nloptr} package from R. We then reconstruct the initial conditions using WIGPR, which we evaluate in terms of the indicators in equation \eqref{eq:LP_rel_err}.
\item \label{num:fig_sensibility} Next, we study the sensibility of the reconstruction step with respect to the sensor locations. Consider $40$ different Latin hypercube layouts of the $30$ sensors, each obtained with a minimax space filling algorithm. For each layout, we provide the correct set of hyperparameter values to the model; these values are described in each test case. We then reconstruct the initial conditions using GPR and $N_s$ sensors, with $N_s \in \{3,5,10,15,20,25,30\}$. $L^p$ relative errors (see equation \eqref{eq:LP_rel_err}) are computed between the reconstructed initial condition and the real initial condition. For each number of sensors $N_s$, statistics over the 40 different datasets for these $L^p$ errors are summarized in boxplots (see e.g. Figure \ref{fig_sensib_u_u_L2}). Each box plot shows the median, the first and the third quartiles of a dataset corresponding to results obtained on the 40 different receiver dispositions. The dots inside a circle correspond to the median of each boxplot. The black crosses are the mean of each box plot, which are linked together with the dashed line. The circles are outliers.
\end{enumerate}
In both cases, the approximated initial position $\tilde{u}_0$ is recovered by evaluating the WIGPR Kriging mean at $t = 0$ over a 3D grid and the initial speed $\tilde{v}_0$ is recovered by evaluating the Kriging mean at $t=0$ and $t = \Delta t = 10^{-7}$ over the same 3D grid: $\tilde{v}_0 \simeq (\tilde{m}(\cdot,\Delta t) - \tilde{m}(\cdot,0))/\Delta t$.
Figures are displayed using MATLAB \cite{MATLAB:2020a}.

\paragraph{Numerical indicators} 
For \ref{num:multistart}, we indicate in Tables \ref{table:test_case_1} and \ref{table:test_case_2} the distances between the true physical parameters and the estimated ones, depending on the number of sensors used. Additionally, for every $p \in \{1,2,\infty\}$, we indicate relative $L^p$ reconstruction errors $e_{p,\mathrm{rel}}$ defined below depending on the number of sensors used:
\begin{align}\label{eq:LP_rel_err}
e_{p,\mathrm{rel}}^{\mathrm{u}} = ||u_0-\tilde{u}_0||_p/||u_0||_p\ \ \text{ and } \ \ e_{p,\mathrm{rel}}^{\mathrm{v}} = ||v_0-\tilde{v}_0||_p/||v_0||_p.
\end{align}
A relative error of over $100\%$ means that $||u_0-\tilde{u}_0||_p \geq ||u_0||_p$, in which case the trivial estimator $\hat{u}_0 = 0$ performs better than the estimator $\tilde{u}_0$, in the $L^p$ sense. Note that we deal with three dimensional functions, for which approximation errors are typically larger than for their one dimensional counterpart. Thus, relatively large errors may still correspond to pertinent approximations.
For \ref{num:fig_sensibility} are plotted boxplots of the relative $L^p$ errors over the 40 different sensor layouts, depending on the number of sensors used.
Integrals for the $L^p$ error plots are approximated using Riemann sums over 3D grids containing the support of the integrated functions, with space step $dx = 0.01$.
\begin{figure}[t!]
\begin{subfigure}{\textwidth}
\centering
\includegraphics[width=0.75\textwidth]{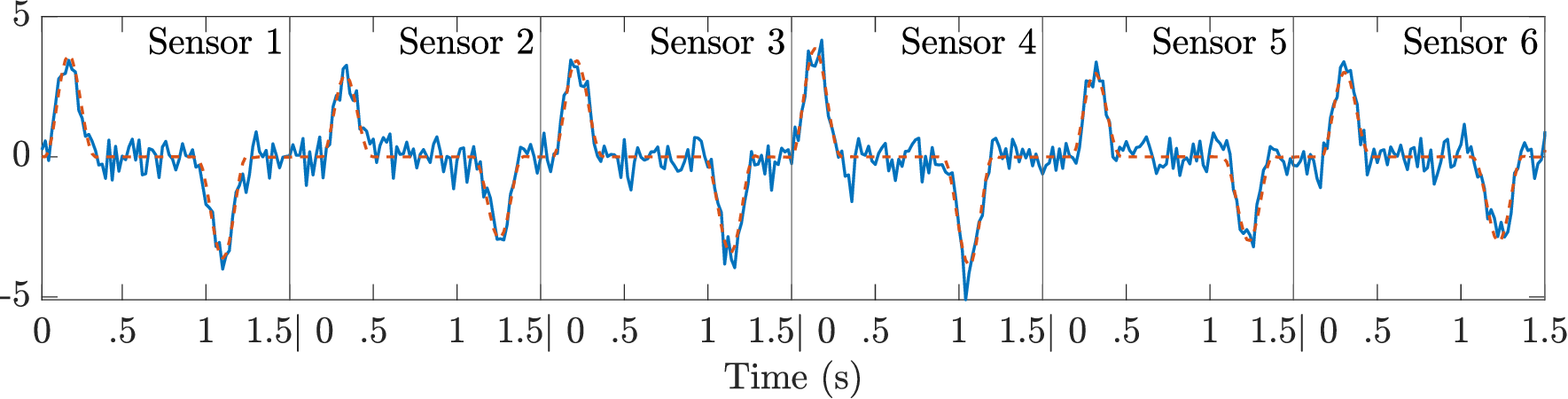}	
\caption{Test case \#1, excerpt of captured signals. Dashed line: noiseless data. Solid line: noisy data.}
\label{fig_cos pos sig}
\end{subfigure}
\vspace{2mm}

\begin{subfigure}{\textwidth}
\centering
\includegraphics[width=0.75\textwidth]{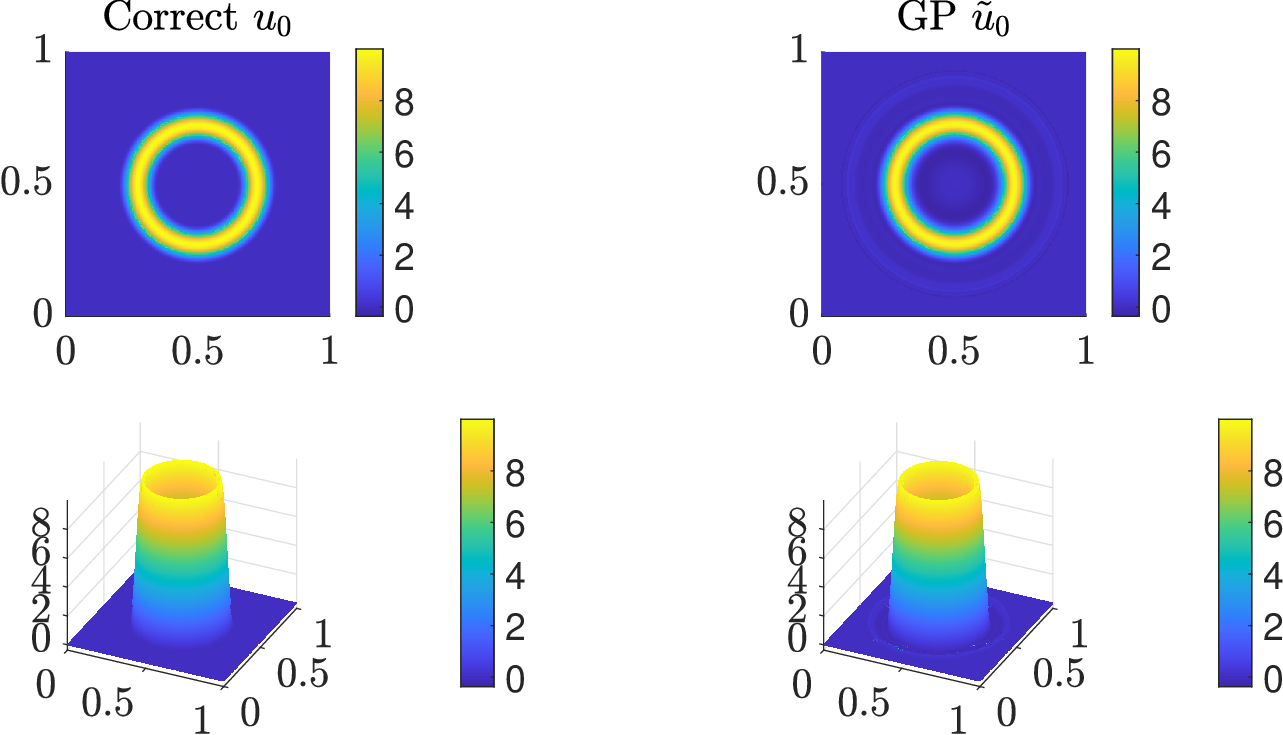}	
\caption{Test case \#1: True $u_0$ (left column) vs WIGPR $u_0$ (right column). 15 sensors were used. The images correspond to the 3D functions evaluated at $z = 0.5$.}
\label{fig_cos pos slice}
\end{subfigure}
\caption{Visualization of signal and WIGPR results for the test case \#1}
\end{figure}

The datasets, the code for generating the datasets and the code for performing WIGPR are available online at \\ 

\centerline{\texttt{https://github.com/iain-pl-henderson/wave-gpr}}

\subsection{Test case for $k_{\mathrm{u}}^{\mathrm{wave}}$}\label{sub:GP_pos}
In this test case, $v_0$ is assumed null and thus we set $k_{\mathrm{v}} = 0$, which yields $k_{\mathrm{v}}^{\mathrm{wave}} = 0$. We thus use $k_{\mathrm{u}}^{\mathrm{wave}}$ defined in \eqref{eq:ftp ft'p compact} for GPR. 
We use the 1D Matérn kernel \eqref{eq:matern 5/2} for $k_{\mathrm{u}}^0$ in equation \eqref{eq:ftp ft'p compact}. The initial condition $u_0$ is a radial ring cosine described as follows. We set $x_0^* = (0.5,0.5,0.5)^T$, $R_1 = 0.15, R_2 = 0.3$ and $A = 5$, the corresponding initial conditions (IC) are given by $v_0(x) = 0$ and
\begin{align*}
    u_0(x) &= A\mathbbm{1}_{[R_1,R_2]}(|x-x_0^*|)\Bigg(1 + \cos\bigg(\frac{2\pi(|x-x_0^*|-\frac{R_1+R_2}{2})}{R_2-R_1}\bigg) \Bigg).
\end{align*}
See Figure \ref{fig_cos pos slice}, left column, for a graphical representation of $u_0$. See Figure \ref{fig_cos pos sig} for an excerpt of the corresponding Kriging database. For problem \ref{num:multistart}, the optimization domain is chosen to be the following hypercube of $\mathbb{R}^8$
\begin{align}\label{eq:dom_optim_1}
\theta &= (x_0,R,\rho,\sigma^2,c,\lambda) \nonumber \\
 &\in [0,1]^3 \times [0.03,0.5] \times [0.02,2] \times [0.1,5]\times [0.2,0.8]\times [10^{-8},1].
\end{align}
For problem \ref{num:fig_sensibility}, the hyperparameter $\theta_0$ provided to the model is
\begin{align}
\theta_0 = (x_0,R,(\rho,\sigma^2),c,\lambda) = ((0.65,0.3,0.5),0.3,(0.2,3),0.5,\sigma_{\mathrm{noise}}^2),
\end{align}
with $\sigma_{\mathrm{noise}}^2 = 0.45^2 = 0.2025$. The value of $0.2$ provided for $\rho$ is a visual estimation of the length scale of $u_0$ based on Figure \ref{fig_cos pos slice}.

\subsubsection{Discussion on the numerical results} For problem \ref{num:multistart}, Table \ref{table:test_case_1} shows that the physical parameters $x_0$ and $c$ are well estimated. The source size parameter $R$ is overestimated, as could be expected from Section \ref{subsub:estimation}. The relative errors show that the overall function reconstruction is overall satisfying, with relative errors below $15\%$ for $N_s = 20, 25$. The noise level $\sigma_{\text{noise}}^2$ (whose estimator is $\hat{\sigma}_{\mathrm{noise}}^2 = \lambda$ in \eqref{eq:def_theta}) is often overestimated. For problem \ref{num:fig_sensibility} (figures \ref{fig_sensib_u_u_L2}, \ref{fig_sensib_u_u_Linf} and \ref{fig_sensib_u_u_L1}), the relative errors stagnate below $10\%$. The IQR (interquartile range, i.e. the difference between the $3^{rd}$ and the $1^{st}$ quartiles) remains below $2\%$. This means that for this test case, the reconstruction step is not very sensitive to the sensors layout when they are scattered as a Latin hypercube.
\vspace{1cm}

\begin{table}[h!]
\centering
\small
\begin{tabular}{|c|c|c|c|c|c|c|c|c|} \hline
$N_{\mathrm{sensors}}$ & 3     & 5     & 10    & 15    & 20    & 25    & 30 & Target   \\ \hline
$|\hat{x_0}-x_0^*|$      & 0.204 & 0.003 & 0.004 & 0.008 & 0.003 & 0.004 & 0.015 & 0 \\
$\hat{R_{\mathrm{u}}}$           & 0.386 & 0.432 & 0.462 & 0.431 & 0.414 & 0.471 & 0.452 & 0.25 \\
$|\hat{c}-c^*|$       & 0.084 & 0.004 & 0.005 & 0.005 & 0.006 & 0.001 & 0.004 & 0\\
$\hat{\sigma}_{\mathrm{noise}}^2$   & 0.917 & 0.879 & 0.93  & 0.99  & 0.361 & 0.988 & 0.377 & 0.2025 \\ \hline
$\hat{\rho}$   & 0.02  & 0.02  & 0.025 & 0.02  & 0.035 & 0.024 & 0.032 & $\sim 0.05$\\
$\hat{\sigma}^2$     & 2.367 & 3.513 & 4.903 & 3.168 & 4.446 & 4.619 & 4.79  & Unknown\\ \hline
$e_{1,\mathrm{rel}}^{\mathrm{u}}$         & 1.275 & 0.157 & 0.128 & 0.168 & 0.11  & 0.103 & 0.248 & 0\\
$e_{2,\mathrm{rel}}^{\mathrm{u}}$         & 1.056 & 0.095 & 0.082 & 0.124 & 0.088 & 0.064 & 0.213 & 0\\
$e_{\infty,\mathrm{rel}}^{\mathrm{u}}$        & 1.037 & 0.132 & 0.128 & 0.198 & 0.136 & 0.101 & 0.321 & 0 \\ \hline
\end{tabular}
\caption{Hyperparameter estimation and relative errors, test case \#1}
\label{table:test_case_1}
\end{table}
\begin{figure}[h!]
\begin{subfigure}[b]{0.45\textwidth}
\includegraphics[width=\textwidth,height=3.95cm,keepaspectratio]{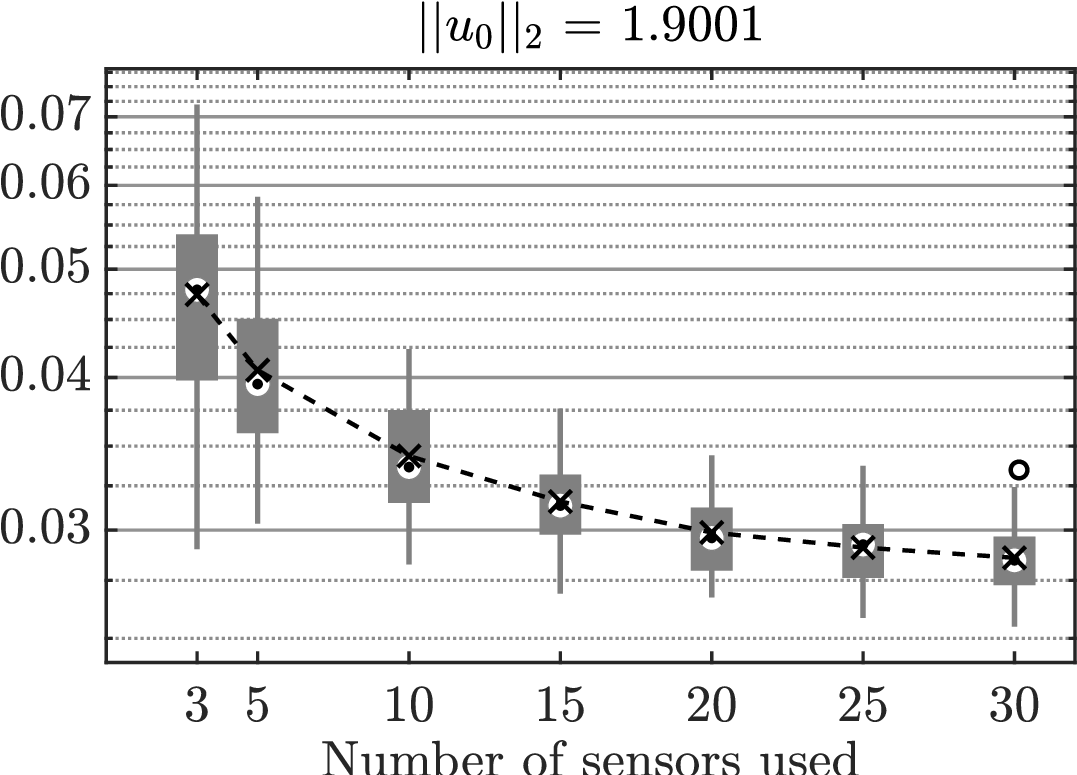}	
\caption{$L^2$ rel. error for $u_0$ (test case \#1)}
\label{fig_sensib_u_u_L2}
\end{subfigure}
\hfill
\begin{subfigure}[b]{0.45\textwidth}
\includegraphics[width=\textwidth,height=3.95cm,keepaspectratio]{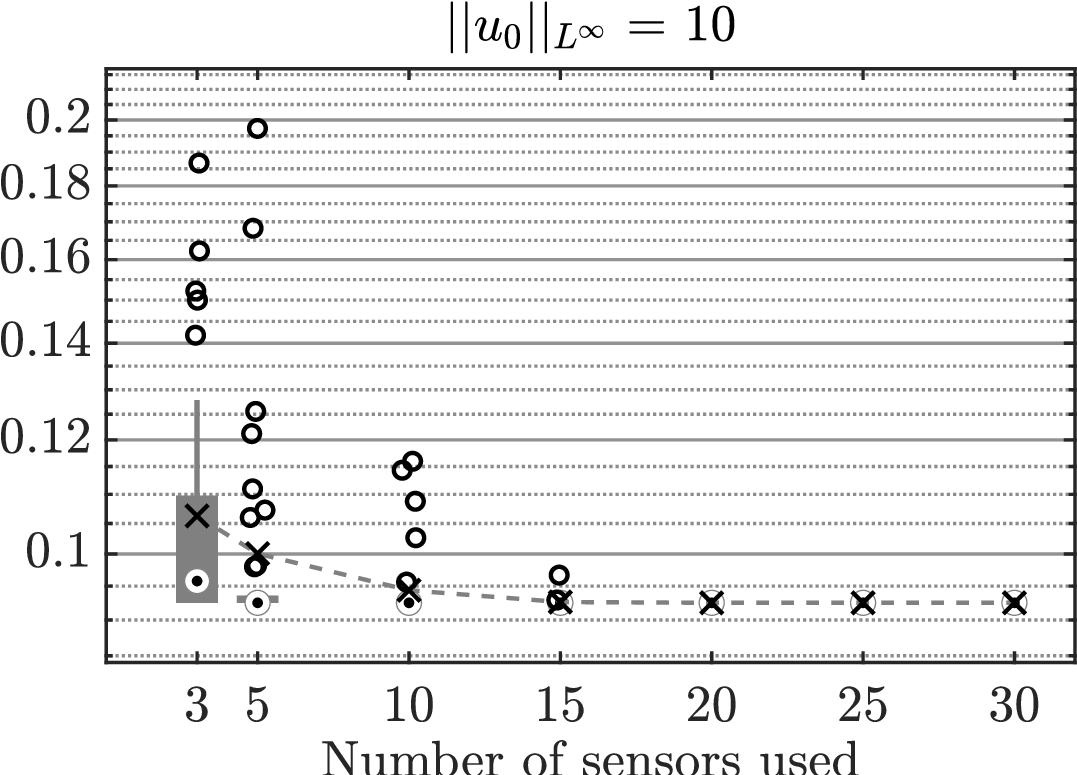}	
\caption{$L^{\infty}$ rel. error for $u_0$ (test case \#1)}
\label{fig_sensib_u_u_Linf}
\end{subfigure}
\centering
\begin{subfigure}[b]{0.45\textwidth}
\includegraphics[width=\textwidth,height=3.95cm,keepaspectratio]{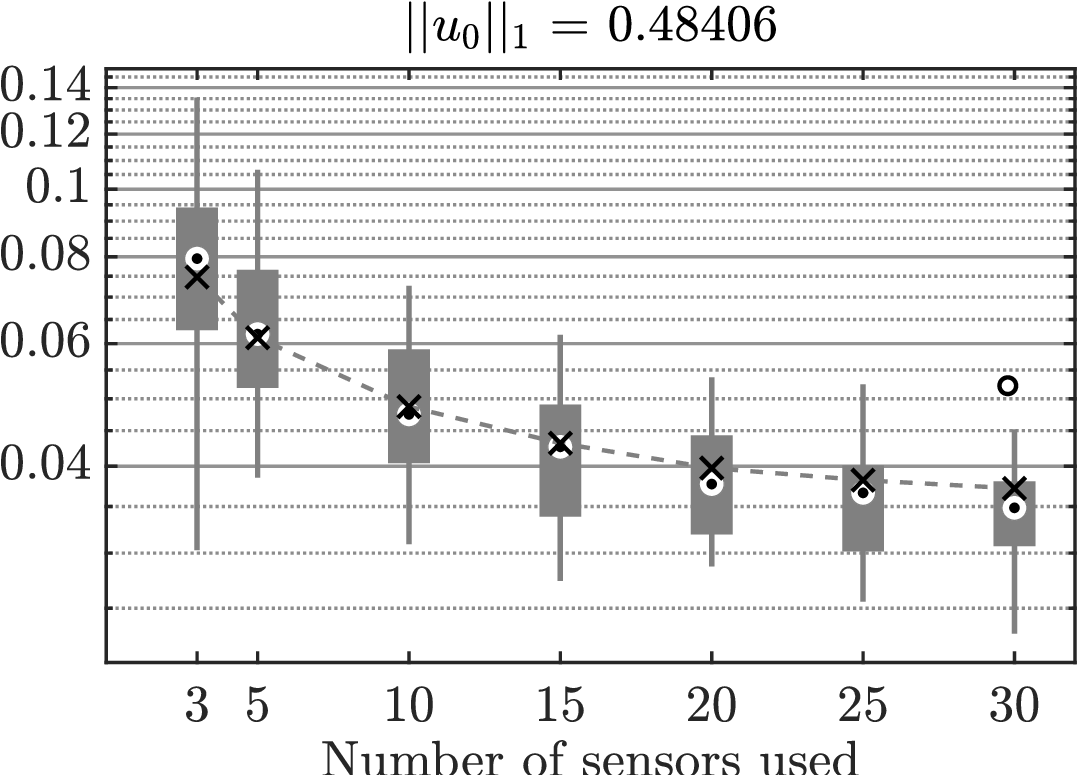}	
\caption{$L^{1}$ rel. error for $u_0$ (test case \#1)}
\label{fig_sensib_u_u_L1}
\end{subfigure}
\caption{Box plots for the sensibility analysis, test case \#1}
\label{sensib_1_and_2}
\end{figure}

\subsection{Test case for $k_{\mathrm{u}}^{\mathrm{wave}} + k_{\mathrm{v}}^{\mathrm{wave}}$}\label{sub:GP_mix}
For this test case, the initial position is a raised cosine, while the initial speed is a ring cosine. We set $x_0^{\mathrm{u}*} = (0.65,0.3,0.5)^T$, $R_u = 0.25$, $A_{\mathrm{u}} = 2.5$, $x_0^{\mathrm{v}*} = (0.3,0.6,0.7)^T$, $R_1^{\mathrm{v}} = 0.05$, $R_2^{\mathrm{v}} = 0.15$ and $A_{\mathrm{v}} = 30$. The corresponding IC are given by
    \begin{align*}
    \begin{cases}
    u_0(x) &= A_{\mathrm{u}}\mathbbm{1}_{[0,R_u]}(|x-x_0^{\mathrm{u}*}|)\Bigg(1 + \cos\bigg(\frac{\pi|x-x_0^{\mathrm{u}*}|}{R_u}\bigg) \Bigg), \\
    v_0(x) &= A_{\mathrm{v}}\mathbbm{1}_{[R_1^{\mathrm{v}},R_2^{\mathrm{v}}]}(|x-x_0^{\mathrm{v}*}|)\Bigg(1 + \cos\bigg(\frac{2\pi\big(|x-x_0^{\mathrm{v}*}|-\frac{R_1^{\mathrm{v}}+R_2^{\mathrm{v}}}{2}\big)}{R_2^{\mathrm{v}}-R_1^{\mathrm{v}}}\bigg) \Bigg).
    \end{cases}
\end{align*}
See Figures \ref{fig_mix pos slice} and \ref{fig_mix spd slice}, left columns, for graphical representations of $u_0$ and $v_0$. See Figure \ref{fig_mix sig} for a visualization of the database. For problem \ref{num:multistart}, the optimization domain is chosen to be the following hypercube
\begin{align}\label{eq:dom_optim_3}
\theta = &(x_0^{\mathrm{u}},R_u,(\rho_{\mathrm{u}},\sigma_{\mathrm{u}}^2),x_0^{\mathrm{v}},R_{\mathrm{v}},(\rho_{\mathrm{v}},\sigma_{\mathrm{v}}^2),c,\lambda)\nonumber \\
 \in &[0,1]^3 \times [0.05,0.4]\times [0.02,2] \times [0.1,5] \nonumber \\
 \times &[0,1]^3\times [0.05,0.4]\times [0.02,2] \times [0.1,5]\times [0.2,0.8] \times [10^{-8},2\times 10^{-2}].
\end{align}
For problem \ref{num:fig_sensibility}, the hyperparameter value $\theta_0$ provided to the model is
\begin{align}
\theta_0 = ((0.65,0.3,0.5),0.3,(0.06,3),(0.3,0.6,0.7),0.15,(0.025,3.5),0.5,\sigma_{\mathrm{noise}}^2),
\end{align}
with $\sigma_{\mathrm{noise}}^2 = 0.0081$. The provided values for $(\rho_{\mathrm{u}},\sigma_{\mathrm{u}}^2)$ and $(\rho_{\mathrm{v}},\sigma_{\mathrm{v}}^2)$ are the estimated values from \ref{num:multistart}.

\subsubsection{Discussion of the numerical results} Table \ref{table:test_case_2} shows that  the physical parameters $x_0^{\mathrm{u}}$, $x_0^{\mathrm{v}}$ and $c$ are well estimated. The source radii $R_{\mathrm{u}}$ and $R_{\mathrm{v}}$ are overestimated, as expected from Section \ref{subsub:estimation}. The noise level $\sigma_{\text{noise}}^2$ is generally overestimated. The reconstruction of the initial position $u_0$ yielded satisfactory results with $L^2$ and $L^{\infty}$ relative errors below $25\%$, and an $L^1$ relative error below $35\%$ ($N_s = 10, 15, 20, 25, 30$).
The higher $L^1$ relative error means that the reconstructed function $\tilde{u}_0$ is supported on a larger set than the true function $u_0$, as the $L^1$ norm favours sparsity. For the initial speed $v_0$, the numerical indicators are not as good, reaching minimal values for $N_s = 25$. The corresponding errors for the $L^1$, $L^2$ and $L^{\infty}$ errors are $64\%$, $28\%$ and $64\%$ respectively. Note though that Figure \ref{fig_mix spd slice} (corresponding to $N_s = 20$) shows that WIGPR still managed to capture the ring structure of $v_0$; the corresponding $L^1$ error for $N_s = 20$ is $150\%$ (Table \ref{table:test_case_2}), confirming that the misestimated support radius $R_{\mathrm{v}}$ is heavily penalized by the $L^1$ norm. The reconstruction of $v_0$ for $N_s = 30$ failed (Table \ref{table:test_case_2}).
For problem \ref{num:fig_sensibility}, the numerical indicators are better. For $u_0$, Figures \ref{fig_sensib_mix_L2_u}, \ref{fig_sensib_mix_Linf_u} and \ref{fig_sensib_mix_L1_u}  show that relative error medians stagnate below $5\%$ for $N_s \geq 15$. The corresponding IQR are around $2\%$. For $v_0$ (Figures \ref{fig_sensib_mix_L2_v}, \ref{fig_sensib_mix_Linf_v} and \ref{fig_sensib_mix_L1_v}), the $L^1$, $L^2$ and $L^{\infty}$ relative error medians stagnate at $30\%, 25\%$ and $40\%$ respectively. The corresponding IQR stagnate at $10\%, 5\%$ and $10\%$ respectively.
\vspace{2cm}

\begin{table}[h!]
\centering
\small
\begin{tabular}{|c|c|c|c|c|c|c|c|c|}\hline
$N_{\mathrm{sensors}}$ & 3      & 5      & 10     & 15     & 20     & 25     & 30  & Target \\ \hline  
$|\hat{x}_0^{\mathrm{u}}-{x_0^{\mathrm{u}}}^*|$      & 0.163  & 0.144  & 0.013  & 0.024  & 0.023  & 0.033  & 0.015 & 0   \\
$\hat{R}_{\mathrm{u}}$          & 0.4    & 0.274  & 0.384  & 0.309  & 0.352  & 0.286  & 0.313   &  0.25   \\
$|\hat{x}_0^{\mathrm{v}}-{x_0^{\mathrm{v}}}^*|$       & 0.163  & 0.18   & 0.035  & 0.028  & 0.037  & 0.006  & 0.05   & 0   \\
$\hat{R}_{\mathrm{v}}$          & 0.252  & 0.166  & 0.313  & 0.356  & 0.348  & 0.266  & 0.339  &  0.15   \\ 
$|\hat{c}-c^*|$       & 0.165  & 0.156  & 0.028  & 0.036  & 0.042  & 0.011  & 0.04 &  0  \\
$\hat{\sigma}_{\mathrm{noise}}^2$  & 0.0178 & 0.0184 & 0.0188 & 0.0161 & 0.0187 & 0.0145 & 0.0116  & 0.0081 \\ \hline
$\hat{\rho}_{\mathrm{u}}$     & 0.034  & 0.069  & 0.102  & 0.027  & 0.031  & 0.061  & 0.034 & $\sim 0.05$    \\
$\hat{\sigma}_{\mathrm{u}}^2$      & 4.649  & 4.472  & 4.575  & 2.493  & 0.678  & 3.272  & 2.541  & Unknown  \\
$\hat{\rho}_{\mathrm{v}}$     & 0.057  & 0.027  & 0.044  & 0.053  & 0.085  & 0.022  & 0.012  & $\sim 0.02$   \\
$\hat{\sigma}_{\mathrm{v}}^2$      & 3.91   & 2.538  & 3.05   & 1.545  & 4.886  & 3.575  & 4.346 &  Unknown  \\
\hline
$e_{1,\mathrm{rel}}^{\mathrm{u}}$       & 2.414  & 1.676  & 0.243  & 0.311  & 0.358  & 0.315  & 0.317  &  0   \\
$e_{2,\mathrm{rel}}^{\mathrm{u}}$       & 1.276  & 1.053  & 0.174  & 0.223  & 0.228  & 0.261  & 0.205  &  0   \\
$e_{\infty,\mathrm{rel}}^{\mathrm{u}}$       & 0.732  & 0.608  & 0.136  & 0.174  & 0.231  & 0.212  & 0.228  &  0   \\
$e_{1,\mathrm{rel}}^{\mathrm{v}}$       & 2.865  & 2.796  & 1.315  & 1.42   & 1.51   & 0.645  & 9.784  &  0   \\
$e_{2,\mathrm{rel}}^{\mathrm{v}}$       & 1.492  & 1.812  & 0.694  & 0.616  & 0.736  & 0.284  & 35.75 &  0    \\
$e_{\infty,\mathrm{rel}}^{\mathrm{v}}$       & 1.083  & 1.608  & 0.817  & 0.763  & 0.845  & 0.635  & 2416.682 &  0 \\ \hline
\end{tabular}
\caption{Hyperparameter estimation and relative errors, test case \#2}
\label{table:test_case_2}
\end{table}
\begin{figure}[h!]
\begin{subfigure}[b]{\textwidth}
\centering
\includegraphics[width=0.90\textwidth]{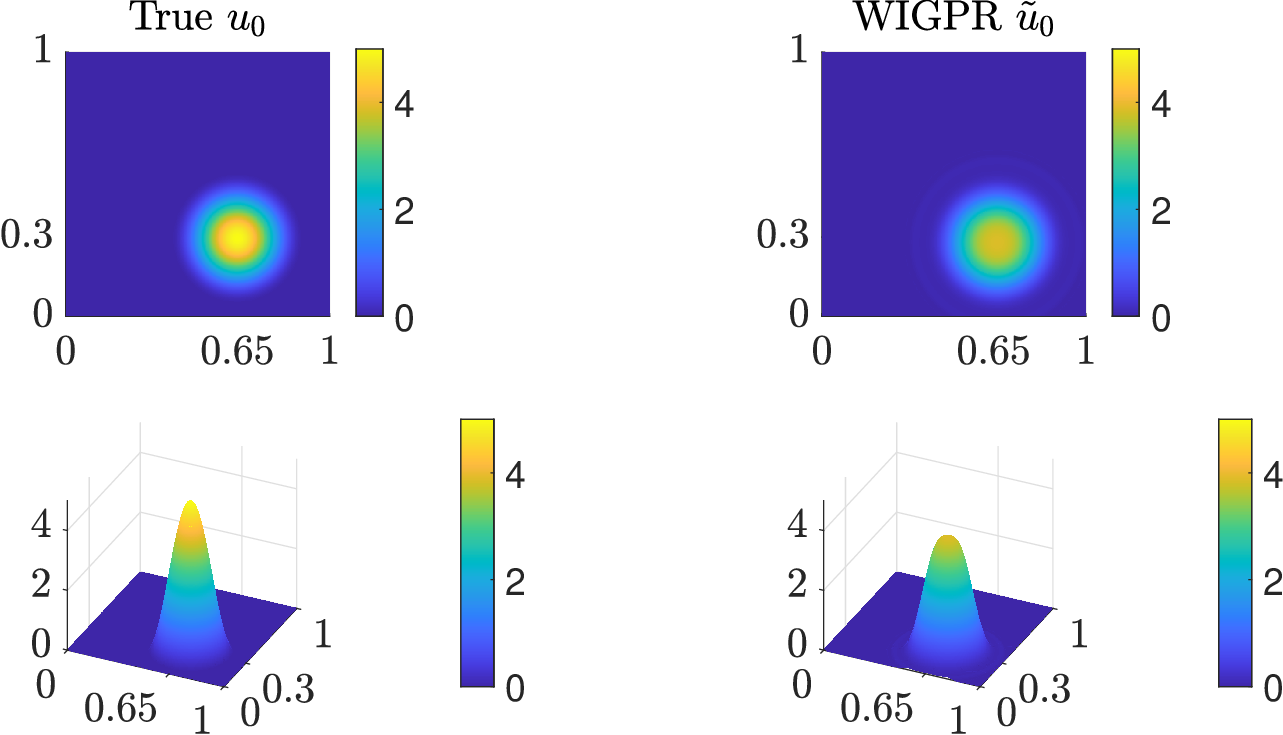}	
\caption{True $u_0$ vs WIGPR $u_0$. The images correspond to the 3D solutions evaluated at $z = 0.5$.}
\label{fig_mix pos slice}
\end{subfigure}
\vspace{0mm}
\begin{subfigure}{\textwidth}
\centering
\includegraphics[width=0.90\textwidth]{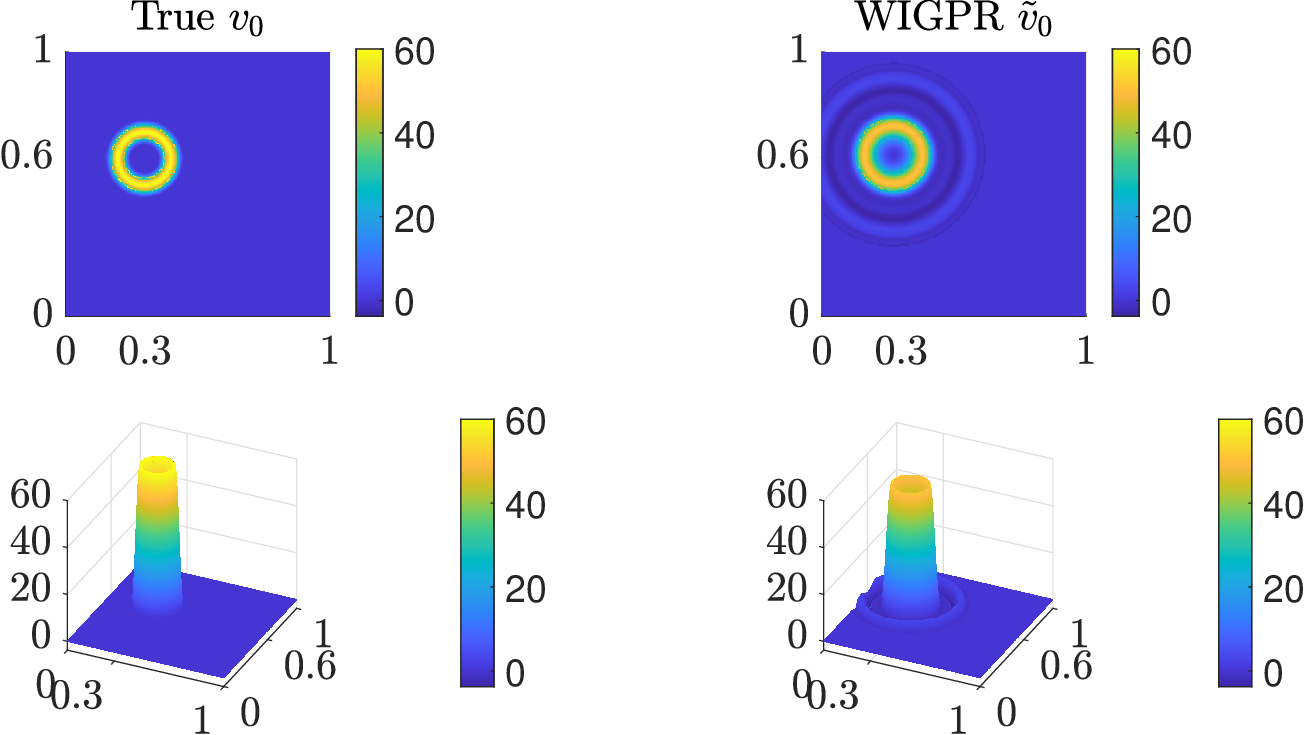}	
\caption{True $v_0$ vs WIGPR $v_0$. The images correspond to the 3D solutions evaluated at $z = 0.7$.}
\label{fig_mix spd slice}
\end{subfigure}
\vspace{0mm}
\caption{Test case \#2: top and lateral view of the reconstructions of $u_0$ (Figure \ref{fig_mix pos slice}) and $v_0$ (Figure \ref{fig_mix spd slice}) provided by WIGPR, in comparison with $u_0$ and $v_0$. Left columns: true IC. Right columns: WIGPR IC reconstructions. 20 sensors were used.}
\label{fig:visu}
\end{figure}

\begin{figure}
\centering
\includegraphics[width=0.7\textwidth]{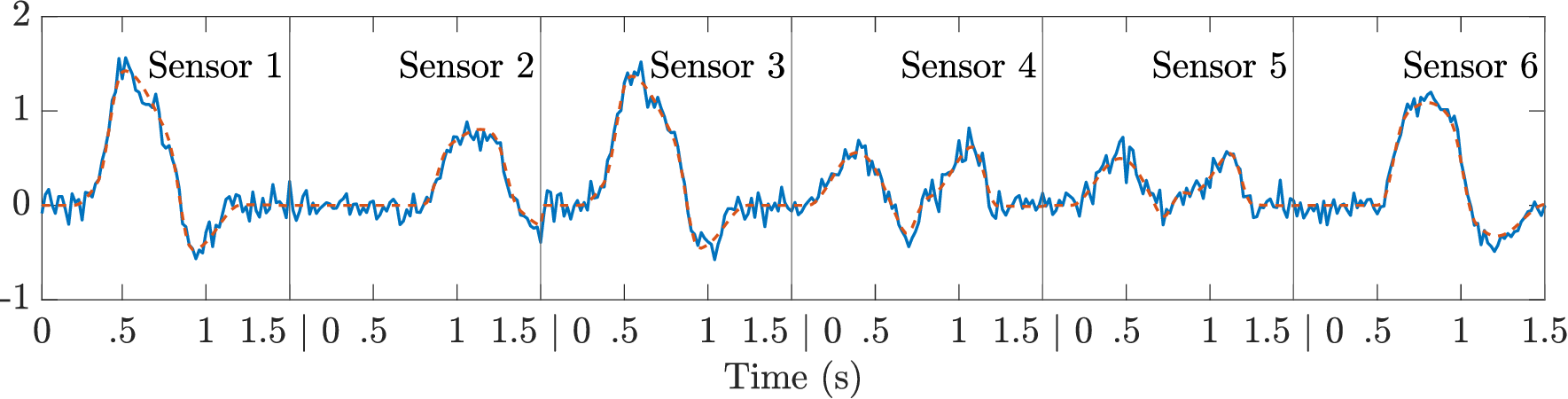}	
\caption{Test case \#2, excerpt of captured signals. Dashed line: noiseless data. Solid line: noisy data.}
\label{fig_mix sig}
\end{figure}

\begin{figure}[h!]

\begin{subfigure}[b]{0.4\textwidth}
\includegraphics[width=\textwidth,height=3.95cm,keepaspectratio]{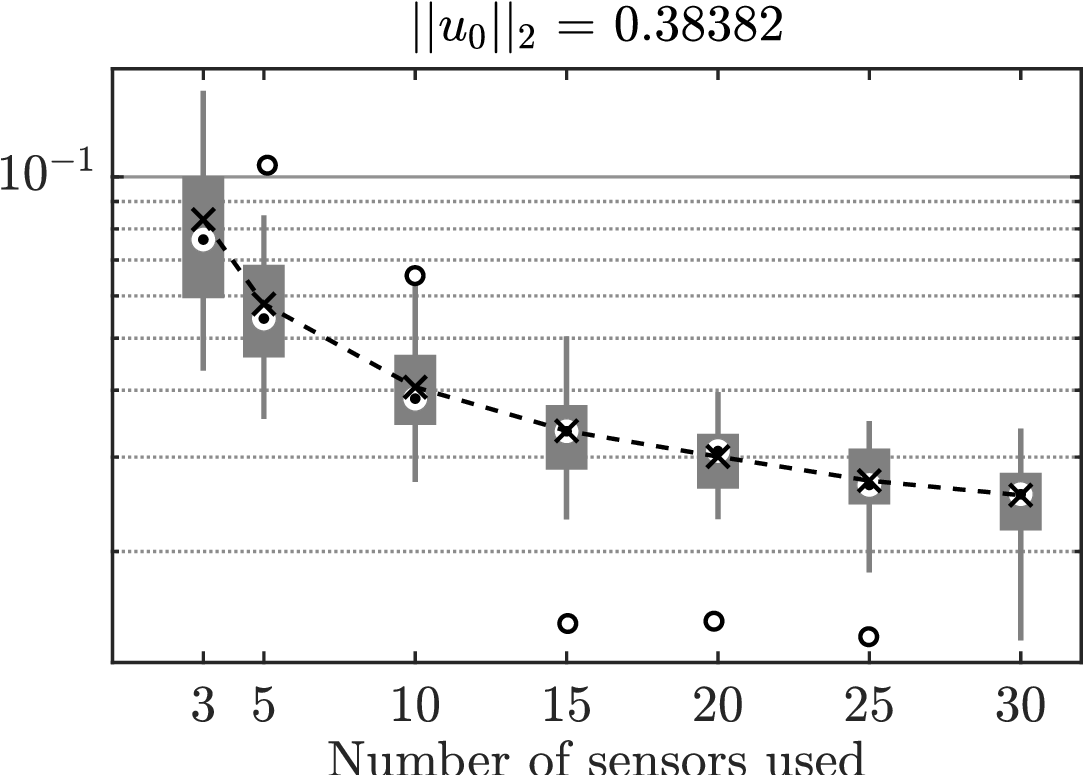}	
\caption{$L^2$ rel. error for $u_0$ (test case \#2)}
\label{fig_sensib_mix_L2_u}
\end{subfigure}
\hfill
\begin{subfigure}[b]{0.4\textwidth}
\includegraphics[width=\textwidth,height=3.95cm,keepaspectratio]{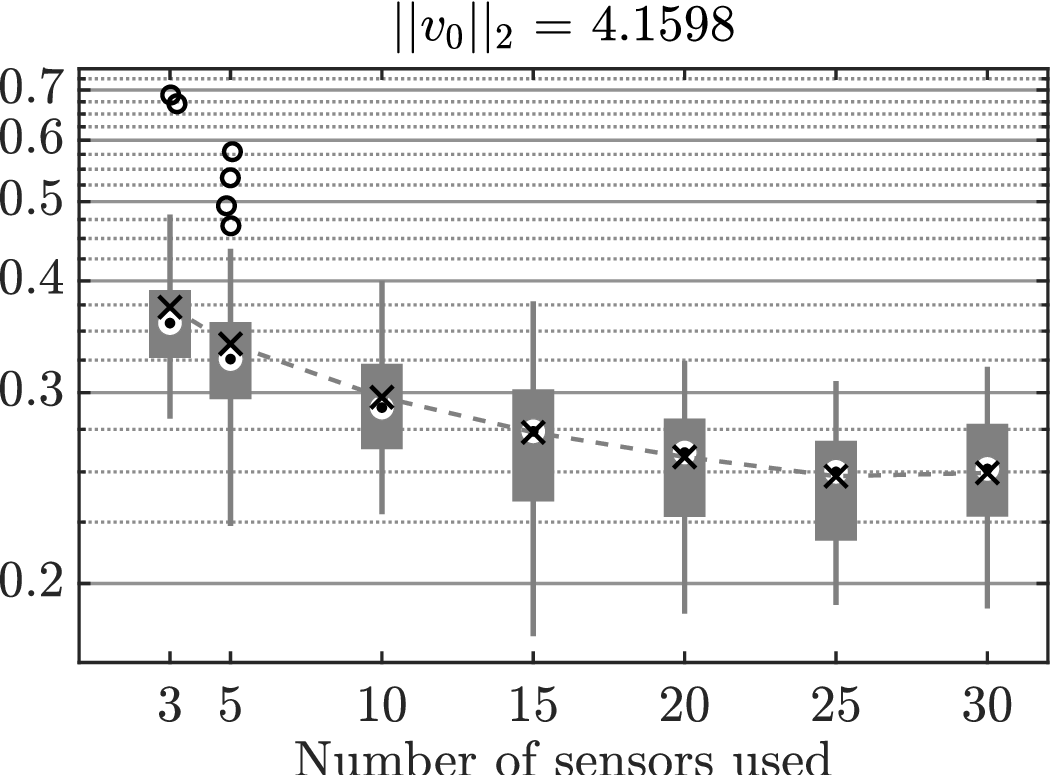}
\caption{$L^2$ rel. error for $v_0$ (test case \#2)}	
\label{fig_sensib_mix_L2_v}
\end{subfigure}\\

\vspace{5mm}

\begin{subfigure}[b]{0.4\textwidth}
\includegraphics[width=\textwidth,height=3.95cm,keepaspectratio]{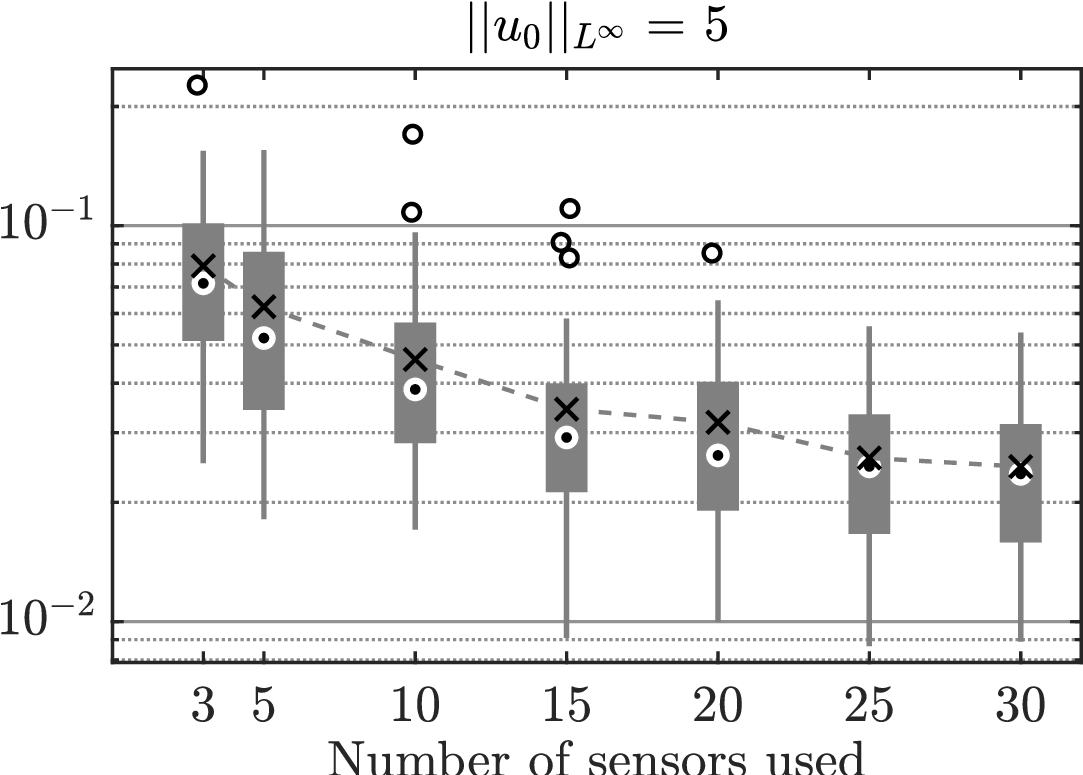}
\caption{$L^{\infty}$ rel. error for $u_0$ (test case \#2)}	
\label{fig_sensib_mix_Linf_u}
\end{subfigure}
\hfill
\begin{subfigure}[b]{0.4\textwidth}
\includegraphics[width=\textwidth,height=3.95cm,keepaspectratio]{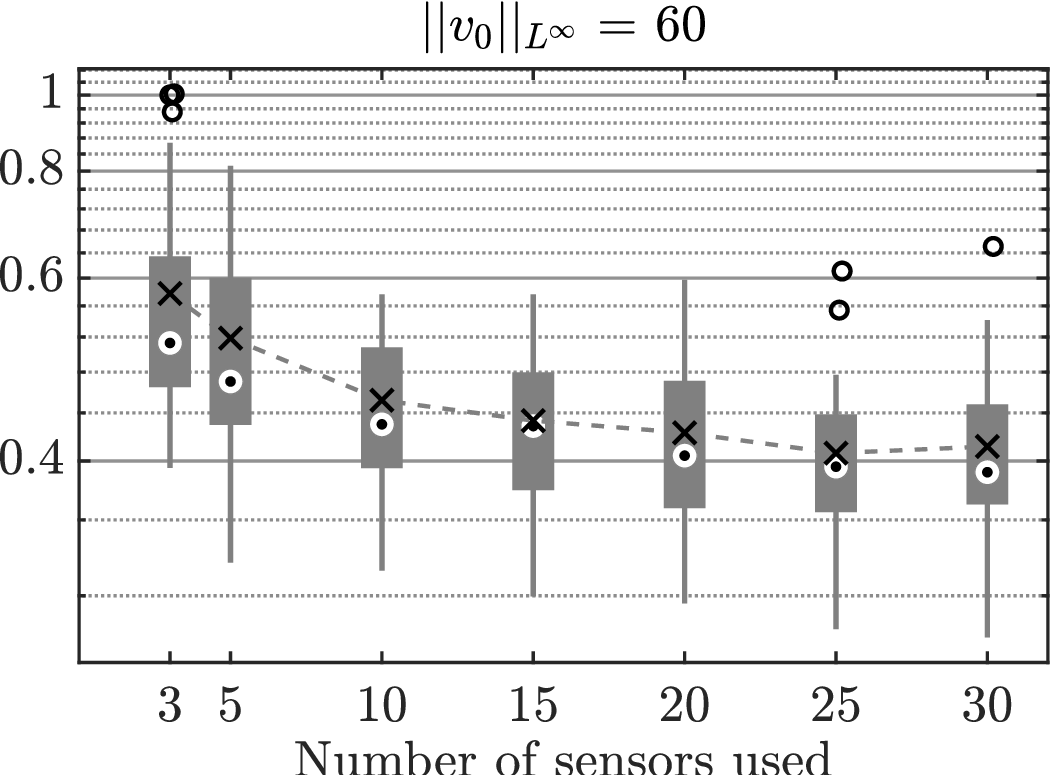}	
\caption{$L^{\infty}$ rel. error for $v_0$ (test case \#2)}	
\label{fig_sensib_mix_Linf_v}
\end{subfigure}\\

\vspace{5mm}

\begin{subfigure}[b]{0.4\textwidth}
\includegraphics[width=\textwidth,height=3.95cm,keepaspectratio]{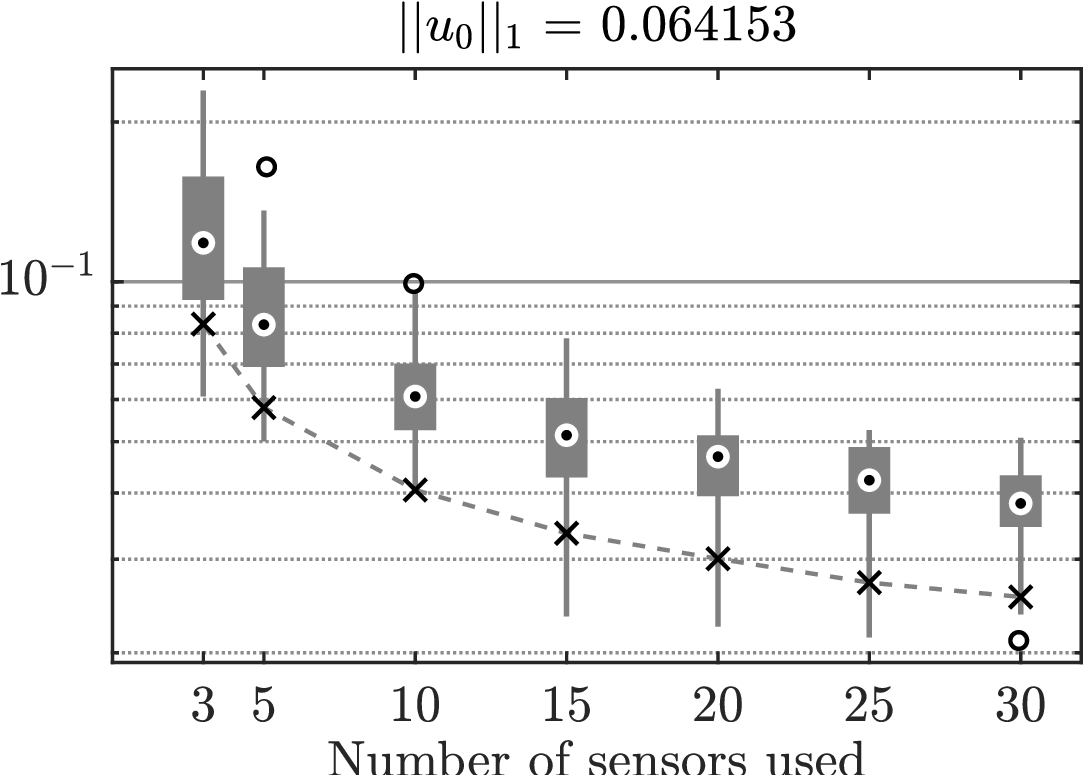}	
\caption{$L^1$ rel. error for $u_0$ (test case \#2)}
\label{fig_sensib_mix_L1_u}
\end{subfigure}
\hfill
\begin{subfigure}[b]{0.4\textwidth}
\includegraphics[width=\textwidth,height=3.95cm,keepaspectratio]{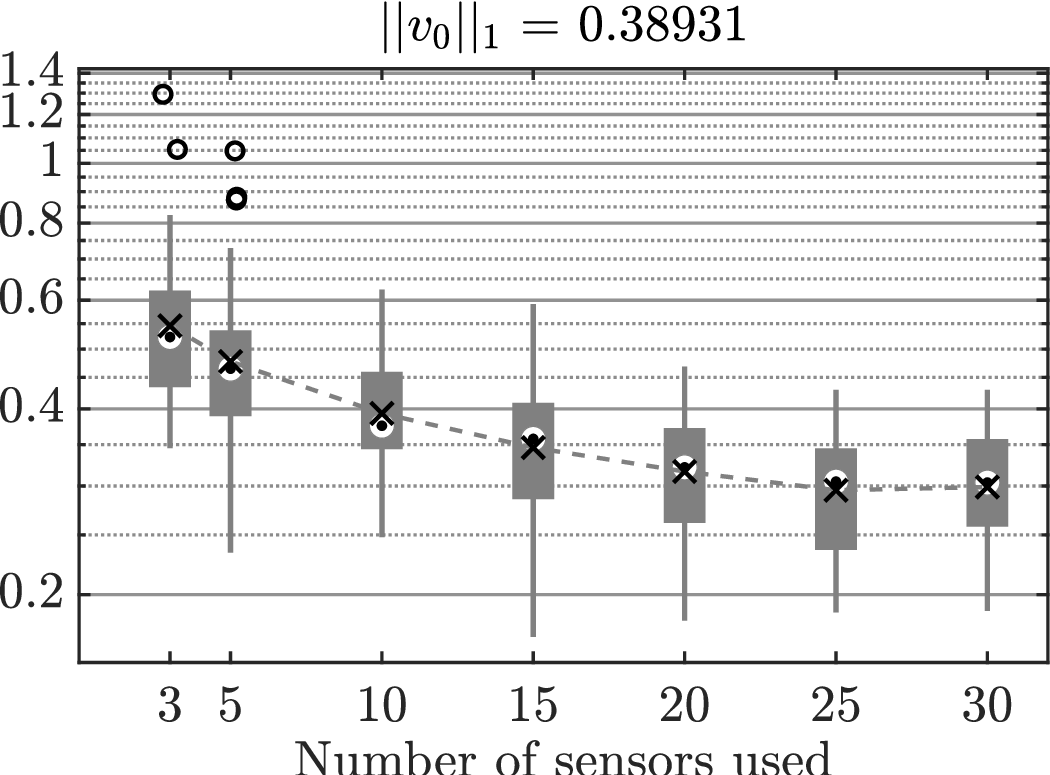}	
\caption{$L^1$ rel. error for $v_0$ (test case \#2)}
\label{fig_sensib_mix_L1_v}
\end{subfigure}

\caption{Box plots for the sensibility analysis, test case\#2}
\label{sensib_3}
\end{figure}

\clearpage

\section{Conclusion and perspectives}

In Section \ref{section:GP_wave}, we described several covariance models tailored to the wave equation; they are particular cases of general ones first derived in a previous work. They correspond to the cases where either wide sense stationarity or radial symmetry assumptions over the initial conditions hold. In addition, the sample paths of the associated random fields (not necessarily Gaussian) are a.s. solution to the homogeneous wave equation. These covariances fully specify centered Gaussian process priors, which can then be used in the context of Gaussian process regression (WIGPR). In that framework, the physical parameter of the PDE system (e.g. source location or wave celerity) can be interpreted as hyperparameters of the WIGPR prior, as in \cite{raissi2017}. 
We then showed that in the limit of the small source radius, the multilateration method for point source localization was naturally recovered by the hyperparameter estimation step of WIGPR.
We furthermore showed that WIGPR naturally provides a reconstruction of the initial conditions of the wave equation, as should be expected when putting probability priors over them. 

The radial symmetry WIGPR formulas from Section 3 were then showcased in Section 4, where two practical questions were tackled. First, WIGPR can correctly estimate certain physical parameters attached to the corresponding wave equation, namely the wave speed and source position. When these parameters are well estimated, WIGPR is capable of providing non trivial reconstructions of the initial condition, which we studied in terms of $L^1$, $L^2$ and $L^{\infty}$ relative errors. We furthermore observed that the reconstruction step was not very sensitive to the layout of the sensors, assuming that the correct set of hyperparameters is provided to the model.

Future possible investigations concern the practical use of the more general formula \eqref{eq:wave kernel} without any radial symmetry assumptions, e.g. for PAT applications. To compute the convolutions efficiently, one may then resort to multidimensional fast Fourier transforms. Moreover, in this first study, we have only used simple methods for GP numerical evaluation. More advanced GP techniques such as inducing points \cite{rasmussen_inducing} should now be used to handle large datasets such as the ones we have used in Section 4. 
The case of the two dimensional wave equation is also of practical interest, e.g. in oceanography \cite{lannes2D}, and presents many different properties when compared to its 3D counterpart (\cite{evans1998}, p. 80). It would thus deserve a theoretical and practical study in its own right when coupled with GPR. 

Finally, the surprising link drawn between our GPR method and the multilateration localization method suggests that other very explicit links should exist between well-chosen kernel methods and traditional mathematical or numerical methods tailored to given physical models. This is certainly an important direction of research, where GPR stands out as a favourable environment through which the communities of machine learning and mathematical physics may be brought together.

\subsubsection*{Acknowledgements} Research of all the authors was supported by SHOM (Service Hydrographique et Océanographique de la Marine) project ``Machine Learning Methods in Oceanography'' no-20CP07. The authors thank Rémy Baraille in particular for his personal involvement in the project. 

\subsubsection*{Declaration of competing interests} The authors declare that they have no known competing financial interests or personal
relationships that could have appeared to influence the work reported in this paper.

\bibliographystyle{abbrv}
\bibliography{bibliography}

\begin{thebibliography}{10}

\bibitem{albert2020}
C.~G. Albert and K.~Rath.
\newblock {G}aussian process regression for data fulfilling linear differential
  equations with localized sources.
\newblock {\em Entropy}, 22(2), 2020.

\bibitem{alvarado2014}
P.~A. Alvarado, M.~A. Alvarez, G.~Daza-Santacoloma, A.~Orozco, and
  G.~Castellanos-Dominguez.
\newblock A latent force model for describing electric propagation in deep
  brain stimulation: A simulation study.
\newblock In {\em 36th Annu. Conf. Proc. IEEE Eng. Med. Biol. Soc.}, pages
  2617--2620, 2014.

\bibitem{alvarez2013}
M.~{\'A}lvarez, D.~Luengo, and N.~Lawrence.
\newblock Linear latent force models using {G}aussian processes.
\newblock {\em IEEE Trans. Pattern Anal. Mach. Intell.}, 35:2693--2705, 2013.

\bibitem{ammari2012}
H.~Ammari, editor.
\newblock {\em Mathematical modeling in biomedical imaging {II}. Optical,
  ultrasound, and opto-acoustic tomographies}, volume 2035 of {\em Lecture
  Notes in Mathematics}.
\newblock Springer, Berlin, Heidelberg, 2012.

\bibitem{anastasio_2007}
M.~A. Anastasio, J.~Zhang, D.~Modgil, and P.~J. La~Rivi\`ere.
\newblock Application of inverse source concepts to photoacoustic tomography.
\newblock {\em Inverse Problems}, 23(6):S21--S35, 2007.

\bibitem{agnan2004}
A.~Berlinet and C.~Thomas-Agnan.
\newblock {\em Reproducing Kernel {H}ilbert Spaces in Probability and
  Statistics}.
\newblock Springer US, 2004.

\bibitem{bilbao2004}
S.~Bilbao.
\newblock {\em Wave and Scattering Methods for Numerical Simulations}.
\newblock John Wiley \& Sons, Ltd, 2004.

\bibitem{bogachev1998gaussian}
V.~I. Bogachev.
\newblock {\em {G}aussian measures}.
\newblock Number~62 in Mathematical Surveys and Monographs. American
  Mathematical Soc., 1998.

\bibitem{CHEN_owhadi_2021}
Y.~Chen, B.~Hosseini, H.~Owhadi, and A.~M. Stuart.
\newblock Solving and learning nonlinear {PDE}s with {G}aussian processes.
\newblock {\em J. Comput. Phys.}, 447:Paper No. 110668, 29, 2021.

\bibitem{dashti_stuart_pde}
S.~L. Cotter, M.~Dashti, and A.~M. Stuart.
\newblock Approximation of {B}ayesian inverse problems for {PDE}s.
\newblock {\em SIAM J. Numer. Anal.}, 48(1):322--345, 2010.

\bibitem{dalang_2009}
R.~C. Dalang and M.~Sanz-Sol\'{e}.
\newblock H\"{o}lder-{S}obolev regularity of the solution to the stochastic
  wave equation in dimension three.
\newblock {\em Mem. Amer. Math. Soc.}, 199(931):vi+70, 2009.

\bibitem{dashti2013map}
M.~Dashti, K.~J. Law, A.~M. Stuart, and J.~Voss.
\newblock Map estimators and their consistency in bayesian nonparametric
  inverse problems.
\newblock {\em Inverse Problems}, 29(9):095017, 2013.

\bibitem{Dashti2017}
M.~Dashti and A.~M. Stuart.
\newblock The {B}ayesian approach to inverse problems.
\newblock In R.~Ghanem, D.~Higdon, and H.~Owhadi, editors, {\em Handbook of
  Uncertainty Quantification}, pages 311--428, Cham, 2017. Springer
  International Publishing.

\bibitem{citekergp}
Y.~Deville, D.~Ginsbourger, and O.~R. C.~N. Durrande.
\newblock {\em kergp: {G}aussian Process Laboratory}, 2021.
\newblock R package version 0.5.5.

\bibitem{Duffy2015GreensFW}
D.~G. Duffy.
\newblock {\em {G}reen's functions with applications}.
\newblock Chapman and Hall/CRC, second edition edition, 2015.

\bibitem{Duistermaat2010}
J.~J. Duistermaat and J.~A.~C. Kolk.
\newblock {\em Distributions}, pages 33--44.
\newblock Birkh{\"a}user Boston, Boston, 2010.

\bibitem{Engquist1977AbsorbingBC}
B.~Engquist and A.~Majda.
\newblock Absorbing boundary conditions for the numerical simulation of waves.
\newblock {\em Math. Comp.}, 31(139):629--651, July 1977.

\bibitem{evans1998}
L.~Evans.
\newblock {\em Partial Differential Equations}.
\newblock Graduate studies in mathematics. American Mathematical Society, 1998.

\bibitem{evans2018measure}
L.~C. Evans and R.~F. Garzepy.
\newblock {\em Measure theory and fine properties of functions, Revised Edition
  (1st ed.)}.
\newblock Chapman and Hall/CRC, 2015.

\bibitem{trilateration_fang}
B.~T. Fang.
\newblock Trilateration and extension to global positioning system navigation.
\newblock {\em J. Guid. Control Dyn.}, 9(6):715--717, 1986.

\bibitem{fasshauer2007}
G.~Fasshauer.
\newblock Meshfree approximation methods with {MATLAB}.
\newblock In {\em Interdisciplinary Mathematical Sciences}, 2007.

\bibitem{fuselier2007refined}
E.~J. Fuselier~Jr.
\newblock {\em Refined error estimates for matrix-valued radial basis
  functions}.
\newblock PhD thesis, Texas A\&M University, 2007.

\bibitem{ginsbourger2016}
D.~Ginsbourger, O.~Roustant, and N.~Durrande.
\newblock On degeneracy and invariances of random fields paths with
  applications in {G}aussian process modelling.
\newblock {\em J. Statist. Plann. Inference}, page 170 :117 – 128, 2016.

\bibitem{graepel2003}
T.~Graepel.
\newblock {Solving Noisy Linear Operator Equations by {G}aussian Processes:
  Application to Ordinary and Partial Differential Equations}.
\newblock In {\em {Proc. 20th Int. Conf. Mach. Learn.}}, pages 234--241. {AAAI
  Press}, 2003.

\bibitem{grossmann2007numerical}
C.~Grossmann, H.-G. Roos, and M.~Stynes.
\newblock {\em Numerical treatment of partial differential equations}.
\newblock Springer, 2007.

\bibitem{gulian2022}
M.~Gulian, A.~Frankel, and L.~Swiler.
\newblock {G}aussian process regression constrained by boundary value problems.
\newblock {\em Comput. Methods Appl. Mech. Engrg.}, 388:114117, 2022.

\bibitem{hamilton1994}
J.~D. Hamilton.
\newblock {\em Time series analysis}.
\newblock Princeton University Press, Princeton, NJ, 1994.

\bibitem{hnr_bernoulli}
I.~Henderson, P.~Noble, and O.~Roustant.
\newblock Characterization of the second order random fields subject to linear
  distributional pde constraints.
\newblock {\em Bernoulli}, 29(4):3396--3422, 2023.

\bibitem{janson_1997}
S.~Janson.
\newblock {\em Gaussian {H}ilbert Spaces}.
\newblock Cambridge Tracts in Mathematics. Cambridge University Press, 1997.

\bibitem{Jidling2018ProbabilisticMA}
C.~Jidling, J.~Hendriks, N.~Wahlstrom, A.~Gregg, T.~Schon, C.~Wensrich, and
  A.~Wills.
\newblock Probabilistic modelling and reconstruction of strain.
\newblock {\em Nucl. Instrum. Methods Phys. Res. B: Beam Interact. Mater. At.},
  436:141--155, 2018.

\bibitem{jidling2017}
C.~Jidling, N.~Wahlstr\"{o}m, A.~Wills, and T.~B. Sch\"{o}n.
\newblock Linearly constrained {G}aussian processes.
\newblock In {\em Adv. Neural Inf. Process. Syst.}, volume~30. Curran
  Associates, Inc., 2017.

\bibitem{Kuchment2015}
P.~Kuchment and L.~Kunyansky.
\newblock Mathematics of photoacoustic and thermoacoustic tomography.
\newblock In {\em Handbook of mathematical methods in imaging. {V}ol. 1, 2, 3},
  pages 1117--1167. Springer, New York, 2015.

\bibitem{hegerman2018}
M.~Lange-Hegermann.
\newblock Algorithmic linearly constrained {G}aussian processes.
\newblock In {\em Adv. Neural Inf. Process. Syst.}, volume~31. Curran
  Associates, Inc., 2018.

\bibitem{hegermann2021LinearlyCG}
M.~Lange-Hegermann.
\newblock Linearly constrained {G}aussian processes with boundary conditions.
\newblock In {\em Proc. of The 24th Int. Conf. Artif. Intell. Stat.}, volume
  130 of {\em Proc. of Mach. Learn. Res.}, pages 1090--1098. PMLR, 13--15 Apr
  2021.

\bibitem{lannes2D}
D.~Lannes and P.~Bonneton.
\newblock Derivation of asymptotic two-dimensional time-dependent equations for
  surface water wave propagation.
\newblock {\em Phys. Fluids}, 21(1):016601, 2009.

\bibitem{LpezLopera2021PhysicallyInspiredGP}
A.~F. L{\'o}pez-Lopera, N.~Durrande, and M.~{\'A}lvarez.
\newblock Physically-inspired {G}aussian process models for
  post-transcriptional regulation in drosophila.
\newblock {\em IEEE/ACM Trans. Comput. Biol. Bioinform.}, 18:656--666, 2021.

\bibitem{alvarez2009}
M.~Álvarez, D.~Luengo, and N.~D. Lawrence.
\newblock Latent force models.
\newblock In {\em Proc. $12^{th}$ Int. Conf. Artif. Intell. Stat.}, volume~5 of
  {\em Proc. of Mach. Learn. Res.}, pages 9--16, Hilton Clearwater Beach
  Resort, Clearwater Beach, Florida USA, 16--18 Apr 2009. PMLR.

\bibitem{MATLAB:2020a}
The Mathworks, Inc., Natick, Massachusetts.
\newblock {\em {{MATLAB} version 9.8.0.1721703 (R2020a) Update 7}}, 2020.

\bibitem{mendes2012}
F.~M. Mendes and E.~A. da~Costa~Júnior.
\newblock {B}ayesian inference in the numerical solution of {L}aplace's
  equation.
\newblock {\em AIP Conf. Proc.}, 1443(1):72--79, 2012.

\bibitem{Narcowich1994GeneralizedHI}
F.~J. Narcowich and J.~Ward.
\newblock Generalized {H}ermite interpolation via matrix-valued conditionally
  positive definite functions.
\newblock {\em Math. Comp.}, 63:661--687, 1994.

\bibitem{owhadi_bayes_homog}
H.~Owhadi.
\newblock {B}ayesian numerical homogenization.
\newblock {\em Multiscale Model. Simul.}, 13(3):812--828, 2015.

\bibitem{alvarado2016}
M.~{\'A}. P.~A.~Alvarado and A.~Orozco.
\newblock A three spatial dimension wave latent force model for describing
  excitation sources and electric potentials produced by deep brain
  stimulation.
\newblock {\em arXiv}, 2016.

\bibitem{2007numerical_recipes}
W.~H. Press, S.~A. Teukolsky, W.~T. Vetterling, and B.~P. Flannery.
\newblock {\em Numerical recipes 3rd edition: The art of scientific computing}.
\newblock Cambridge university press, 2007.

\bibitem{Purisha_2019}
Z.~Purisha, C.~Jidling, N.~Wahlström, T.~B. Schön, and S.~Särkkä.
\newblock Probabilistic approach to limited-data computed tomography
  reconstruction.
\newblock {\em Inverse Problems}, 35(10):105004, sep 2019.

\bibitem{rasmussen_inducing}
J.~Qui\~{n}onero Candela and C.~E. Rasmussen.
\newblock A unifying view of sparse approximate {G}aussian process regression.
\newblock {\em J. Mach. Learn. Res.}, 6:1939--1959, 2005.

\bibitem{citeR}
{R Core Team}.
\newblock {\em R: A Language and Environment for Statistical Computing}.
\newblock R Foundation for Statistical Computing, Vienna, Austria, 2020.

\bibitem{raissi2017}
M.~Raissi, P.~Perdikaris, and G.~E. Karniadakis.
\newblock Machine learning of linear differential equations using {G}aussian
  processes.
\newblock {\em J. Comput. Phys.}, 348:683--693, 2017.

\bibitem{raissi2018numericalGP}
M.~Raissi, P.~Perdikaris, and G.~E. Karniadakis.
\newblock Numerical {G}aussian processes for time-dependent and nonlinear
  partial differential equations.
\newblock {\em SIAM J. Sci. Comput.}, 40(1):A172--A198, 2018.

\bibitem{gpml2006}
C.~E. Rasmussen and C.~Williams.
\newblock {\em {G}aussian Processes for Machine Learning}.
\newblock The MIT Press, 2006.

\bibitem{sarkka2011}
S.~S{\"a}rkk{\"a}.
\newblock Linear operators and stochastic partial differential equations in
  {G}aussian process regression.
\newblock In {\em Artificial Neural Networks and Machine Learning -- ICANN
  2011}, pages 151--158, Berlin, Heidelberg, 2011. Springer Berlin Heidelberg.

\bibitem{Srkk2019GaussianPL}
S.~S{\"a}rkk{\"a}, M.~{\'A}lvarez, and N.~Lawrence.
\newblock {G}aussian process latent force models for learning and stochastic
  control of physical systems.
\newblock {\em IEEE Trans. on Automat. Control}, 64:2953--2960, 2019.

\bibitem{Schaback2009SolvingTL}
R.~Schaback.
\newblock Solving the {L}aplace equation by meshless collocation using harmonic
  kernels.
\newblock {\em Adv. Comput. Math.}, 31:457--470, 2009.

\bibitem{scheuerer2012}
M.~Scheuerer and M.~Schlather.
\newblock Covariance models for divergence-free and curl-free random vector
  fields.
\newblock {\em Stoch. Models}, 28:433 -- 451, 2012.

\bibitem{pmlr-v89-solin19a}
A.~Solin and M.~Kok.
\newblock Know your boundaries: Constraining gaussian processes by variational
  harmonic features.
\newblock In {\em Proc. $22^{nd}$ Int. Conf. Artif. Intell. Stat.}, volume~89
  of {\em Proc. of Mach. Learn. Res.}, pages 2193--2202. PMLR, 16--18 Apr 2019.

\bibitem{stuart_2010}
A.~M. Stuart.
\newblock Inverse problems: A {B}ayesian perspective.
\newblock {\em Acta Numer.}, 19:451–559, 2010.

\bibitem{treves2006topological}
F.~Treves.
\newblock {\em Topological Vector Spaces, Distributions and Kernels}.
\newblock Dover books on mathematics. Dover Publications, 2006.

\bibitem{vergara2022general}
R.~C. Vergara, D.~Allard, and N.~Desassis.
\newblock A general framework for {SPDE}-based stationary random fields.
\newblock {\em Bernoulli}, 28(1):1--32, 2022.

\bibitem{wahlstrom2013}
N.~Wahlstrom, M.~Kok, T.~B. Sch{\"o}n, and F.~Gustafsson.
\newblock Modeling magnetic fields using {G}aussian processes.
\newblock {\em Proc. - ICASSP IEEE Int. Conf. Acoust. Speech Signal Process.},
  pages 3522--3526, 2013.

\bibitem{wendland2004scattered}
H.~Wendland.
\newblock {\em Scattered data approximation}, volume~17.
\newblock Cambridge university press, 2004.

\bibitem{xu2005universal}
M.~Xu and L.~V. Wang.
\newblock Universal back-projection algorithm for photoacoustic computed
  tomography.
\newblock {\em Phys. Rev. E}, 71(1):016706, 2005.

\end{thebibliography}

\appendix
\section{Appendix}
\subsection{Convolution and tensor product with measures}\label{sub:conv_measure}
This section follows \cite{hnr_bernoulli}, Section 2.2. Given a measure $\mu$ and a function $f$ over $\mathbb{R}^d$, their convolution $\mu*f$ is the following map (if well-defined):
\begin{align}
    (\mu*f)(x) = \int_{\mathbb{R}^d}f(x-y)\mu(dy).
\end{align}
If $\mu$ is an absolutely continuous measure whose density is some other function $g$ (i.e. $\mu(dy) = g(y)dy$), then $(\mu*f)(x)$ reduces to the usual function convolution $(g*f)(x)$.\\
If $\mu$ and $\nu$ are two measures defined over $\mathcal{D}_1 \subset \mathbb{R}^{d_1}$ and $\mathcal{D}_2  \subset \mathbb{R}^{d_2}$, their tensor product $\mu\otimes\nu$ (i.e. the product measure) is the measure over $\mathcal{D}_1\times\mathcal{D}_2$ characterized by the following property:
\begin{align}\label{def:tensor_prod_mes}
    \int_{\mathcal{D}_1\times\mathcal{D}_2} f(x,y)(\mu\otimes\nu)(dx,dy) = \int_{\mathcal{D}_2}\int_{\mathcal{D}_1} f(x,y)\mu(dx)\nu(dy),
\end{align}
for all continuous and compactly supported function $f$ \footnote{For this characterization to hold, $\mu$ and $\nu$ should be assumed Radon, see \cite{hnr_bernoulli} for further details.}. A more general measure theoretic definition of $\mu\otimes\nu$ exists, but it is really equation \eqref{def:tensor_prod_mes} that we will use.\\
Finally, details on the definition of tensor product and convolution with continuous linear forms over $C^m(\mathcal{D})$ spaces (which are necessary for the abstract definition of $\dot{F}_t*u_0$ and $(\dot{F}_t\otimes\dot{F}_{t'})*k_{\mathrm{u}}$) are given in \cite{hnr_bernoulli}, Section 2.2.
\subsection{Proofs}
\begin{proof}[Proof of Proposition \ref{prop: F_t * F_tp}]
$(i):$ Assume for simplicity that $c=1$. Using the definition of the convolution against the measure $F_t \otimes F_{t'}$ (see e.g. \cite{treves2006topological}, Exercise 26.1 p. 282),
\begin{align*}
[(F_t \otimes F_{t'}) * k_{\mathrm{v}}](x,x')&= 
\int_{\mathbb{R}^3 \times \mathbb{R}^3} k(x-s_1,x'-s_2)dF_t(s_1) dF_{t'}(s_2)\\
&= \int_{\mathbb{R}^3 \times \mathbb{R}^3} k_S(x-x'-s_1+s_2)dF_t(s_1) dF_{t'}(s_2).
\end{align*}
But $S$ is invariant under the change of variable $S \ni \gamma \longmapsto - \gamma$ and thus for any continuous function $f$, $\int_{\mathbb{R}^3} f(s_2)dF_{t'}(s_2) = \int_{\mathbb{R}^3} f(-s_2)dF_{t'}(s_2)$. This yields
\begin{align*}
[(F_t \otimes F_{t'}) * k_{\mathrm{v}}](x,x')= \int_{\mathbb{R}^3 \times \mathbb{R}^3} k_S(x-x'-s_1-s_2)dF_t(s_1) dF_{t'}(s_2).
\end{align*}
Applying the definition of the convolution of measures (see e.g. \cite{bogachev1998gaussian}, p. 101) to $F_t * F_{t'}$,
\begin{align*}
(F_t * F_{t'} * k_S)(h) &= \int_{\mathbb{R}^3} k_S(h-s)d(F_t * F_{t'})(s) \\
&= \int_{\mathbb{R}^3}\int_{\mathbb{R}^3} k_S(h-s_1-s_2)dF_t(s_1)dF_{t'}(s_2).
\end{align*}
Setting $h=x-x'$ finishes the proof of Point $(i)$. \\
$(ii):$ Without loss of generality we assume that $c=1$. The computation is carried out in the Fourier domain. Recall that $F_t$ and $\dot{F}_t$ are tempered distributions whose Fourier transforms are given by (\cite{Duistermaat2010}, equation (18.12) p. 294)
\begin{align}\label{eq:fourier of ft ftp}
    \mathcal{F}(F_t)(\xi) = \frac{\sin(ct|\xi|)}{c|\xi|} \ \ \ \text{and} \ \ \ \mathcal{F}(\Dot{F}_t)(\xi) = \cos(ct|\xi|).
\end{align}
We then obtain that (\cite{Duistermaat2010}, Theorem 14.33)
\begin{align}\label{eq:facto fourier}
     \mathcal{F}(F_t * F_{t'})(\xi) = \mathcal{F}(F_t)(\xi)\mathcal{F}(F_{t'})(\xi) = \frac{\sin(t|\xi|)\sin(t'|\xi|)}{|\xi|^2} = \frac{\cos(a|\xi|) - \cos(b|\xi|)}{2|\xi|^2}.
 \end{align}
 with $a = t-t', b = t+t$. We then compute the inverse Fourier transform of the quantity above. Let $h \in \mathbb{R}^3$. In spherical coordinates, noting the unit vectors $\gamma_h = {h}/{|h|}$ and $\gamma = {\xi}/{|\xi|} = {\xi}/{r}$, we define $f_a$ by
 \begin{align}
     f_a(h) &= \int_{\mathbb{R}^3}e^{i \langle h,\xi \rangle} \frac{\cos(a|\xi|)}{|\xi|^2}d\xi = 
    \int_0^{+\infty}\int_0^{2\pi} \int_0^{\pi}e^{i r\langle h,\gamma \rangle} \frac{\cos(ar)}{r^2}r^2\sin \theta d\theta d\phi dr \label{eq:simplify r2}\\
    &=\int_0^{+\infty} \cos(ar) \int_{S}e^{i r|h|\langle \gamma,\gamma_h \rangle}d\Omega dr. \nonumber
 \end{align}
Above, we used the spherical coordinate change $\xi = r\gamma, d\xi = r^2\sin \theta d\theta d\phi dr$. We now make use of radial symmetry in the interior integral, as follow. Note $e_3$ the third vector of the canonical basis of $\mathbb{R}^3$ and $M$ an orthogonal matrix such that $M\gamma_h = e_3$. We perform the change of variable $\gamma' = M\gamma$, using that $MS := \{M\gamma,\gamma\in S\} = S$ and that the corresponding Jacobian is equal to $1$:
 \begin{align}
     \int_{S}e^{i r|h|\langle \gamma,\gamma_h \rangle}d\Omega &= \int_{M S}e^{i r|h|\langle M^T\gamma', \gamma_h \rangle}d\Omega' \label{eq:change M} = \int_{S}e^{i r|h|\langle \gamma', M\gamma_h \rangle}d\Omega'\\
     &= \int_{S}e^{i r|h|\langle \gamma, e_3 \rangle}d\Omega = 2\pi \int_0^\pi e^{i r|h|\cos(\theta)}\sin(\theta)d\theta \nonumber \\
     &= 2\pi \left[-\frac{e^{i r|h|\cos(\theta)}}{ir|h|}\right]_0^\pi = 2\pi \frac{e^{ir|h|}-e^{-ir|h|}}{ir|h|} = 4\pi\frac{\sin(r|h|)}{r|h|}, \label{eq:integ_exact}
 \end{align}
 and thus
 \begin{align}\label{eq:sin a sin b}
     f_a(h) &= 4\pi \int_0^{\infty} \frac{\cos(ar)\sin(|h|r)}{r|h|}dr = 4\pi \int_0^{\infty} \frac{\sin((|h|+a)r) + \sin((|h|-a)r)}{2r|h|}dr \nonumber \\
     &= \frac{2\pi}{|h|}\int_0^{\infty}\frac{\sin(\alpha r)}{r} + \frac{\sin(\beta r)}{r}dr,
 \end{align}
 with $\alpha = |h|+a, \beta = |h|-a$. Finally, we have the Dirichlet integral
 \begin{align}\label{eq:dirichlet}
     \int_0^{\infty}\frac{\sin(\alpha r)}{r}dr = \sgn(\alpha)\frac{\pi}{2}.
 \end{align}
We define the function $f_b$ exactly as $f_a$, and compute it by replacing $a$ by $b$ in every step above. Putting \eqref{eq:facto fourier}, \eqref{eq:sin a sin b} and \eqref{eq:dirichlet} together, the inverse Fourier transform of $\mathcal{F}(F_t*F_{t'})$ is an absolutely continuous measure whose density $f$ is given by
 \begin{align}
     f(h) = \frac{1}{(2\pi)^3}\frac{1}{2}\big(f_a(h) - f_b(h)\big)&= \frac{1}{16\pi |h|}\Big(\sgn(|h| + t-t') + \sgn(|h| - t + t')\nonumber \\& - \sgn(|h| + t + t') - \sgn(|h| -t-t')\Big) \label{ft sign}\\ 
     &=: \frac{1}{16\pi |h|}K(|h|,t,t').\label{eq:def_K}
 \end{align}
$K(|h|,t,t')$ is defined in equation \eqref{eq:def_K}. Note that $K(|h|,-t,t') = -K(|h|,t,t')$ and likewise with $t'$, thus $K(|h|,t,t') = \sgn(t)\sgn(t')K(|h|,T,T')$ with $T = |t|, T' = |t'|$. Using the symmetries in $t$ and $t'$ in equation \eqref{ft sign} and the fact that $\sgn(s) = 1$ if $s > 0$, we obtain 
 \begin{align}
     K(|h|,T,T') &= \hspace{4pt} \sgn(|h| + |T-T'|) + \sgn(|h| - |T-T'|) \nonumber\\
     &\hspace{20pt}- \sgn(|h| + T + T') - \sgn(|h| -T-T') \nonumber\\
     &= 1 + \sgn(|h| - |T-T'|) - 1 - \sgn(|h| -T-T') \nonumber\\
     &= \sgn(|h| - |T-T'|) - \sgn(|h| -T-T'). \label{eq:kt_sign}
 \end{align}
From equation \eqref{eq:kt_sign}, one checks that $K(|h|,T,T') = 0 \text{  if  } |h| < |T-T'|$ or $|h| > T+T'$ and $ K(|h|,T,T') = 2 \text{  if  } |T-T'| < |h| < T + T'$. Thus, $K(|h|,T,T') = 2 \times \mathbbm{1}_{[|T-T'|, T + T']}(|h|)$.
 Identifying the measure $F_t*F_{t'}$ with its density, we obtain
 \begin{align}
     (F_t*F_{t'})(h) = \frac{\sgn(t)\sgn(t')}{8\pi|h|}\mathbbm{1}_{\Big[\big||t|-|t'|\big|, |t| + |t'|\Big]}(|h|),
 \end{align}
 which concludes the proof.
\end{proof}
\begin{proof}[Proof of Proposition \ref{prop: radial sym kernel}]
Without loss of generality, we assume that $c = 1$ and $x_0 = 0$.
We first derive expression \eqref{eq:ft ft' gen}. Let $f$ be a function defined on $\mathbb{R}_+$ and $g$ the function defined on $\mathbb{R}^3$ by $g(x) = f(|x|^2)$. Let $F$ be an antiderivative of $f$ and let $x\in\mathbb{R}^3$.
As in \eqref{eq:change M}, let $M$ be an orthogonal matrix such that $M(x/|x|) = e_3$ and use the change of variable $\gamma' = M\gamma$. As $M S = S$, we have
\begingroup
\allowdisplaybreaks
\begin{align}
    (F_t * g)(x) &= \frac{1}{4\pi t}\int_{S}g(x-t\gamma)t^2d\Omega = \frac{t}{4\pi} \int_{S}f(|x-t\gamma|^2)d\Omega \nonumber\\
    &=  \frac{t}{4\pi} \int_{S}f(|x|^2 + t^2 - 2|t|\langle x,\gamma \rangle )d\Omega \nonumber \\
    &= \frac{t}{4\pi} \int_{M S}f\bigg(|x|^2 + t^2 - 2|t||x|\bigg\langle \frac{x}{|x|},M^T\gamma' \bigg\rangle \bigg)d\Omega' \nonumber \\
    &=  \frac{t}{4\pi} \int_{S}f(|x|^2 + t^2 - 2|t||x|\langle e_3,\gamma \rangle )d\Omega \nonumber \\
    &= \frac{t}{4\pi} \int_{\phi = 0}^{2\pi}\int_{\theta = 0}^{\pi}f(|x|^2 + t^2 - 2|t||x|\cos(\theta))\sin(\theta)d\theta d\phi \nonumber \\
    &= \frac{t}{2} \int_{\theta = 0}^{\pi}f(|x|^2 + t^2 - 2|t||x|\cos(\theta))\sin(\theta)d\theta  \nonumber\\
    &= \frac{t}{4|x||t|}\Big[F(|x|^2 + t^2 - 2|t||x|\cos(\theta))\Big]_{\theta = 0}^{\theta = \pi} \nonumber \\
    &= \frac{\sgn(t)}{4|x|}\Big(F((|x|+|t|)^2) - F((|x|-|t|)^2) \Big)\nonumber \\
   &= \frac{\sgn(t)}{4|x|}\sum_{\varepsilon \in \{-1,1\}}\varepsilon F((|x|+\varepsilon|t|)^2). \label{eq:ft on f}
\end{align}
\endgroup
Introduce now the functions
\begin{align}
    &\tilde{k}(r,r'):= \int_0^{r'} k^0_{\mathrm{v}}(r,s)ds \hspace{5mm} \text{ and } \hspace{5mm} K_{\mathrm{v}}(r,r'):=  \int_0^{r}\int_0^{r'} k^0_{\mathrm{v}}(s,s')ds'ds. \label{def Kint}
\end{align}
We apply twice result \eqref{eq:ft on f} on $k_{\mathrm{v}}$: first by setting $g(x') = k^0_{\mathrm{v}}(|x-t\gamma|^2,|x'|^2)$ where $x-t\gamma$ is fixed, which integrates to $F(s) = \tilde{k}(|x-t\gamma|^2,s)$. Second, by setting $g(x) = \tilde{k}(|x|^2,(|x'|+\varepsilon|t'|)^2)$ where $|x'|+\varepsilon'|t'|$ is fixed, which integrates to $F(s) = K_{\mathrm{v}}(s,(|x'|+\varepsilon'|t'|)^2)$. In detail, we obtain
\begin{align} \label{eq:ft ft proof}
    [(F_t \otimes F_{t'})*k_{\mathrm{v}}](x,x')& = \frac{1}{4\pi t}\frac{1}{4 \pi t'} \int_{S} \int_{S} k^0_{\mathrm{v}}(|x-t\gamma|^2,|x'-t'\gamma'|^2){t'}^2d\Omega' t^2 d\Omega \nonumber\\
    & = \frac{1}{4 \pi t} \frac{\sgn(t')}{4|x'|} \int_{S} \sum_{\varepsilon' \in \{-1,1\}}\varepsilon' \tilde{k}(|x-t\gamma|^2,(|x'|+\varepsilon'|t'|)^2) t^2 d\Omega \nonumber\\
     & = \frac{\sgn(tt')}{16 r r'} \sum_{\varepsilon,\varepsilon' \in \{-1,1\}}\varepsilon \varepsilon' K_{\mathrm{v}}\big( (r + \varepsilon |t|)^2,(r' + \varepsilon' |t'|)^2\big),
\end{align}
which is exactly equation \eqref{eq:ft ft' gen}. By replacing $k_{\mathrm{v}}^0$ with $k_{\mathrm{u}}^0$, we can then use this result to compute 
\begin{align}
[(\Dot{F}_t \otimes \Dot{F}_{t'})*k_{\mathrm{u}}](x,x') = \partial_t \partial_{t'} [(F_t \otimes F_{t'})*k_{\mathrm{u}}](x,x').
\end{align}
First, we compute it for $t \neq 0$ and $t' \neq 0$ by differentiating \eqref{eq:ft ft proof} with reference to $t$ and $t'$, using that for $t \neq 0$, $d|t|/dt = \sgn(t)$ and $d \sgn(t)/dt = 0$. This yields
\begin{align} \label{eq:ftp ftp}
    [(\Dot{F}_t \otimes& \Dot{F}_{t'})*k_{\mathrm{u}}](x,x') \nonumber \\
    &= \frac{1}{4rr'}\sum_{\varepsilon,\varepsilon' \in \{-1,1\}}(r+\varepsilon |t|)(r'+\varepsilon' |t'|)k_{\mathrm{u}}^0\big((r + \varepsilon |t|)^2,(r' + \varepsilon' |t'|)^2\big).
\end{align}
For the case where either $t = 0$ or $t' = 0$, note first from equation \eqref{eq:fourier of ft ftp} that $\mathcal{F}(\dot{F}_0)(\xi) = 1$ and thus $\dot{F}_0 = \delta_0$, the Dirac mass at $0$, which is the neutral element for the convolution. Therefore, when we have both $t = 0$ and $t' = 0$:
\begin{align*}
[(\Dot{F}_0 \otimes \Dot{F}_{0})*k_{\mathrm{u}}](x,x') = [(\delta_0 \otimes \delta_{0})*k_{\mathrm{u}}](x,x') = [\delta_{(0,0)}*k_{\mathrm{u}}](x,x') = k_{\mathrm{u}}(x,x'),
\end{align*}
which is also the result provided by \eqref{eq:ftp ftp} evaluated at $t = t' = 0$. When $t'=0$ and $t \neq 0$, we still have $d|t|/dt = \sgn(t)$ and $d\sgn(t)/dt = 0$, yielding
\begin{align*}
[(\Dot{F}_t \otimes \Dot{F}_{0})*k_{\mathrm{u}}](x,x') &= [(\Dot{F}_t \otimes \delta_{0})*k_{\mathrm{u}}](x,x') = \partial_t [(F_t \otimes \delta_{0})*k_{\mathrm{u}}](x,x') \\
 &= \partial_t \int_{\mathbb{R}^3}\int_{\mathbb{R}^3}k_{\mathrm{u}}^0(|x-y|^2,|x'-y'|^2)F_t(dy)\delta_0(dy') \\
 &= \partial_t \frac{1}{4\pi t}\int_{S} k_{\mathrm{u}}^0(|x-t\gamma|^2,|x'|^2)t^2d\Omega \\
 &= \partial_t \frac{\sgn(t)}{4r} \sum_{\varepsilon \in \{-1,1\}}\varepsilon \tilde{k}((r+\varepsilon |t|)^2,|x'|^2) \\
 &= \frac{1}{2r}\sum_{\varepsilon \in \{-1,1\}}(r+\varepsilon |t|) k_{\mathrm{u}}^0((r+\varepsilon |t|)^2,|x'|^2).
\end{align*}
which is also the result provided by \eqref{eq:ftp ftp} evaluated at $t' = 0$. The same arguments apply to show that expression \eqref{eq:ftp ftp} is valid when $t = 0$ and $t' \neq 0$. Therefore the expression \eqref{eq:ftp ftp} is valid whatever the value of $t,t' \in \mathbb{R}$.
\end{proof}
\begin{proof}[Proof of Proposition \ref{prop: radial sym kernel compact}]
When using the kernel $k_{\mathrm{v}}^{0,R_{\mathrm{v}}}$, we can directly use equation \eqref{eq:ft ft' gen} by substituting $K_{\mathrm{v}}$ with $K_{\mathrm{v}}^{R_{\mathrm{v}}}(r,r'):= \int_0^{r} \int_0^{r'}k_{\mathrm{v}}^{0,R_{\mathrm{v}}}(s,s')dsds'$ and observing that for all $r,r' \geq 0$,
\begin{align*}
K_{\mathrm{v}}^{R_{\mathrm{v}}}(r^2,r'^2):= \int_0^{r^2} \int_0^{r'^2}k_{\mathrm{v}}^{0,R_{\mathrm{v}}}(s,s')dsds' = K_{\mathrm{v}}\Big(\min\big( r^2,R_{\mathrm{v}}^2\big),\min\big(r'^2,R_{\mathrm{v}}^2\big)\Big)
\end{align*}
which directly proves \eqref{eq:ft ft' compact}. Additionally, \eqref{eq:ftp ft'p compact} is only a substitution of $k_{\mathrm{u}}^0$ with $k_{\mathrm{u}}^{0,R_{\mathrm{u}}}$ in \eqref{eq:ftp ft'p gen}: all the mathematical steps are justified as $\varphi \in C^{1}(\mathbb{R}_+)$.
\end{proof}
\begin{proof}[Proof of Proposition \ref{prop: point source}] The proof is carried out by direct computations. First, equation \eqref{eq:kirschoff} yields
\begin{align}\label{eq:piecewise}
[(F_t \otimes F_{t'}) * k_{x_0}^{\mathrm{R}}](x,x') = tt'\int_{S \times S}k_{x_0}^{\mathrm{R}}(x-c|t|\gamma,x'-c|t'|\gamma')\frac{d\Omega d\Omega'}{(4\pi)^2}.
\end{align}
The integrated function in equation \eqref{eq:piecewise} is piecewise continuous over $\mathbb{R}^3 \times \mathbb{R}^3$ and the integral in \eqref{eq:piecewise} is well defined, whatever the values of $x$ and $x'$. Let $f$ be a continuous compactly supported function on $\mathbb{R}^3 \times \mathbb{R}^3$. We define
\begin{align*}
I_R:&= \langle (F_t \otimes F_{t'}) * k_{x_0}^{\mathrm{R}},f \rangle/({4}\pi R^3/3)^2,
\end{align*}
and wish to show that $I_R \rightarrow k(x_0,x_0)\langle \tau_{x_0}F_t \otimes \tau_{x_0}F_{t'},f \rangle$ when $R \rightarrow 0$. Using equation $(52)$ from \cite{hnr_bernoulli} and Fubini's theorem, we have
\begin{align*}
I_R &= \frac{1}{(\frac{4}{3}\pi R^3)^2}\ \int_{\mathbb{R}^3 \times \mathbb{R}^3} f(x,x')[(F_t \otimes F_{t'}) * k_{x_0}^{\mathrm{R}}](x,x')dxdx' \\
&= \frac{1}{(\frac{4}{3}\pi R^3)^2} \int_{\mathbb{R}^3 \times \mathbb{R}^3} f(x,x')tt'\int_{S \times S}k_{x_0}^{\mathrm{R}}(x-c|t|\gamma,x'-c|t'|\gamma')\frac{d\Omega d\Omega'}{(4\pi)^2} dxdx' \\
&= \frac{1}{(\frac{4}{3}\pi R^3)^2}\ tt'\int_{S \times S}\int_{\mathbb{R}^3 \times \mathbb{R}^3} \Bigg( f(x,x')k_{x_0}(x-c|t|\gamma,x'-c|t'|\gamma')\\
& \hspace{2cm} \times \mathbbm{1}_{[0,R]}(|x-c|t|\gamma - x_0|)\mathbbm{1}_{[0,R]}(|x'-c|t'|\gamma' - x_0|) \Bigg) dxdx'\frac{d\Omega d\Omega'}{(4\pi)^2}.
\end{align*}
The first indicator function restricts the integration domain of $x$ to $B(x_0 + c|t|\gamma,R)$, and symmetrically for the second indicator function and $x'$.
For $x$ in $B(x_0 + c|t|\gamma,R)$, in spherical coordinates around $x_0 + c|t|\gamma$, write $x = x_0 + c|t|\gamma + R \rho \gamma_x$ with $\rho \in [0,1]$, $\gamma_x \in S$ and associated surface differential element $d\Omega_x$. We do symmetrically for $x' \in B(x_0 + c|t'|\gamma',R)$, which yields
\begin{align*}
I_R &= tt'\int_{S \times S} \int_{S \times S} \int_0^1 \int_0^1 \Bigg(f(x_0 + c|t|\gamma + R \rho \gamma_x,x_0 + c|t'|\gamma' + R \rho' \gamma_{x'}) \\
& \hspace{3cm} \times  k(x_0 + R \rho \gamma_x,x_0 + R \rho' \gamma_{x'}) \Bigg) \times 9 \rho^2 d\rho \rho'^2 d\rho' \frac{d\Omega_x d\Omega_{x'}}{(4\pi)^2}\frac{d\Omega d\Omega'}{(4\pi)^2}.
\end{align*}
The integration domain above is a compact subset of $\mathbb{R}^{10}$. Since $f$ is continuous and $k$ is assumed continuous in the vicinity of $(x_0,x_0)$, Lebesgue's dominated convergence theorem can be applied when $R \rightarrow 0$, which yields
\begin{align*}
I_R \xrightarrow[R \rightarrow 0]{} &tt' k(x_0,x_0) \int_{S \times S} f(x_0 + c|t|\gamma,x_0 + c|t'|\gamma')\frac{d\Omega d\Omega'}{(4\pi)^2} \times \bigg(3\int_0^1\rho^2d\rho\bigg)^2 \\
& = k(x_0,x_0)\langle \tau_{x_0}F_t \otimes \tau_{x_0}F_{t'},f \rangle.
\end{align*}
which concludes the proof.
\end{proof} 
\begin{proof}[Proof of Proposition \ref{prop: log lik reg}]
Suppose first that $||F_{x_0}||_{\mathbb{R}^n}^2 = 0$. Then by definition, $r(x_0) = 0$ and $\mathcal{L}_{\mathrm{reg}}(x_0,\lambda) =||W||_{\mathbb{R}^n}^2/\lambda + n \log \lambda$ which indeed shows that 
\begin{align}\label{eq:pointwise}
\big|\lambda\mathcal{L}_{\mathrm{reg}}(x_0,\lambda) - ||W||_{\mathbb{R}^n}^2 \big| = O_{\lambda \rightarrow 0}(\lambda \log \lambda).
\end{align}
Now, let $\varepsilon > 0$ and assume that $||F_{x_0}||_{\mathbb{R}^n}^2 \geq \varepsilon$. 
We first deal with the first term in equation \eqref{eq:log lik reg}. Using the Sherman–Morrison formula (\cite{2007numerical_recipes}, Section 2.7.1), we may invert $(K_{x_0}^{\mathrm{reg}} + \lambda I_n)$ explicitly:
\begin{align*}
	(K_{x_0}^{\mathrm{reg}} + \lambda I_n)^{-1} = \frac{1}{\lambda}I_n - \frac{1}{\lambda^2} \frac{F_{x_0}F_{x_0}^T}{1+ \frac{1}{\lambda}F_{x_0}^TF_{x_0}} = \frac{1}{\lambda}\Big(I_n - \frac{F_{x_0}F_{x_0}^T}{\lambda + ||F_{x_0}||_{\mathbb{R}^n}^2}\Big).
\end{align*}
The determinant term in equation \eqref{eq:log lik reg} is also easily derived. Indeed, $F_{x_0}F_{x_0}^T$ has only one non zero eigenvalue equal to $||F_{x_0}||_{\mathbb{R}^n}^2$, since $(F_{x_0}F_{x_0}^T)F_{x_0} = F_{x_0}(F_{x_0}^TF_{x_0}) = ||F_{x_0}||_{\mathbb{R}^n}^2F_{x_0}$:
\begin{align}
\log \det(K_{x_0}^{\mathrm{reg}} + \lambda I_n) = (n-1) \log \lambda + \log (\lambda + ||F_{x_0}||_{\mathbb{R}^n}^2).
\end{align} 
(The same argument shows that $\rho(K_{x_0}^{\mathrm{reg}}) = ||F_{x_0}||_{\mathbb{R}^n}^2$.) Thus,
\begin{align*}
\mathcal{L}_{\mathrm{reg}}(x_0,\lambda) &= W^T (K_{x_0}^{\mathrm{reg}} + \lambda I_n)^{-1} W + \log \det(K_{x_0}^{\mathrm{reg}} + \lambda I_n) \nonumber \\
&= \frac{1}{\lambda}\bigg(||W||_{\mathbb{R}^n}^2 - \frac{\langle F_{x_0},W\rangle_{\mathbb{R}^n}^2}{\lambda + ||F_{x_0}||_{\mathbb{R}^n}^2}\bigg) + (n-1) \log \lambda + \log (\lambda + ||F_{x_0}||_{\mathbb{R}^n}^2) \nonumber \\
&= \frac{||W||_{\mathbb{R}^n}^2}{\lambda}\bigg(1 - \frac{\langle F_{x_0},W\rangle_{\mathbb{R}^n}^2}{||W||_{\mathbb{R}^n}^2(\lambda + ||F_{x_0}||_{\mathbb{R}^n}^2)}\bigg) + (n-1) \log \lambda + \log (\lambda + ||F_{x_0}||_{\mathbb{R}^n}^2).
\end{align*}
Therefore,
\begin{align}
\label{eq:log reg diff}
\lambda\mathcal{L}_{\mathrm{reg}}(x_0,\lambda)&-||W||_{\mathbb{R}^n}^2(1-r(x_0)^2) \nonumber \\
&= ||W||_{\mathbb{R}^n}^2\bigg(\frac{\langle F_{x_0},W \rangle_{\mathbb{R}^n}^2}{||W||_{\mathbb{R}^n}^2 ||F_{x_0}||_{\mathbb{R}^n}^2} - \frac{\langle F_{x_0},W \rangle_{\mathbb{R}^n}^2}{||W||_{\mathbb{R}^n}^2 (\lambda + ||F_{x_0}||_{\mathbb{R}^n}^2)} \bigg) \\ &\hspace{15pt}+ (n-1)\lambda \log \lambda + \lambda \log (\lambda + ||F_{x_0}||_{\mathbb{R}^n}^2). \nonumber
\end{align}
Moreover, for the term in equation \eqref{eq:log reg diff} which is multiplied by $||W||_{\mathbb{R}^n}^2$,
\begin{align}\label{eq:bound 1 log reg}
\frac{\langle F_{x_0},W \rangle_{\mathbb{R}^n}^2}{||W||_{\mathbb{R}^n}^2 ||F_{x_0}||_{\mathbb{R}^n}^2} - \frac{\langle F_{x_0},W \rangle_{\mathbb{R}^n}^2}{||W||_{\mathbb{R}^n}^2 (\lambda + ||F_{x_0}||_{\mathbb{R}^n}^2)} &= \frac{\langle F_{x_0},W \rangle_{\mathbb{R}^n}^2}{||W||_{\mathbb{R}^n}^2}\bigg(\frac{1}{||F_{x_0}||_{\mathbb{R}^n}^2} -\frac{1}{\lambda + ||F_{x_0}||_{\mathbb{R}^n}^2}  \bigg) \nonumber \\
=& \frac{\langle F_{x_0},W \rangle_{\mathbb{R}^n}^2}{||W||_{\mathbb{R}^n}^2} \frac{\lambda}{||F_{x_0}||_{\mathbb{R}^n}^2(\lambda + ||F_{x_0}||_{\mathbb{R}^n}^2)} \nonumber \\
\leq& r(x_0)^2  \frac{\lambda}{\lambda + ||F_{x_0}||_{\mathbb{R}^n}^2} \leq \frac{\lambda}{||F_{x_0}||_{\mathbb{R}^n}^2} \leq \frac{\lambda}{\varepsilon},
\end{align}
and obviously, since $\lambda \geq 0$, 
\begin{align}\label{eq:truc_positif}
\frac{\langle F_{x_0},W \rangle_{\mathbb{R}^n}^2}{||W||_{\mathbb{R}^n}^2 ||F_{x_0}||_{\mathbb{R}^n}^2} - \frac{\langle F_{x_0},W \rangle_{\mathbb{R}^n}^2}{||W||_{\mathbb{R}^n}^2 (\lambda + ||F_{x_0}||_{\mathbb{R}^n}^2)} \geq 0.
\end{align}
Also, one sees that $F_{x_0} = 0$ as soon as $\sup_i |x_0-x_i| > cT + R$, ie $x_0$ is too far from the receivers for them to capture non zero signal during the time interval $[0,T]$. Thus the function $x_0 \longmapsto ||F_{x_0}||_{\mathbb{R}^n}^2$ is zero outside of a compact set. It is obviously continuous on $\mathbb{R}^3$ and is thus bounded on $\mathbb{R}^3$ by some constant $M > 0$. Using this together with equations \eqref{eq:bound 1 log reg} and \eqref{eq:truc_positif} inside equation \eqref{eq:log reg diff}, and assuming that $\lambda \leq 1$ yields
\begin{align*}
\big|\lambda\mathcal{L}_{\mathrm{reg}}(x_0,\lambda)-||W||_{\mathbb{R}^n}^2(1-r(x_0)^2)\big| \leq \frac{\lambda}{\varepsilon}||W||_{\mathbb{R}^n}^2 + (n-1)|\lambda \log \lambda| + \lambda \log (M+1),
\end{align*}
which shows the uniform convergence statement as well as the pointwise one (together with \eqref{eq:pointwise}).
\end{proof}
\begin{proof}[Proof of Proposition \ref{prop: log lik reg N}]
In all concerned mathematical objects, we highlight the $N$ dependency with an exponent, i.e. $W^N$, $F_{x_0}^N$, etc.
We use the exact same tools as in the previous proof, namely that we the following equality holds:
\begin{align*}
\mathcal{L}_{\mathrm{reg}}^N(x_0,\lambda) &= \frac{||W^N||_{\mathbb{R}^n}^2}{\lambda}\bigg(1 - \frac{\langle F_{x_0}^N,W^N\rangle_{\mathbb{R}^n}^2}{||W^N||_{\mathbb{R}^n}^2\big(\lambda + ||F_{x_0}^N||_{\mathbb{R}^n}^2\big)}\bigg) \\
&\hspace{20pt} + (n-1) \log \lambda + \log (\lambda + ||F_{x_0}^N||_{\mathbb{R}^n}^2).
\end{align*}
But we also have $||W^N||_{\mathbb{R}^n}^2 = \sum_{i=1}^q \sum_{k=1}^N \tilde{w}(x_i,t_k)^2, ||F_{x_0}^N||_{\mathbb{R}^n}^2 = \sum_{i=1}^q \sum_{k=1}^N f_{t_k}^{\mathrm{R}}(x_i-x_0)^2$ and $\langle F_{x_0}^N,W^N\rangle_{\mathbb{R}^n} = \sum_{i=1}^q \sum_{k=1}^N f_{t_k}^{\mathrm{R}}(x_i-x_0)\tilde{w}(x_i,t_k)$.
Since the time steps are equally spaced, we can study the limit $N \rightarrow \infty$ of the above objects thanks to Riemann sums. When $N \rightarrow \infty$,
\begin{align}
\frac{1}{N}||W^N||_{\mathbb{R}^n}^2 &\longrightarrow \sum_{i=1}^q\int_0^T \tilde{w}(x_i,t)^2dt = ||I_{\mathrm{w}}||_{L^2}^2, \\
\frac{1}{N}||F_{x_0}^N||_{\mathbb{R}^n}^2 &\longrightarrow \sum_{i=1}^q\int_0^T f_t(x_i-x_0)^2dt = ||I_{x_0}||_{L^2}^2, \label{eq:I_x0_lim} \\
\frac{1}{N}
\langle W^N,F_{x_0}^N \rangle_{\mathbb{R}^n} &\longrightarrow \sum_{i=1}^q\int_0^T \tilde{w}(x_i,t) f_t(x_i-x_0)dt = \langle I_{\mathrm{w}},I_{x_0} \rangle_{L^2}.
\end{align}
Assume that $x_0$ is such that $||I_{x_0}||_{L^2} \neq 0$, then because of equation \eqref{eq:I_x0_lim}, the quantity $||F_{x_0}^N||_{\mathbb{R}^n}$ is bounded from below by a constant $C > 0$ for sufficiently large $N$ (say $C = ||I_{x_0}||_{L^2}/2$). From the three equations above, we then have the following convergence:
\begin{align}\label{eq:cv_riemann}
\frac{\langle F_{x_0}^N,W^N\rangle_{\mathbb{R}^n}^2}{||W^N||_{\mathbb{R}^n}^2(\lambda + ||F_{x_0}^N||_{\mathbb{R}^n}^2)} = \frac{(\frac{1}{N}\langle F_{x_0}^N,W^N\rangle_{\mathbb{R}^n})^2}{\frac{1}{N}||W^N||_{\mathbb{R}^n}^2(\frac{\lambda}{N} + \frac{1}{N} ||F_{x_0}^N||_{\mathbb{R}^n}^2)}  \xrightarrow[N \rightarrow \infty]{} r_{\infty}(x_0).
\end{align}
Likewise, since $n = qN$, when $N \rightarrow \infty$ we have that
\begin{align*}
\frac{(n-1)\log \lambda}{N} &+ \frac{1}{N}\log (\lambda + ||F_{x_0}||_{\mathbb{R}^n}^2) \nonumber \\
&= \frac{(Nq-1)\log \lambda}{N} + \frac{\log N}{N} + \frac{1}{N}\log \Big(\frac{\lambda}{N} + \frac{1}{N}||F_{x_0}||_{\mathbb{R}^n}^2\Big)  \xrightarrow[N \rightarrow \infty]{} q \log \lambda.
\end{align*}
which, together with equation \eqref{eq:cv_riemann}, shows the announced result.
\end{proof}
\begin{proof}[Proof of Proposition \ref{prop: Lp control}]
We have $(F_t*v_0)(x) = t\int_{S}v_0(x-c|t|\gamma)d\Omega/4\pi$, where 
$d\Omega/4\pi$ is the normalized Lebesgue measure on the unit sphere $S$. Assume first that $p\in [1,+\infty [$. Jensen's inequality on the function $t \longmapsto |t|^p$ yields
\begin{align}
||F_t * v_0||_p^p &= t^p \int_{\mathbb{R}^3}|(F_t*v_0)(x)|^p dx = |t|^p\int_{\mathbb{R}^3}\bigg|\int_{S}v_0(x-c|t|\gamma)\frac{d\Omega}{4\pi}\bigg|^p dx \nonumber\\
 &\leq |t|^p\int_{\mathbb{R}^3} \int_{S}|v_0(x-c|t|\gamma)|^p \frac{d\Omega}{4\pi} dx = |t|^p \int_{S} \int_{\mathbb{R}^3} |v_0(x-c|t|\gamma)|^p dx \frac{d\Omega}{4\pi} \nonumber\\
 &\leq \int_{S} ||v_0||_p^p \frac{d\Omega}{4\pi} = |t|^p||v_0||_p^p \label{eq:lp_control_first_estimate_proof},
\end{align}
which yields equation \eqref{eq:estim Lp v_0}. Next,
\begin{align*}
(\dot{F}_t *u_0)(x) &= \partial_t (F_t * u_0)(x) = \partial_t \Bigg( t\int_{S}u_0(x-c|t|\gamma)\frac{d\Omega}{4\pi} \Bigg) \\
&= \int_{S}u_0(x-c|t|\gamma)\frac{d\Omega}{4\pi} + t\int_{S}- c \gamma \cdot \nabla u_0(x-c|t|\gamma)\frac{d\Omega}{4\pi}
&=: I_1(x) + I_2(x).
\end{align*}
The functions $I_1$ and $I_2$ are defined in the equation above. We have $||\dot{F}_t *u_0||_p = ||I_1 + I_2||_p \leq ||I_1||_p + ||I_2||_p$.
As in \eqref{eq:lp_control_first_estimate_proof}, $||I_1||_p \leq ||u_0||_p$. From Jensen's inequality,
\begin{align*}
||I_2||_p^p = |ct|^p\int_{\mathbb{R}^3} \Bigg|\int_{S} \gamma \cdot \nabla u_0(x-c|t|\gamma)\frac{d\Omega}{4\pi} \Bigg|^p dx \leq |ct|^p \int_{\mathbb{R}^3}\int_{S} |\gamma \cdot \nabla u_0(x-c|t|\gamma)|^p \frac{d\Omega}{4\pi}dx.
\end{align*}
Next, we use Hölder's inequality in $\mathbb{R}^3$: $|\gamma \cdot \nabla u_0| \leq |\nabla u_0|_p \times |\gamma|_q$ with ${1}/{p} + {1}/{q} = 1$, where $|v|_p = (|v_1|^p + |v_2|^p + |v_3|^p)^{1/p}$  and likewise for $|v|_q$. Thus,
\begin{align*}
||I_2||_p^p &\leq c^p |t|^p \int_{\mathbb{R}^3}\int_{S} |\nabla u_0(x-c|t|\gamma)|_p^p \times |\gamma|_q^p \frac{d\Omega}{4\pi} dx \\
&\leq c^p |t|^p\int_{S}|\gamma|_q^p \int_{\mathbb{R}^3} |\nabla u_0(x-c|t|\gamma)|_p^p dx \frac{d\Omega}{4\pi} = c^p |t|^p\Bigg(\int_{S}|\gamma|_q^p \frac{d\Omega}{4\pi}\Bigg) ||\nabla u_0||_p^p.
\end{align*}
which yields equation \eqref{eq:estim Lp v_0}. Finally, the case $p = +\infty$ is trivial.
Equation \eqref{eq:diff w m tilde} is then the result of equations \eqref{eq:estim Lp v_0} and  \eqref{eq:estim Lp u_0} applied to the function 
\begin{align*}
w(x,t) - \tilde{m}(x,t) = [F_t *(v_0 - \tilde{v}_0)](x) + [\dot{F}_t *(u_0 - \tilde{u}_0)](x) .
\end{align*}
This finishes the proof.
\end{proof}
\end{document}